\documentclass[12pt]{amsart}
 
\usepackage{mathtools}
\mathtoolsset{showonlyrefs,showmanualtags}
 
\usepackage{amssymb, amsmath, amsthm, esint}
\usepackage{mathrsfs}
\usepackage{bbm, dsfont}
\usepackage{color}

\usepackage[top=1.5in, bottom=1.5in, left=1.25in, right=1.25in]{geometry}
\usepackage[all]{xy}
\usepackage{amscd}
 
\usepackage[bottom]{footmisc}
 
\usepackage{fancyhdr}
 
\AtBeginDocument{\setlength{\footskip}{30pt}}
\fancypagestyle{plain}{%
  \fancyhf{}%
  \fancyfoot[C]{\thepage}%
}
 
\fancypagestyle{myfancy}{%
  \fancyhf{}%
  \fancyhead[CE]{\scshape A Variation Norm Carleson Theorem Along the Primes}%
  \fancyhead[CO]{A. Fragkos, B. Krause, N. Miheisi, and Y.-C. Sun}%
  \fancyhead[LE,RO]{\thepage}%
}
 
\numberwithin{equation}{section}

\newtheorem{theorem}{Theorem}[section]
\newtheorem*{theorem*}{Theorem}

\newtheorem{proposition}[theorem]{Proposition}
\newtheorem*{proposition*}{Proposition}
\newtheorem{cor}[theorem]{Corollary}
\newtheorem{lemma}[theorem]{Lemma}

\theoremstyle{definition}
\newtheorem{problem}{Problem}
 
\usepackage[
  pagebackref,
  colorlinks=true,
  linkcolor=blue,
  citecolor=magenta,
  filecolor=magenta,
  urlcolor=cyan
]{hyperref}

\makeatletter
\renewcommand*{\backref}[1]{}
\renewcommand*{\backrefalt}[4]{%
  \ifcase #1\relax
  \or
    \space (Page~#2)%
  \else
    \space (Pages~#2)%
  \fi
}
\makeatother

\begin{document}
 
\title{A Variation Norm Carleson Theorem Along the Primes}
 

\author{Anastasios Fragkos}
\address{
Department of Mathematics,
Georgia Tech \\
686 Cherry Street, Atlanta, GA 30332-0160 USA}
\email{anastasiosfragkos@gatech.edu}

\author{Ben Krause}
\address{
Department of Mathematics,
University of Bristol \\
Beacon House, Queens Rd, Bristol BS8 1QU}
\email{ben.krause@bristol.ac.uk}

\author{Nazar Miheisi}
\address{
Department of Mathematics,
King's College London \\
Strand Building, Strand Campus, Strand, London, WC2R 2LS}
\email{naz.miheisi@gmail.com}

\author{Yu-Chen Sun}
\address{
Department of Mathematics,
University of Bristol \\
Beacon House, Queens Rd, Bristol BS8 1QU}
\email{yuchensun93@163.com}
 
\date{\today}
 
\begin{abstract}
Let $\Lambda$ denote the von Mangoldt function; we prove that for each $r > 2$, there exist constants
\[ r' < \mathbf{c}(r) < 2 < \mathbf{C}(r), \qquad \lim_{r \to \infty} \mathbf{c}(r) = 1, \ \lim_{r \to \infty} \mathbf{C}(r) = \infty \]
so that the discrete variational Carleson operator along the primes
\begin{align}
\mathcal{V}^r \Big( \sum_{n \neq 0} f(x-n) \Lambda(|n|)  \frac{e^{2\pi i \lambda n}}{n} : \lambda \in \mathbb{T} \Big)
\end{align}
is bounded on $\ell^p$ for all $\mathbf{c}(r)  < p < \mathbf{C}(r)$, while the variation is unbounded when $p \leq r'$.
At the non-variational endpoint, the same argument gives
the sharp maximal result: the prime Carleson operator
\[
\sup_{\lambda\in\mathbb T}
\Big|\sum_{n\neq0} f(x-n)\Lambda(|n|)\frac{e^{2\pi i\lambda n}}{n}\Big|
\]
is bounded on \(\ell^p(\mathbb Z)\) for the full expected range \(1<p<\infty\).

The proof gives a new mechanism for treating modulation-invariant singular
integrals after arithmetic sparsification.  It combines higher-order Fourier
uniformity, a variable-coefficient multi-frequency principle in the spirit of
Bourgain, and an additive-combinatorial inverse argument.  A key step is a
reduction to finite periodic models, where the Ramanujan structure of the major
arcs is converted into a sharp estimate for structured atoms by elementary
number theory.
\end{abstract}
 
\maketitle
 
\setcounter{footnote}{0} 
 
\pagestyle{myfancy}
 
\setcounter{tocdepth}{1}
\tableofcontents

\section{Introduction}

The study of maximally modulated singular integrals has a rich history, encompassing arguably the most celebrated theorem of twentieth century Fourier analysis: Carleson's Theorem \cite{C}. We recall the principal estimate in the real-variable setting.
\begin{theorem}[Carleson's Theorem, Real-Variable Formulation]\label{t:CL2}
    The following operator is weakly bounded on $L^2(\mathbb{R})$:
    \begin{align}\label{e:CL2}
        Cf(x):= \sup_{\lambda \in \mathbb{R}} \big| \int f(x-t) \cdot  e^{2 \pi i \lambda t} \ \frac{dt}{t}\big|.
    \end{align}
\end{theorem}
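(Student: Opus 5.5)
Carleson's theorem is classical and is only recalled here, so the natural plan is to follow the time-frequency analysis of Lacey and Thiele rather than Carleson's original argument. First, by duality and the standard linearization, it suffices to bound
\[
\Big|\int C_{N} f \cdot \mathbf 1_E\Big| \lesssim |F|^{1/2}|E|^{1/2}
\]
for every measurable $N:\mathbb{R}\to\mathbb{R}$, every $|f|\le \mathbf 1_F$, and every set $E$ of finite measure. Here $C_Nf(x)$ is the modulated Hilbert transform with $\lambda = N(x)$. This restricted weak-type estimate gives weak $L^2$ boundedness by the usual interpolation-free argument for restricted weak type at $p=2$. Since the operator commutes with translations, dilations and modulations, it is enough to work with a model operator. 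Decompose $1/t$ smoothly into dilates $\sum_k \psi_k(t)$ with $\widehat{\psi_k}$ supported on an annulus $|\xi|\sim 2^{-k}$. Discretize in space and frequency using tiles $P = I_P\times\omega_P$ of area one in a dyadic grid, with each $\omega_P$ split into an upper and a lower half $\omega_{P,u},\omega_{P,l}$. Averaging over translated and dilated grids then reduces matters to
\[
\sum_{P} \langle f,\varphi_P\rangle \, \varphi_P\, \mathbf 1_{\{N(x)\in\omega_{P,u}\}},
\]
where the $\varphi_P$ are $L^2$-normalized wave packets adapted to $P$.

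Next I would organize the tiles by two size parameters. The \emph{density} of a collection measures $|E\cap\{N\in\omega\}|/|I|$, taken over tiles above a given one. The \emph{size} is the largest $L^2$-normalized square function of $\langle f,\varphi_P\rangle$ over trees. The main combinatorial steps are as follows.
\begin{itemize}
\item A density lemma: tiles of density about $\delta$ can be grouped into trees whose tops satisfy $\sum |I_T|\lesssim \delta^{-1}|E|$.
\item A size lemma: tiles of size about $\sigma$ group into trees with $\sum|I_T|\lesssim\sigma^{-2}\|f\|_2^2$.
\item A tree estimate: the contribution of a single tree is bounded by $\mathrm{size}\cdot\mathrm{density}\cdot |I_T|$. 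This uses the fact that on a tree the operator is essentially a maximally truncated singular integral at a fixed frequency, controlled by a Hardy--Littlewood maximal function.
\end{itemize}
Iterating the two decompositions and summing over dyadic $\delta,\sigma$ gives a bound
\[
\sum_{\delta,\sigma}\delta\sigma\min\big(\delta^{-1}|E|,\ \sigma^{-2}|F|\big)
\]
together with the trivial bounds $\delta\lesssim1$ and $\sigma\lesssim |F|^{1/2}|E|^{-1/2}$ (after localization). Summing the resulting geometric series yields $|F|^{1/2}|E|^{1/2}$. For the part of the sum where these trivial bounds are insufficient, an exceptional set argument removes the tiles whose spatial intervals lie in a set where $M\mathbf 1_F$ is large.

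The main obstacle is the size lemma. It requires a Bessel-type orthogonality estimate among the selected trees, which are chosen greedily by the order of the top frequency. One must show that wave packets from distinct selected trees are almost orthogonal, which relies on the frequency separation that the selection order guarantees. I would handle the error terms by controlling the off-diagonal interactions with the Schur test and a $TT^*$ argument, using the decay of $\langle\varphi_P,\varphi_{P'}\rangle$ for disjoint tiles. Here I expect the bookkeeping of the strongly disjoint condition to be the delicate point.
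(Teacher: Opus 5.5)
The paper gives no proof of this theorem. It is recalled as background from \cite{C} and \cite{Hunt}, with \cite{Fef} and \cite{LT} cited for modern proofs. Your outline is the Lacey--Thiele argument of \cite{LT}, and its time-frequency core (model sum, density lemma, size lemma, tree estimate, dyadic summation in $\delta$ and $\sigma$) is the right architecture.

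\textbf{The gap is in your first reduction.} The bound $|\int C_Nf\cdot\mathbf 1_E|\lesssim|F|^{1/2}|E|^{1/2}$ for all $|f|\le\mathbf 1_F$ is restricted weak type $(2,2)$, i.e.\ boundedness $L^{2,1}\to L^{2,\infty}$. There is no interpolation-free argument that upgrades restricted weak type to weak type at a single exponent.
\begin{itemize}
\item Splitting $f$ into the level sets $\{2^k<|f|\le 2^{k+1}\}$ only gives a bound by $\sum_k 2^k|\{|f|>2^k\}|^{1/2}\approx\|f\|_{L^{2,1}}$, which $\|f\|_2$ does not control.
\item The rank-one operator $f\mapsto\langle f,|t|^{-1/2}\rangle\,|x|^{-1/2}$ on $\mathbb R$ is of restricted weak type $(2,2)$, since $|t|^{-1/2}\in L^{2,\infty}=(L^{2,1})^*$. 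It is not of weak type $(2,2)$, since $|t|^{-1/2}\notin L^2$.
\end{itemize}
Going through restricted-type bounds would need exponents on both sides of $2$ plus Stein--Weiss interpolation. For $p<2$ that in turn needs an exceptional set and a size bound $\sigma\lesssim|F|/|E|$ coming from $L^1$-averages of $\mathbf 1_F$. The square-root bound $\sigma\lesssim|F|^{1/2}|E|^{-1/2}$ you wrote never binds in the sum, so it yields nothing beyond $|F|^{1/2}|E|^{1/2}$.

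\textbf{The fix is already inside your framework.} Run the argument for arbitrary $f\in L^2$, with $\|f\|_2^2$ in place of $|F|$; your size lemma is already phrased this way.
\begin{itemize}
\item For fixed dyadic $\delta\lesssim1$, the sum over \emph{all} dyadic $\sigma$ of $\delta\sigma\min(\delta^{-1}|E|,\sigma^{-2}\|f\|_2^2)$ is geometric on both sides of the crossover $\sigma^2=\delta\|f\|_2^2/|E|$. Hence it is $\lesssim\delta^{1/2}|E|^{1/2}\|f\|_2$.
\item Summing in $\delta$ gives $|\langle C_Nf,g\rangle|\lesssim\|f\|_2|E|^{1/2}$ for every $g$ with $|g|\le\mathbf 1_E$, since the model-sum argument is insensitive to the phase of $g$.
\item Take $E$ of finite measure inside $\{|C_Nf|>\alpha\}$ and choose $g$ so that $\langle C_Nf,g\rangle=\int_E|C_Nf|$. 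This gives $\alpha|E|^{1/2}\lesssim\|f\|_2$, which is exactly the weak-$L^2$ bound of Theorem~\ref{t:CL2}.
\end{itemize}
At $p=2$, neither an a priori size bound nor an exceptional set is needed.

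\textbf{A smaller imprecision.} $\varphi_P$ must be frequency-localized to $\omega_{P,l}$, the half not containing $N(x)$. The size must be measured on trees over which these packets are lacunary. That lacunarity supplies the orthogonality used in the size lemma and in the lacunary part of the tree estimate.
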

This theorem was later extended to $L^p(\mathbb{R})$ functions by Hunt in \cite{Hunt}, see also \cite{Fef} and \cite{LT} for more modern proofs of Theorem \ref{t:CL2}. And, Carleson's Theorem has admitted profound extensions to the case of maximally polynomially modulated singular integrals, see \cite{Lie} and \cite{ZK}. The major difficulty in addressing $Cf$ is that it is invariant under three symmetries: translation and dilation, like the standard Hilbert transform, but also modulation, in that for any $\theta \in \mathbb{R}$
\begin{align}
    Cf(x) \equiv C(\text{Mod}_\theta f)(x), \; \; \; \text{Mod}_\theta g(x) := e^{2 \pi i \theta x} g(x).
\end{align}
Indeed, the presence of this additional symmetry rules out any approach which treats the e.g.\ zero-frequency as distinguished.

Before the issues of polynomial modulations were addressed, in \cite{O+}, the authors established a profound quantitative strengthening of Theorem \ref{t:CL2}. To state their result, we recall the definition of the $r$ variation of a sequence of scalars,
\begin{align}
    \mathcal{V}^r(a_\lambda : \lambda) := \sup \big( \sum_{i} |a_{\lambda_i} - a_{\lambda_{i+1}}|^r \big)^{1/r}, \qquad 0 < r < \infty
\end{align}
where the supremum runs over all finite increasing subsequences; these operators are pointwise \emph{decreasing} in $r$, with the endpoint given by
\begin{align}
    \mathcal{V}^\infty(a_\lambda) := \sup_{\lambda \neq \lambda'} |a_{\lambda} - a_{\lambda'}|,
\end{align}
namely the diameter of the sequence, typically comparable to $\| a_\lambda \|_{L^\infty(\lambda)}$. For sequences of functions, $\{ f_\lambda \}$, one defines the variation pointwise,
\begin{align}\label{e:fxnvar}
    \mathcal{V}^r(f_\lambda:\lambda)(x) :=     \mathcal{V}^r(f_\lambda(x):\lambda); 
\end{align}
note that whenever $ \mathcal{V}^r(a_\lambda) < \infty$, the sequence $\{a_\lambda\}$ necessarily converges (rapidly), so norm estimates on $\mathcal{V}^r(f_\lambda)$ are a strong way to show almost everywhere convergence of $\{ f_\lambda\}$.

Variational methods entered harmonic analysis through martingale convergence
questions \cite{LE}, and have since become a central tool for quantifying
pointwise convergence of operator families \(\{ f_\lambda=T_\lambda f \}\); see \cite{BK} for a brief survey.  A landmark result in this direction is the variation-norm Carleson theorem of Oberlin, Seeger, Tao, Thiele, and Wright, which has led to substantial subsequent work \cite{AU,BM,U}.

\begin{theorem}[Variational Carleson, Real-Variable Formulation]\label{t:Var}
    Suppose that $r > 2$ and $p > r'$. Then the following operator
    \begin{align}
        V^r f(x) := \mathcal{V}^r \big( \int f(x-t) \cdot  e^{2 \pi i \lambda t} \ \frac{dt}{t} : \lambda \in \mathbb{R} \big)
    \end{align}
    is bounded on $L^p(\mathbb{R})$.
\end{theorem}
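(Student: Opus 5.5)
This is the Oberlin–Seeger–Tao–Thiele–Wright theorem. The plan follows their time–frequency argument, reducing to a model sum over tiles in which the variation in $\lambda$ is linearized.

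\textbf{Linearization.} Since $p>r'$ and $r>2$, we may first reduce to a model operator. The $\mathcal V^r$ norm is dominated, via a choice of measurable stopping frequencies $\xi_0(x)<\xi_1(x)<\dots<\xi_K(x)$ and coefficients $a_k(x)$ with $\sum_k|a_k(x)|^{r'}\le 1$, by
\[
\Big|\sum_k a_k(x)\big(C_{\xi_{k}(x)}f(x)-C_{\xi_{k-1}(x)}f(x)\big)\Big|.
\]
Here $C_\xi f$ denotes the modulated Hilbert transform, essentially $\mathrm{Mod}_\xi H\,\mathrm{Mod}_{-\xi}f$, whose multiplier is $\mathrm{sgn}(\eta-\xi)$ up to constants. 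Each difference $C_{\xi_k}-C_{\xi_{k-1}}$ is then (twice) the Fourier projection onto $[\xi_{k-1}(x),\xi_k(x))$.

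We then decompose $\mathbf 1_{[\xi_{k-1},\xi_k)}$ into smooth Whitney pieces adapted to both endpoints. The result is a sum over tiles $P=I_P\times\omega_P$ of $\langle f,\phi_P\rangle\phi_P(x)$, weighted by $a_k(x)\mathbf 1_{\{\xi_{k-1}(x)\in\omega_{P,l},\,\xi_k(x)\notin\omega_P\}}$ or the symmetric condition at the other endpoint. The short-range (local) pieces, where $\xi_{k-1}$ and $\xi_k$ lie in the same tile frequency interval, are controlled by the Lépingle-type variational inequality for smooth truncations combined with a square function. The long-range pieces give a Carleson-type model sum.

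\textbf{Tree estimates.} As in Lacey–Thiele, the model sum is organized into trees and estimated through two quantities, size and density. Size is unchanged: the $\ell^2$ norm of the tile coefficients on a tree. Density must now incorporate the $r'$-summable weights. For a tree with top $I_T\times\omega_T$, I would define it as $|I_T|^{-1}\int_{E\cap I_T}\big(\sum_{k:\xi_k(x)\in\omega_T}|a_k(x)|^{r'}\big)^{\cdot}$, suitably averaged, in the form used by OSTTW.

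The single-tree estimate then reduces to a variational estimate for a lacunary family. Within a tree, the linearized operator is a sum over scales of Littlewood–Paley pieces with stopping scales. Since the frequencies $\xi_k(x)$ enter a fixed tree only through a chain of nested dyadic intervals, the operator on a tree is bounded by the $r$-variation of truncated singular integrals. Lépingle's inequality for $r>2$ handles this, and gives the tree estimate with the factor size $\times$ density$^{1/r'}$-type behaviour.

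Next come the standard size and density lemmas, which decompose the tile set into forests of controlled counting function. The size lemma is the $L^2$ orthogonality (Bessel) argument and is unchanged. The density lemma uses that $\sum_k|a_k|^{r'}\le1$ together with a maximal function bound.

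\textbf{Main obstacle and interpolation.} The main obstacle is the forest estimate with the $r'$-summable weights. Trees can overlap in frequency, and a single point $x$ may see up to $K$ stopping frequencies, unlike Carleson's single $N(x)$. This costs a factor in the counting function that must be absorbed by a Hölder argument in $k$, and it is exactly where the restriction $p>r'$ enters.

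I would first prove restricted weak-type estimates $|\langle V^r f,\mathbf 1_E\rangle|\lesssim |F|^{1/p}|E|^{1/p'}$ for $|f|\le\mathbf 1_F$. These are obtained by removing an exceptional set where $M\mathbf 1_F$ is large and then summing the forest bounds, which decay like $2^{-n}$ in size and like density$^{1/r'}$, over the dyadic levels. The decay is summable precisely when $p>r'$ (for $p\le 2$) or for $p$ near and above $2$. Finally, we would pass from restricted weak type to strong $L^p$ bounds by Marcinkiewicz interpolation on the open range $r'<p<\infty$.
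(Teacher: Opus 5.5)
The paper gives no proof of Theorem \ref{t:Var}. It quotes the result from \cite{O+} and uses it as a black box in two forms: the $L^2$ estimate \eqref{e:O+}, with the $\frac{r}{r-2}$ constant traced to L\'epingle, and, through Magyar--Stein--Wainger transference, Proposition \ref{p:disc} and Corollary \ref{c:disc} for every $p>r'$. Your outline is therefore a reconstruction of the cited argument, and its architecture is the right one: $\ell^{r'}$-linearization, a two-endpoint Whitney model sum, a variational tree estimate via L\'epingle, and a weighted density.

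\textbf{The gap is in the endgame.} With the ingredients you list, the final summation does not reach $p\geq r$. Consider the regime $|E|\leq|F|$, which governs large $p$. After removing $\{M\mathbf 1_E\gtrsim |E|/|F|\}$ from $F$, you have:
\begin{itemize}
\item size $\sigma\lesssim 1$ and density $\delta\lesssim |E|/|F|$;
\item counting $\sum_T|I_T|\lesssim\min(\sigma^{-2}|F|,\delta^{-1}|E|)$;
\item your tree bound $\sigma\,\delta^{1/r'}|I_T|$.
\end{itemize}
Sum over $\sigma$ first (the crossover is at $\sigma^2=\delta|F|/|E|\leq 1$), then over $\delta$:
\[
\sum_{\sigma\lesssim 1}\sum_{\delta\lesssim |E|/|F|}\sigma\,\delta^{1/r'}\min\Big(\frac{|F|}{\sigma^{2}},\frac{|E|}{\delta}\Big)
\approx\sum_{\delta\lesssim |E|/|F|}\delta^{\frac{1}{r'}-\frac12}|E|^{1/2}|F|^{1/2}
\approx |F|^{1/r}|E|^{1/r'}.
\]
This is restricted weak type at $p=r$ and nothing beyond. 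Combined with the $|F|\leq|E|$ regime, you get strong type only for $r'<p<r$. Since $\mathcal V^r$ is pointwise decreasing in $r$, rerunning the argument with a different variation exponent cannot recover $p\geq r$. So ``summable for $p$ near and above $2$'' followed by Marcinkiewicz interpolation on $(r',\infty)$ is not justified as written.

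\textbf{Where the loss comes from, and a repair.} You throw away the unweighted density in the tree estimate. Let $A_T(x):=\sum_{k\in S_T(x)}|a_k(x)|^{r'}\leq 1$ be the weight seen by the tree, and $V_T$ the scale-variation of the tree function. Split $\mathbf 1_E A_T^{1/r'}=(\mathbf 1_E A_T)^{1/r'}\mathbf 1_E^{1/r}$ and apply H\"older with exponents $r',q_2,q_3$, where $\frac1{q_2}+\frac1{q_3}=\frac1r$:
\[
\int_{E\cap I_T}A_T^{1/r'}V_T\leq\Big(\int_{E\cap I_T}A_T\Big)^{1/r'}|E\cap I_T|^{1/q_2}\,\|V_T\|_{L^{q_3}(I_T)}
\lesssim_{q_3}\sigma\,\delta^{1/r'}\Big(\frac{|E|}{|F|}\Big)^{1/q_2}|I_T|.
\]
This uses two facts:
\begin{itemize}
\item $|E\cap I_T|\lesssim\frac{|E|}{|F|}|I_T|$ off the exceptional set;
\item $\|V_T\|_{L^{q_3}}\lesssim\sigma|I_T|^{1/q_3}$, from the lacunary variational inequality together with BMO control of the tree function.
\end{itemize}
The extra factor turns the sum above into $|F|^{1/q_3}|E|^{1-1/q_3}$. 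That is restricted weak type at every $q_3\in(r,\infty)$, which is what the large-$p$ range requires.

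\textbf{The low end is also only named, not carried out.} The far-field contribution of tiles inside the exceptional set is borderline weak-$L^{r'}$; compare the $|x|^{-1/r'}$ tail of a single bump. It has to be estimated using the $\ell^{r'}$ structure of the coefficients $a_k$. For example, at a fixed $(x,I_P)$ the tiles selected by different $k$ have pairwise disjoint frequency intervals, so Hausdorff--Young in frequency controls $\sum_k|a_k||\langle f,\phi_{P_k}\rangle|$.
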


This paper studies arithmetic analogues of modulation-invariant singular integrals
on sequence spaces.  The ordinary discrete analogue of Theorem \ref{t:Var} follows
from standard transference, reviewed in Appendix \ref{a:disc}; the new difficulty is
to impose an arithmetic sparsification while retaining modulation invariance.  Our
main example is the variation-norm prime Carleson operator: 
\begin{align}\label{e:Carweight}
    \mathcal{V}^r_{\mathbb{P}} f(x) := \mathcal{V}^r \big(\sum_{n \neq 0} f(x-n) w(n) \frac{e^{2 \pi i \lambda n}}{n} : \lambda \in \mathbb{T} \big),
\end{align}
where here and throughout, we let
\begin{align}\label{e:wweight}
    w(n) := \Lambda(|n|) := \begin{cases} \log p & \text{ if } |n| = p^k \text{ is a power of a prime } \\
    0 & \text{ otherwise} \end{cases}
\end{align}
denote the even extension of the von Mangoldt function.\footnote{For the purposes of $\ell^p$-norm estimates for $1 < p < \infty$, it suffices by density to work with finitely-supported functions; we will implicitly assume that every sequence-space function introduced below is finitely supported.}

An interesting feature of our work is that no Euclidean analogue currently exists: $\mathcal{V}^r_{\mathbb{P}}$ can be viewed as a discrete analogue of Euclidean operators of the form
\begin{align}\label{e:Cmu}
   V^r_\mu f(x) := \mathcal{V}^r \big( \int f(x-t) \cdot e^{2 \pi i \lambda t} \ \frac{d\mu(t)}{t} : \lambda \in \mathbb{R} \big),
\end{align}
where $\mu$ is a singular, even, Ahlfors-David regular measure of full Hausdorff dimension. 

Our choice to begin our study of arithmetic Carleson operators with $\mathcal{V}^r_{\mathbb{P}}$ is motivated by the long history of discrete analogues in harmonic analysis along the primes, dating back to the work of Bourgain-Wierdl \cite{Bp, Wierdl}, who studied the maximal function,
\begin{align}\label{e:maxprime}
    \mathcal{M}_{\mathbb{P}} f(x) := \sup_N \frac{1}{2N+1} \sum_{|n| \leq N} |f(x-n) w(n)|,
\end{align}
motivated by questions of pointwise convergence of ergodic averages along prime times in the dynamical systems context; the singular integral, and variation-in-scale, formulations have been similarly addressed in \cite{MT} and in \cite{MTZK}, and in \cite{CHKLP}, a highly restricted variant of the smaller operator \eqref{e:CP} was studied using different methods to those introduced below. More recently, arithmetic extensions of classical theorems in pointwise ergodic theory related to prime number theory have been developed \cite{F+,FKT1,FKT2}, and the main result of this paper complements this recent work, as the singular integral analogue of \cite{F+}, and as a third example of a discrete, modulation invariant operator, at the interface of prime number theory \cite{FKT1,FKT2}; this modulation invariance distinguishes this work from previous work on oscillatory singular integrals \cite{BKSW,KR1,KR2}. Specifically, we establish the following result, which is asymptotically sharp as $r \to \infty$:

\begin{theorem}\label{t:main}
For each $r > 2$ there exist constants $\mathbf{c}(r), \ \mathbf{C}(r)$ so that
\[ r' < \mathbf{c}(r) < 2 < \mathbf{C}(r), \qquad \lim_{r \to \infty} \mathbf{c}(r) = 1, \ \lim_{r \to \infty} \mathbf{C}(r) = \infty \]
and for each $\mathbf{c}(r) < p < \mathbf{C}(r)$, there exists an absolute constant, $0 < C_{p,r} < \infty$, so that the following estimate holds:
\begin{align}
    \| \mathcal{V}^r_{\mathbb{P}} f \|_{\ell^p} \leq C_{p,r} \| f\|_{\ell^p}.
\end{align}
\end{theorem}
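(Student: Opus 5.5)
We will prove Theorem \ref{t:main} by a circle-method decomposition of the multiplier $m_\lambda(\beta) := \sum_{n\neq 0} w(n) e^{2\pi i (\lambda+\beta) n}/n$. By modulation invariance this is a single function $m(\lambda+\beta)$, so every estimate must be uniform under frequency translation; this rules out treating $\beta=0$ as special. First we decompose dyadically in scale, $w(n)/n = \sum_{k} w(n)\psi_k(n)$ with $\psi_k$ adapted to $|n|\sim 2^k$. Then we split each piece into a major-arc part and an error: for $Q=2^{\delta k}$ (or $k^{C}$) we write
\[
\widehat{w\psi_k}(\theta)=\sum_{q\le Q}\sum_{(a,q)=1}\frac{\mu(q)}{\phi(q)}\,\widehat{\psi_k}(\theta-a/q)\,\chi_{q}(\theta-a/q)+E_k(\theta),
\]
using Siegel--Walfisz/Vinogradov-type estimates. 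The minor-arc error $E_k$ contributes $\sum_k\|\sup_\lambda|E_k(\cdot+\lambda)\ast f|\|$. To control it uniformly in $\lambda$ we use higher-order Fourier uniformity of $\Lambda-\Lambda_{\text{model}}$ on short intervals (Gowers/$U^s$ norm bounds, as in \cite{FKT1,FKT2}) together with a $TT^*$ argument in the linearised $\lambda(x)$. This gives decay $2^{-ck}$ on $\ell^2$. Since we know trivial $\ell^1\to\ell^{1,\infty}$ and $\ell^\infty$ bounds losing only $O(k)$, interpolation produces the constants $\mathbf{c}(r),\mathbf{C}(r)$ away from the endpoints. These trivial bounds are also why $\mathbf{c}(r)>r'$ in general.

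For the major arcs we group the terms by denominator $q\sim 2^s$. The model operator at level $s$ is a multi-frequency Carleson operator. The variation in $\lambda$ of $\sum_{q\sim 2^s}\sum_a c(a/q)\,\mathcal{F}^{-1}[\widehat{\psi}(\cdot+\lambda-a/q)\hat f]$ is comparable to a maximal/variational operator over a $2^{-2s}$-separated set of $\approx 2^{2s}$ frequencies. We intend to prove a variable-coefficient Bourgain multi-frequency lemma in the form
\[
\Big\|\mathcal{V}^r\Big(\sum_{\theta\in\Theta}\mathrm{Mod}_\theta\, \mathcal{C}_{\lambda-\theta}\,\Pi_\theta f:\lambda\Big)\Big\|_{\ell^2}\lesssim \log^{C}(|\Theta|)\,\|f\|_{\ell^2}.
\]
To do this we reduce to finite periodic models on $\mathbb{Z}/Q\mathbb{Z}$ with $Q=\mathrm{lcm}(q\le 2^s)$ (transference as in Appendix \ref{a:disc}), tensored with the continuous variational Carleson of Theorem \ref{t:Var}. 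The Ramanujan-sum structure of $\mu(q)/\phi(q)$ then gives the arithmetic factor $2^{-s(1-\epsilon)}$, which beats the logarithmic loss. The cancellation for structured atoms is the elementary number theory we need here: we bound $\sum_{q}\mu(q)c_q(x)/\phi(q)$ pointwise on the periodic group. Summing over $s$ gives an $\ell^2$ bound. For $p\neq 2$ we interpolate again against cruder $\ell^p$ bounds losing $2^{\epsilon s}$, which keeps $\mathbf c(r)\to1$ and $\mathbf C(r)\to\infty$ as $r\to\infty$.

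The main obstacle is the multi-frequency step. Bourgain's lemma is proved for fixed frequency sets with a supremum in scale. Here the supremum is over modulation $\lambda$, and the frequencies $a/q-\lambda$ move with $\lambda$, so the frequency set is itself variable. Our first attempt will be an additive-combinatorial inverse argument. We linearise $\lambda=\lambda(x)$ and decompose $x$ by which rational $a/q$ is closest to $\lambda(x)$. If many frequencies interact substantially, an inverse theorem forces $\lambda(\cdot)$ to concentrate near a structured set, and that set we handle by the periodic model. Otherwise, almost-orthogonality across the $2^{-2s}$-separated arcs applies and only the continuous variational Carleson of Theorem \ref{t:Var} is needed, frequency by frequency.
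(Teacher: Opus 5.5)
\textbf{The periodic model has the wrong period.} The period $\operatorname{lcm}(q\le 2^s)=e^{(1+o(1))2^s}$ is far larger than the scales at which denominator level $Q=2^s$ occurs. Any transference to $\mathbb Z/N$ (Magyar--Stein--Wainger, or the identity in Proposition \ref{p:transferprop}) needs the scale-$2^k$ kernel to have Fourier support in $\|\beta\|\le (100N)^{-1}$, i.e.\ $2^k\gg N$. With your cutoff $Q=2^{\delta k}$ this fails outright. With $Q=k^C$, discarding the range $Q\ge k^C$ requires summable $U^3$ decay. Since $\|w_Q\|_{U^3}\gtrsim\|w_Q\|_{U^2}\approx Q^{-1/2}$, this forces $C>2$ (the paper takes $C=D_0$ large). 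So level $Q$ survives down to $2^k\approx 2^{Q^{1/C}}\ll e^{Q}$, and the lcm of all its denominators cannot serve as the period.

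The missing idea is the block decomposition. Split the square-free $q\in(Q/2,Q]$ into blocks $B$ with $|B|\approx Q^\kappa$, $\kappa<1/D_0$, so that $\mathcal Q_B\le \exp(O(Q^\kappa\log Q))\ll N(Q)$. There are then $Q^{1-\kappa}$ blocks, so each must satisfy $\mathfrak m_B\le Q^{\kappa-1-\epsilon}$, a power below the trivial bound $|B|Q^{-1+o(1)}$.

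\textbf{No mechanism produces that saving.} A pointwise bound on $w_Q$ cannot do it. By orthogonality of Ramanujan sums, $\|w_Q\|_{L^2(\mathbb Z/N)}^2=\sum_{Q/2<q\le Q}\mu(q)^2/\phi(q)\approx 1$, so $w_Q$ is typically of size one, not $Q^{-1+\epsilon}$. A dichotomy on $\lambda(\cdot)$ does not locate the gain either. For any $\lambda(\cdot)$, each $x$ sees all $\approx Q^2$ frequencies $\lambda(x)+a/q$ with weights $\approx Q^{-1}$. Orthogonality plus Carleson frequency by frequency then yields only the polylogarithmic bound of Proposition \ref{p:global-shell-var}, with no decay.

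The paper proceeds differently:
\begin{itemize}
\item It writes $\lambda(x)=\theta(x)/N+\omega(x)$ and puts $\theta(x)$ into the coefficients of Proposition \ref{p:variable}; the statistic $A_1$ there is $\lesssim N^{1/2}\mathfrak m_B$ uniformly in $\theta(\cdot)$.
\item The saving is then proved inside the periodic model, by M\"obius inversion $M_B\le\sum_d\beta_d M_d$.
\item It applies an inverse theorem on the \emph{input} $f$ (correlation with atoms $g(a)e_D(\vartheta(a)s)$), not on $\lambda(\cdot)$, followed by a greedy energy decrement.
\item The fiber computation of Lemma \ref{l:periodicfiber} leaves only $Q^{o(1)}$ denominators per twist, which gives $Q^{-1+o(1)}$ on atoms.
\end{itemize}
This handles only the supremum. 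Variational $\ell^2$ decay then comes from the pointwise interpolation $\mathcal V^{Q,r}\lesssim(\mathcal V^{Q,\rho})^{1-\alpha}(\mathcal C^Q)^\alpha$, which your proposal also lacks.

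\textbf{The account of the $p$-range is off.} The minor-arc pieces are summable for every $p>r'$. Also, there is no $\ell^1\to\ell^{1,\infty}$ bound with $O(k)$ loss: testing on $\delta_0$, a single-scale variational piece has weak-$\ell^1$ norm $\gtrsim 2^{k/r}$. The restriction actually comes from the crude bound $\|\mathcal V^{Q,r}\|_{\ell^s\to\ell^s}\lesssim Q^{1/r+o(1)}$ in Lemma \ref{l:red-var-l2}. There, M\"obius inversion forces the rescaling $\lambda\mapsto d\lambda$, and traversing $d$ periods costs $d^{1/r}$. Nothing in your proposal justifies the $2^{\epsilon s}$ crude loss you assume.
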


The condition \(p>r'\) is unavoidable: testing on \(f=\delta_0\), the value
at \(x=n\) contains the variation of a single exponential:
\[
    \mathcal{V}^r(e(\lambda n):\lambda\in\mathbb T)\gtrsim_r |n|^{1/r}.
\]
Thus the output has size comparable to
\[
    \Lambda(|n|)|n|^{-1+1/r},
\]
which is in \(\ell^p(\mathbb Z)\) only when \(p>r'\).

The same method also gives the full expected range for the corresponding maximal operator,
\begin{align}\label{e:CP}
\mathcal{C}_{\mathbb{P}} f(x):=\sup_{\lambda\in\mathbb T}
\big|\sum_{n\neq0} f(x-n)\Lambda(|n|)
\frac{e^{2 \pi i \lambda n}}{n}\big|.
\end{align}

\begin{theorem}\label{t:carmax}
For every \(1<p<\infty\) there exists an absolute constant $0 < C_p < \infty$ so that 
\[
\|\mathcal{C}_{\mathbb{P}} f\|_{\ell^p}\leq C_p \|f\|_{\ell^p}.
\]
\end{theorem}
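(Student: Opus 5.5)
We plan to prove Theorem \ref{t:carmax} by decomposing the kernel into a structured part and a pseudorandom part, using the circle method in the modulation-invariant form. Write $K(n) = \Lambda(|n|)/n$ and split it dyadically, $K = \sum_{j \geq 0} K_j$, with $K_j$ supported on $|n| \sim 2^j$. The multiplier $\widehat{K_j}$ is governed by the major arcs $a/q$ with $q \leq 2^{\epsilon j}$. Near such a rational it behaves like
\[
\frac{\mu(q)}{\phi(q)} \widehat{\psi_j}(\beta - a/q), \qquad \psi_j(t) = \frac{\chi(t/2^j)}{t},
\]
up to an error of size $O(j^{-A})$ coming from Siegel--Walfisz. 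This gives the decomposition
\[
\widehat{K} = \sum_{s} \sum_{q \sim 2^s} \sum_{(a,q)=1} \frac{\mu(q)}{\phi(q)}\, \widehat{\Psi_s}(\beta - a/q) + \mathcal{E},
\]
where $\widehat{\Psi_s}$ is the Hilbert-type multiplier truncated to scales $\geq 2^{s/\epsilon}$. The error $\mathcal{E}$ has exponentially small $\ell^2$ operator norm at each scale: on the minor arcs we use Vinogradov, and on the major arcs the higher-order uniformity of $\Lambda - \Lambda_{\text{Cramér}}$, in the form of Gowers-norm control after a Heath-Brown model.

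\textbf{Minor arc / error term.} The maximal modulation of the error is handled scale by scale. For fixed $j$, we bound $\sup_\lambda |f * (K_j - K_j^{\text{major}}) e(\lambda\cdot)|$ in $\ell^2$ by $2^{-cj}\|f\|_2$. The plan is a $TT^*$ argument in which the supremum over $\lambda$ is linearized as $\lambda(x)$. After Cauchy--Schwarz over the $2^j$-interval, this reduces to correlations of $\Lambda$ with quadratic-type phases $e(\lambda(x)n - \lambda(x')n)$. These are controlled by $U^2$- or $U^3$-uniformity of the normalized $\Lambda$, which is where we use higher-order Fourier uniformity. We then interpolate against a trivial $\ell^1$ and $\ell^\infty$ bound of size $O(j)$, which gives $2^{-c_p j}$ decay in $\ell^p$ for all $1<p<\infty$, and the sum over $j$ converges.

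\textbf{Major arcs.} For fixed $s$, consider the frequencies $\mathcal{R}_s = \{a/q : q \sim 2^s\}$, and note that $|\mu(q)/\phi(q)| \lesssim 2^{-s+o(s)}$. The operator is then a maximally modulated multi-frequency Hilbert transform,
\[
\sup_\lambda \Big| \sum_{\theta \in \mathcal{R}_s} c_\theta \big(\widehat{\Psi_s}(\cdot - \theta - \lambda)\hat f\big)^\vee \Big|,
\]
with frequencies separated by $\gtrsim 2^{-2s}$ and cutoff at width $2^{-s/\epsilon} \ll 2^{-2s}$. By modulation invariance we can absorb $\lambda$ as a shift of the whole configuration. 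We then apply a variable-coefficient Bourgain multi-frequency lemma: factor the operator as the periodic Ramanujan multiplier $\sum_{a/q} \frac{\mu(q)}{\phi(q)} \mathbf{1}(\cdot \approx a/q)$, acting on residues mod $Q = \operatorname{lcm}$, composed with the continuous Carleson operator (Theorem \ref{t:CL2}, transferred to $\mathbb{Z}$). The key step is to reduce to a finite periodic model on $\mathbb{Z}/Q$. There the Ramanujan sum structure $c_q(x) = \sum_{(a,q)=1} e(ax/q)$ gives the bound $\|\cdot\|_{\ell^p(\mathbb{Z}/Q)\to\ell^p} \lesssim 2^{-c_p s}$ by elementary estimates on $\sum_{q\sim 2^s}\mu(q)c_q(x)/\phi(q)$. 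Combined with the $\ell^p$ Carleson bound for the continuous piece, which costs only an $O(s^C)$ loss for the supremum over $\lambda$ via a Rademacher--Menshov or inverse-argument decomposition of $\mathcal{R}_s$ into additively structured pieces, this gives a total of $2^{-cs}s^C$. That is summable in $s$ for every $1<p<\infty$.

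\textbf{Main obstacle.} The hardest step is the major-arc multi-frequency estimate with the supremum over $\lambda$ inside. Bourgain's lemma classically loses $\log|\mathcal{R}_s|$ only for variation in scale, not in frequency. We would first try to exploit the fact that modulation invariance turns $\sup_\lambda$ into a single Carleson operator applied after the periodic projection. The remaining difficulty is that the coefficients $\mu(q)/\phi(q)$ do not factor through a single modulus. Here we would use the additive-combinatorial inverse argument: large level sets of the periodic multiplier concentrate on few moduli, and that is where the $\ell^p$ gain for $p$ far from $2$ must come from.
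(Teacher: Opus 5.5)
Your outline has the right architecture: $U^3$ control of the maximally modulated remainder scale by scale, a variable-coefficient multi-frequency lemma, a finite periodic model, and an inverse argument. However, both quantitative steps that carry the major arcs fail as stated.

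\textbf{First gap: the periodic model is not reachable.} You propose a periodic model on $\mathbb{Z}/\operatorname{lcm}(q\sim 2^s)$.
\begin{itemize}
\item Transference writes $\lambda(x)=\theta(x)/\mathcal Q+\omega(x)$ with $|\omega(x)|\le 1/(2\mathcal Q)$, and absorbs $\omega$ into the continuous Carleson piece. This requires the kernel's Fourier support to have width $\lesssim 1/\mathcal Q$, i.e.\ the kernel must live at scales $\gg\mathcal Q$. This is the support condition used in the proof of Proposition~\ref{p:transferprop}.
\item Since $\operatorname{lcm}(q\sim 2^s)\geq e^{c2^s}$ while you truncate at $2^{s/\epsilon}$, the condition fails by an exponential margin.
\item Raising the truncation does not rescue this. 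The denominators thereby expelled from the major arcs must be absorbed by $\|w_Q\|_{U^3([2^k])}\lesssim Q^{-3/8+o(1)}$. After interpolation, this is summable over scales only if the major arcs at scale $2^k$ keep all $Q\le k^{D_0}$ with $D_0=D_0(p)$ large. But $\operatorname{lcm}(q\le k^{D_0})$ is still vastly larger than $2^k$.
\end{itemize}
This is why the paper truncates $K_Q$ at $N(Q)=\exp(Q^{1/D_0-\epsilon/10})$. It also partitions the square-free $q\in(Q/2,Q]$ into blocks $B$ with $|B|\approx Q^\kappa$, $\kappa<1/D_0$, so that $\mathcal Q_B\le\exp(100Q^\kappa\log Q)\ll N(Q)$. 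Summing over the $\approx Q^{1-\kappa}$ blocks then forces the per-block bound $\mathfrak m_B\le Q^{\kappa-1-\epsilon}$ of Proposition~\ref{p:goal0}. This is a genuine power gain over the trivial $|B|Q^{-1+o(1)}$, and it is needed to survive the $\log^2 Q$ loss of Proposition~\ref{p:transferprop}.

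\textbf{Second gap: the periodic bound itself.} That gain is the heart of the proof, and it does not follow from ``elementary estimates on $\sum_q\mu(q)c_q(x)/\phi(q)$.'' The periodic object is not the Ramanujan multiplier, whose norm is trivially $\max_q 1/\phi(q)$. It is the maximally modulated operator $M_Bf(x)=\sup_\theta|\mathbb{E}_r f(x-r)w_{Q,B}(r)e_N(r\theta)|$. The supremum over $\theta$ destroys the multiplier bound, and size information on $w_{Q,B}$ gives only $|B|Q^{-1+o(1)}$. The paper's mechanism is:
\begin{itemize}
\item M\"{o}bius inversion $M_B\le\sum_d\beta_dM_d$ (Lemma~\ref{l:MdMB});
\item the inverse theorem that a large $M_df$ forces correlation with an atom $g(a)e_D(\vartheta(a)s)$ (Lemma~\ref{l:invMd});
\item an energy decrement into at most $Q^{25\epsilon}$ atoms (Proposition~\ref{p:greedy});
\item the Chinese-remainder computation of Lemma~\ref{l:periodicfiber}: $K_\theta\equiv0$ unless $D/(\theta,D)\in\{q/(q,d):q\in B\}$, and otherwise $|K_\theta|=|L_{t_\theta}|$. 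The envelope of the $L_t$ has $L^1(\mathbb{Z}/d)$-mass $Q^{-1+o(1)}$, giving $\|M_B\Phi\|\lesssim Q^{-1+o(1)}$ for every atom.
\end{itemize}

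\textbf{The $\ell^p$ range.} You also assign the inverse argument the wrong role: it produces the $\ell^2$ gain, while the range $1<p<\infty$ comes from interpolation. The multi-frequency lemma, both in Bourgain's form and in the chaining form of Proposition~\ref{p:variable}, is an $\ell^2$ estimate. Your claimed $\ell^p$ bound with $O(s^C)$ loss has no mechanism behind it, and the multi-scale kernel has no trivial $\ell^1$ bound to interpolate against. The paper instead interpolates the $\ell^2$ saving against a crude $\ell^s$ bound $Q^{o(1)}$. That bound comes from $c_q(n)=\sum_{d\mid q}d\mu(q/d)\mathbf 1_{d\mid n}$ and discrete Carleson along $d\mathbb{Z}$ (Lemma~\ref{l:red-var-l2}, Corollary~\ref{c:disc}).

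\textbf{A smaller point.} Unconditionally one gets only $(\log)^{-A}$ savings for the remainder, not $2^{-cj}$. Also, pointwise Siegel--Walfisz does not reach $q\le 2^{\epsilon j}$. Polylogarithmic decay with $A$ large suffices after interpolation, as in Proposition~\ref{p:red-var}.
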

The proof of this theorem is obtained along the way: the only place where the variational argument loses range is the change of variables in Lemma \ref{l:red-var-l2}, and this loss disappears in the non-variational supremum formulation.

\bigskip

We now describe our approach.

\subsection{Proof Overview}\label{ss:sketch}

The proof combines three main ingredients.  The first is a higher-order Fourier
decomposition of the von Mangoldt function, following the circle-method framework
used in recent work on Wiener-Wintner along the primes
\cite{F+}. In that setting, the characteristic nature of the Kronecker factor dictated that the argument should be organized according to spectral statistics of the input function. While our arguments begin similarly to those of \cite{F+}, with higher-order Fourier-analytic reductions, the Kronecker factor is much less relevant in the singular-integral/variational setting. The replacement for spectral organization is a variable-coefficient multi-frequency estimate, in the
spirit of Bourgain's maximal inequality \cite{B2} and its entropy refinements, related to those previously developed and refined in \cite{GZK,KWW}; this
is where the variation-norm Carleson theorem enters.  The third is a finite
periodic model in which the arithmetic structure of Ramanujan sums becomes
explicit.

In particular, after these reductions, the main object is a family of single-scale, maximally
modulated operators whose arithmetic weights have the form
\[
    w_{Q,B}:=\sum_{q\in B}\frac{\mu(q)}{\phi(q)}c_q, \qquad c_q(n) := \sum_{(a,q) =1} e^{2 \pi i an/q}
\]
where \(B\) ranges over blocks of square-free integers in \((Q/2,Q]\), with
\(|B|\approx Q^\kappa\) with $\kappa > 0$ sufficiently small; each \(w_{Q,B}\) is periodic with period
\[
    \mathcal Q_B:=\operatorname{lcm}(q:q\in B).
\]
The analytic lifting step reduces the physical problem to bounding the periodic operator
\begin{align}
    M_Bf(x)
    :=
    \sup_{\lambda\in\mathbb Z/\mathcal Q_B}
    \bigg|
    \frac{1}{\mathcal Q_B}
    \sum_{r\leq\mathcal Q_B}
    f(x-r)w_{Q,B}(r)e^{2\pi i\lambda r}
    \bigg|
\end{align}
on $\ell^2$ with a power-savings in $Q$; this is proved by an inverse theorem and an energy decrement argument.  Roughly speaking, if \(M_Bf\) is large, then \(f\) correlates with an atom
which is constant-frequency along arithmetic progressions:
\begin{align}
    [\mathcal Q_B]\ni a+sd
    \longmapsto
    g(a)e^{2\pi i d\vartheta(a)s/\mathcal Q_B},
    \qquad
    \frac1d\sum_{a\leq d}|g(a)|^2=1,
\end{align}
where \(d\mid q\) for some \(q\in B\).  Iterating this inverse statement reduces the
periodic problem to testing \(M_B\) on such structured atoms.

For structured atoms, the remaining estimate is arithmetic, and one is led to study the kernels
\begin{align}
    K_\theta(u)
    :=
    \frac1D
    \sum_{\ell\leq D}
    w_{Q,B}(d\ell+u)
    e^{2\pi i(d\ell+u)\theta/\mathcal Q_B},
    \qquad
    D:=\mathcal Q_B/d;
\end{align}
a Chinese-remainder computation shows that these kernels vanish except when
\[
    t_\theta:=\frac{D}{(\theta,D)}
    \in
    \bigg\{\frac{q}{(q,d)}:q\in B\bigg\}.
\]
Using this sparsity constraint, we prove that 
\[
    \frac{1}{d} \sum_{u \leq d} \sup_\theta |K_\theta(u)| \lesssim Q^{-1+o(1)},
\]
which allows us to conclude that every structured atom \(\Phi\) satisfies
\[
    \|M_B\Phi\|_{L^2(\mathbb Z/\mathcal Q_B)}
    \lesssim Q^{-1+o(1)},
\]
yielding the power saving needed to sum over denominator blocks.

\medskip

We now describe the organization of the paper.
\subsection{Structure}
\medskip

The following diagram records the main dependencies in the proof.

\begin{center}
\small
\setlength{\fboxsep}{4pt}
\begin{tabular}{c}
\fbox{\begin{minipage}{0.82\textwidth}\centering
Higher-order Fourier reduction\\
$w=\sum_Q w_Q+\text{uniform error}$
\end{minipage}}\\[-0.2em]
$\downarrow$\\[-0.2em]
\fbox{\begin{minipage}{0.82\textwidth}\centering
Major-arc denominator slices\\
$\mathcal V_{\mathbb P}^r \leadsto \sum_Q\mathcal V^{Q,r}$
\end{minipage}}\\[-0.2em]
$\downarrow$\\[-0.2em]
\fbox{\begin{minipage}{0.82\textwidth}\centering
Block decomposition\\
$w_Q=\sum_B w_{Q,B}$
\end{minipage}}\\[-0.2em]
$\downarrow$\\[-0.2em]
\fbox{\begin{minipage}{0.82\textwidth}\centering
Analytic lifting\\
Physical block $\leadsto$ Periodic model $M_B$
\end{minipage}}\\[-0.2em]
$\downarrow$\\[-0.2em]
\fbox{\begin{minipage}{0.82\textwidth}\centering
Periodic inverse theorem\\
$M_BF$ large $\Rightarrow$ $F$ correlates with a structured atom
\end{minipage}}\\[-0.2em]
$\downarrow$\\[-0.2em]
\fbox{\begin{minipage}{0.82\textwidth}\centering
Energy decrement\\
Reduce to testing $M_B$ on structured atoms
\end{minipage}}\\[-0.2em]
$\downarrow$\\[-0.2em]
\fbox{\begin{minipage}{0.82\textwidth}\centering
Ramanujan fiber computation
\end{minipage}}\\[-0.2em]
$\downarrow$\\[-0.2em]
\fbox{\begin{minipage}{0.82\textwidth}\centering
Structured-atom estimate\\
$\|M_B\Phi\|_{L^2(\mathbb Z/\mathcal Q_B)}\lesssim Q^{-1+o(1)}$
\end{minipage}}\\[-0.2em]
$\downarrow$\\[-0.2em]
\fbox{\begin{minipage}{0.82\textwidth}\centering
Conclusion\\
Block saving $\Rightarrow$ Dyadic $Q$ summation $\Rightarrow$
Theorems~\ref{t:main} and~\ref{t:carmax}
\end{minipage}}
\end{tabular}
\end{center}

\medskip

With this scheme in mind, we describe the organization of the paper:

\medskip

Section \ref{s:Prelim} collects notation, Gowers-norm estimates, and variational
preliminaries;

Section \ref{s:red} carries out the reduction from the prime operator
to major-arc denominator slices; 

Section \ref{s:MFanalysis} develops the
variable-coefficient multi-frequency estimate used in the physical-to-periodic
reduction;

Section \ref{s:per} introduces the periodic model and proves the inverse
theorem leading to structured atoms; and

Section \ref{s:number} proves the arithmetic
estimate for those atoms;

The appendix contains complementary material: Appendix \ref{a:comp} discusses
related arithmetic weights and the Piatetski-Shapiro extension, while Appendix
\ref{a:disc} recalls the discrete Carleson input used in the paper.

\bigskip

We conclude our introduction by presenting some open questions that follow naturally from our work.

\subsection{Open Problems}
\begin{problem}[Discrete Harmonic Analysis: Arithmetic Testing Conditions]
The proof of Theorem \ref{t:main} uses special structural features of the
von Mangoldt major arcs: square-free denominators, Ramanujan-sum identities, and
near-optimal decay of rational Fourier coefficients.  It is natural to ask for an
intrinsic testing condition on arithmetic weights \(w\) guaranteeing boundedness of
the associated Carleson operators, in analogy with the admissibility criteria in
\cite{FKT1,FKT2}.  Appendix \ref{a:comp} explains why admissibility is insufficient: this problem is quite subtle!
\end{problem}

\begin{problem}[Euclidean Models]
Find natural singular measures \(\mu\) for which the modulation-invariant operators
\begin{align}
   \mathcal{V}^r\bigg(
   \int K(x-t)f(t)e^{2\pi i\lambda t}\,d\mu(t)
   :\lambda\in\mathbb R
   \bigg)
\end{align}
are bounded on \(L^p(\mathbb R)\), even in the case \(p=2\).  The arithmetic weights
appearing in this paper suggest Euclidean analogues in which \(\mu\) is singular,
even, and full-dimensional, while \(K\) is a dimensionally consistent
Calder\'on-Zygmund kernel.
\end{problem}

\begin{problem}[Ergodic Theory: Prime Return Times for Singular Series]
The Wiener-Wintner theorem along the primes was proved in \cite{F+}, while
\cite{FKT1,FKT2} developed prime return-times and double-recurrence theorems.  A
natural singular-integral analogue would be a prime-weighted version of the
return-times theorem for Hilbert-series type averages, in the spirit of
\cite[Theorem 3.4]{D}, \cite[Theorem 3.10]{D}, and \cite[Corollary 3.11]{D}.  One
possible target is almost-sure convergence of
\[
    \sum_{1\leq |n|\leq N}
    \frac{\Lambda(|n|)}{n}
    f(T^n x)g(S^n\omega)
\]
for a full-measure set of return-times parameters \(\omega\), uniformly over
secondary measure-preserving system.
\end{problem}

\section{Preliminaries}\label{s:Prelim}

\subsection{General Notation}
Throughout, we let
\begin{align}
    e(t) := e^{2 \pi i t}, \; \; \; e_N(t) := e(t/N)
\end{align}
denote the complex exponential,
\begin{align}
    \| x \| := \| x \|_{\mathbb{T}} := \min_{k \in \mathbb{Z}} |x-k|,
\end{align}
and for $\theta \in \mathbb{R}$, define
\begin{align}
    \text{Mod}_\theta g(x) := e(\theta x) g(x);
\end{align}
we identify 
\[ \mathbb{Z}/N := \{ 0,1,\dots, N-1\} \mod N.\]

We use the standard notation for $L^1$-normalized dilations:
\begin{align}
    \varphi_N(n) := \frac{1}{N} \varphi(\frac{n}{N}),
\end{align}
and let
\begin{align}
    M_{\text{HL}} f(x):=  \sup_{I \ni x} \, \frac{1}{|I|} \sum_{n \in I} |f(n)|  
\end{align}
denote the discrete Hardy-Littlewood maximal function; here and throughout, we use $I$ to denote discrete intervals.

We collect
\begin{align}\label{e:GammaQ}
    \Gamma_Q := \{ a/q \text{ reduced} : Q/2 < q \leq Q \},
\end{align}
and define
\begin{align}\label{e:WQ}
    w_Q(n) := \sum_{a/q \in \Gamma_Q} \frac{\mu(q)}{\phi(q)} e(na/q ) = \sum_{Q/2 < q \leq Q} \frac{\mu(q)}{\phi(q)} c_q(n)
\end{align}
where
\begin{align}\label{e:ram}
c_q(n) = \sum_{a/q : \ (a,q) =1} e(a/q n)
\end{align}
denotes a Ramanujan sum, and $\mu,\phi$ denote the M\"{o}bius and totient functions, respectively. We will often make use of the lower bound
\begin{align}\label{e:totientbound}
    \phi(q) \gtrsim \frac{q}{\log \log (10 + q)}.
\end{align}

For subsets $B \subset (Q/2,Q]$
we define
\begin{align}\label{e:lcmQ}
\mathcal{Q}_B := \text{lcm}( q \in B )
\end{align}
and
\begin{align}
w_{Q,B}(n) := \sum_{q \in B} \frac{\mu(q)}{\phi(q)} c_q(n).    
\end{align}

We let $\psi(t) \in \mathcal{C}_c^{\infty}(\{ t : 1/4 < |t| < 1\})$ be a smooth, odd approximation to $\frac{1}{t} \cdot  \mathbf{1}_{|t| \approx 1} $ so that
\begin{align}
\sum_{k \geq 1} \psi_k(t) = \frac{1}{t}, \; \; \; |t| \geq 1, \qquad \psi_k(t) := 2^{-k} \psi(t 2^{-k}).
\end{align}

\subsection{Asymptotic Notation}\label{sss:O}
We will make use of the modified Vinogradov notation. We use $X \lesssim Y$ or $Y \gtrsim X$ to denote
the estimate $X \leq CY$ for an absolute constant $C$ and $X, Y \geq 0.$  If we need $C$ to depend on a
parameter, we shall indicate this by subscripts, thus for instance $X \lesssim_p Y$ denotes the estimate $X \leq C_p Y$ for some $C_p$ depending on $p$. We use $X \approx Y$ as shorthand for $Y \lesssim X \lesssim Y$. We use the notation $X \ll Y$ or $Y \gg X$ to denote that the implicit constant in the $\lesssim$ notation is extremely large, and analogously $X \ll_p Y$ and $Y \gg_p X$.

We also make use of big-Oh and little-oh notation: we let $O(Y)$  denote a quantity that is $\lesssim Y$, and similarly
$O_p(Y )$ will denote a quantity that is $\lesssim_p Y$; we let $o_{t \to a}(Y)$
denote a quantity whose quotient with $Y$ tends to zero as $t \to a$ (possibly $\infty$), and
$o_{t \to a;p}(Y)$
denote a quantity whose quotient with $Y$ tends to zero as $t \to a$ at a rate depending on $p$. When clear from context, we will suppress the $t \to a$ subscript.

\subsection{Gowers Norms and Technology}
\label{sub:gowers_norms}
Let $f \colon \mathbb{Z} \to \mathbb{C} $ be a finitely supported function on the integers. Set the conjugation-difference operator to be 
\begin{align}\label{e:triangle}
  \Delta_h f(x) \coloneqq  f(x) \overline{f(x+h)}, 
  \qquad x,h\in \mathbb{Z} . 
\end{align}
The basic fact here is 
\begin{align} \label{e;double}
\Bigl\lvert \sum_x f(x)\Bigr\rvert ^2 
= 
\sum_{x,h} \Delta_h f(x). 
\end{align}
The higher order conjugation-difference operator is inductively defined to be 
\begin{align}
   \Delta_{h_1,\dots,h_s}f(x) \coloneqq  \Delta_{h_s}(\Delta_{h_1,\dots,h_{s-1}} f)(x), 
    \qquad x,h_1, \ldots, h_s\in \mathbb{Z} . 
\end{align}
Then, the $s$th order Gowers norm is 
\begin{align} \label{e;GowersDef}
    \| f \|_{U^s(\mathbb{Z})}^{2^s} \coloneqq  
    \sum_{x,h_1,\dots,h_s} \Delta_{h_1,\dots,h_s}f(x). 
\end{align}
For $s=1$, this is a semi-norm, while higher orders are norms. In particular, for $s=2$, we have 
\begin{align}  \label{e;U2-ell4}
\lVert f \rVert_{U^2(\mathbb{Z} )} ^{4} 
= \int_{\mathbb{T} } \lvert  \widehat{f} (\beta )\rvert ^{4}\;d \beta;
\end{align}  
we recall the inductive relationship between norms given by 
\begin{align}\label{e:foliate}
\| f \|_{U^{s+1}(\mathbb{Z})}^{2^{s+1}} = \sum_{h_1,\dots,h_{s-1}} \| \Delta_{h_1,\dots,h_{s-1}} f \|_{U^2(\mathbb{Z})}^4.
\end{align}

For integers $N$, let $[N]=\{1,2, \dots, N\}$. 
Denote the usual expectation by 
\begin{align}
 \mathbb{E}_{n \in [N]} f(n) := \frac{1}{N} \sum_{n \in [N]} f(n)\end{align}
and more generally, for finite $X \subset \mathbb{Z}$, define
\begin{align}
 \mathbb{E}_{n \in X} f(n) := \frac{1}{|X|} \sum_{n \in X} f(n) \end{align}
and the norms
\begin{align}
\| f \|_{L^p(X)}^p := \frac{1}{|X|} \sum_{n \in X} |f(n)|^p,
\end{align}
and inner products
\begin{align}\label{e:Innerprod}
    \langle f,g \rangle_X := \frac{1}{|X|} \sum_{n \in X} f(n) \overline{g(n)}.
\end{align}

We then define normalized $U^s$ norms as follows. For $f \colon [N] \to \mathbb C$, 
set 
\begin{equation}
    \lVert f \rVert_{U^s([N]) } \coloneqq  
    \frac{\| f \|_{U^s(\mathbb{Z})}}{\lVert \mathbf{1}_{[N]} \rVert_{U^s(\mathbb Z) }} \approx \frac{\| f \|_{U^s(\mathbb{Z})}}{N^{\frac{s+1}{2^s}}}.
\end{equation}

We make repeated use of the $s=3$ case of the following estimate, due to Eisner-Tao \cite{ET}, relating $U^s$ and $\ell^p$ norms for appropriate $p$ (determined by scaling).
\begin{lemma}\label{l:ET}
    For each $s \geq 1$, set $p_s := \frac{2^s}{s+1}$. Then
    \begin{align}
        \| f \|_{U^s([N])} \lesssim_s \| f \|_{L^{p_s}([N])}.
    \end{align}
\end{lemma}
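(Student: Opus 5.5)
This is the Eisner--Tao bound, and I would prove it by induction on $s$, interpolating between a trivial $L^\infty$ bound and a sharp $L^1$-type bound at each level. Both sides are normalized averages over $[N]$; the normalization constant satisfies $\|\mathbf{1}_{[N]}\|_{U^s(\mathbb Z)}^{2^s}\approx N^{s+1}$, since this is the number of parallelepipeds $(x,h_1,\dots,h_s)$ all of whose vertices lie in $[N]$. After scaling, the inequality is therefore equivalent to
\[
\|f\|_{U^s(\mathbb Z)}\lesssim_s \|f\|_{\ell^{p_s}(\mathbb Z)},\qquad p_s=\tfrac{2^s}{s+1},
\]
for $f$ supported on $[N]$. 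The exponent $p_s$ is exactly the one for which both sides scale like $N^{(s+1)/2^s}$ on indicator functions.

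\textbf{Base cases.} For $s=1$ we have $p_1=1$ and $\|f\|_{U^1}=|\sum f|\le\|f\|_{\ell^1}$. For $s=2$ we have $p_2=4/3$, and by \eqref{e;U2-ell4} together with Hausdorff--Young,
\[
\|f\|_{U^2}=\|\widehat f\|_{L^4(\mathbb T)}\le \|f\|_{\ell^{4/3}}.
\]
This is the model for what follows.

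\textbf{Inductive step.} For general $s$ I would use the Gowers--Cauchy--Schwarz inner product
\[
\langle (f_\omega)_{\omega\in\{0,1\}^s}\rangle_{U^s}=\sum_{x,\vec h}\prod_\omega \mathcal C^{|\omega|}f_\omega(x+\omega\cdot\vec h),
\]
where $\mathcal C$ denotes complex conjugation, and aim for the multilinear estimate
\[
|\langle (f_\omega)\rangle|\lesssim \prod_\omega \|f_\omega\|_{\ell^{p_s}}.
\]
This is a Brascamp--Lieb / Hölder--Young type inequality for the $2^s$ linear forms $(x,\vec h)\mapsto x+\omega\cdot\vec h$ on $\mathbb Z^{s+1}$. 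The dimension count
\[
2^s\cdot\tfrac1{p_s}=s+1
\]
is precisely the scaling condition, which is consistent with the claimed exponent.

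To prove the multilinear bound, I would foliate the sum in the last variable $h_s$. Fixing $h_1,\dots,h_{s-1}$, the sum over $(x,h_s)$ pairs the $\omega_s=0$ and $\omega_s=1$ halves of the cube. I would apply the inductive hypothesis to the functions $F_{\omega'}(y)=f_{\omega',0}(y)\overline{f_{\omega',1}(y+h_s)}$, summed over $h_s$. This requires a mixed-norm control of the $\ell^{p_{s-1}}$ norms of these products, which I would obtain with Young's inequality in $h_s$ together with Hölder. Checking that the exponents close up is the main obstacle: $p_{s-1}$ and $p_s$ are not related by a simple Hölder step.

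\textbf{Fallback.} If the exponent bookkeeping fails, I would instead use multilinear interpolation. At $\ell^\infty$ with indicator-like inputs the bound is trivial, since it amounts to counting parallelepipeds. At the "$\ell^1$ in one slot, $\ell^\infty$ elsewhere" endpoints, the bound follows from counting solutions, namely $\lesssim \|f_{\omega_0}\|_1\prod_{\omega\neq\omega_0}\|f_\omega\|_\infty N^{s}$. Finite support in $[N]$ then lets me reduce via the restricted-weak-type (Christ/Lorentz) approach: test on indicators $f_\omega=\mathbf 1_{E_\omega}$ with $E_\omega\subset[N]$, bound the number of parallelepipeds with $\omega$-vertex in $E_\omega$ by $\prod|E_\omega|^{1/p_s}$ (a discrete Loomis--Whitney / Gowers-type box inequality), and upgrade to strong type using $p_s>1$ for $s\ge2$.
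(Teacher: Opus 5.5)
The paper gives no proof of this lemma; it only cites Eisner--Tao. Your main route (induction on $s$ through $\Delta_h$, closed by Young's inequality) is the standard argument behind that citation, and it works. But as written the proposal leaves its decisive step unverified, and your fallback fails. The step you flag as the main obstacle closes exactly, and the diagonal form is enough.

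For $s\ge 2$ the definition gives $\|f\|_{U^s(\mathbb Z)}^{2^s}=\sum_h\|\Delta_h f\|_{U^{s-1}(\mathbb Z)}^{2^{s-1}}$. Hence, by the inductive hypothesis (constant $1$ on $\mathbb Z$),
\[
\|f\|_{U^{s}(\mathbb Z)}^{2^{s}}\le\sum_{h}\|\Delta_h f\|_{\ell^{p_{s-1}}}^{2^{s-1}}=\sum_h \big(G\star G(h)\big)^{s},
\]
where $G:=|f|^{p_{s-1}}$ and $G\star G(h):=\sum_y G(y)G(y+h)$; the equality uses $2^{s-1}/p_{s-1}=s$. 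Apply Young's inequality with $1+\frac1s=\frac2a$, i.e.\ $a=\frac{2s}{s+1}\ge 1$, to get $\|G\star G\|_{\ell^s}\le\|G\|_{\ell^a}^2$. The link between $p_{s-1}$ and $p_s$ you were missing is
\[
a\,p_{s-1}=\frac{2^{s-1}}{s}\cdot\frac{2s}{s+1}=p_s,
\]
so $\|G\|_{\ell^a}=\|f\|_{\ell^{p_s}}^{p_{s-1}}$ and
\[
\|f\|_{U^s(\mathbb Z)}^{2^s}\le\|G\|_{\ell^a}^{2s}=\|f\|_{\ell^{p_s}}^{2s\,p_{s-1}}=\|f\|_{\ell^{p_s}}^{2^s}.
\]
The same computation proves your multilinear version verbatim: apply the hypothesis to your $F_{\omega'}$, use H\"older in $h_s$ over the $2^{s-1}$ factors, then Young on each factor. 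The case $s=2$ needs no Hausdorff--Young, since it is the first instance of this step. Your scaling remark, with $\|\mathbf 1_{[N]}\|_{U^s(\mathbb Z)}^{2^s}\gtrsim_s N^{s+1}$, converts the bound on $\mathbb Z$ into the normalized one.

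Discard the fallback; it cannot reach $p_s$. Your endpoints sit at exponent tuples $(1/p_\omega)_\omega$ with $\sum_\omega 1/p_\omega\in\{0,1\}$ and cost $N^{s+1}$ and $N^{s}$. The target tuple has $\sum_\omega 1/p_s=s+1$, so it is not in their convex hull. Interpolating and then using H\"older on $[N]$ always loses a power of $N$; at best you recover roughly $\|f\|_{U^s([N])}\lesssim\|f\|_{L^{2^s}([N])}$.

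The restricted weak-type variant has two problems. It presupposes the box inequality $\lesssim\prod_\omega|E_\omega|^{1/p_s}$, which is the lemma itself for indicators. And restricted weak type at a single exponent tuple does not upgrade to strong type, whatever the value of $p_s$.

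If you want an interpolation proof, use different endpoints: put $\ell^1$ on $s+1$ vertices $\omega$ whose forms $x+\omega\cdot\vec h$ are linearly independent, and $\ell^\infty$ elsewhere. Each such bound has constant $1$, because $(x,\vec h)\mapsto(x+\omega\cdot\vec h)_{\omega}$ is then injective on $\mathbb Z^{s+1}$. Averaging these endpoints over the cube's symmetries, which act transitively on vertices and preserve independence, lands exactly on $(1/p_s,\dots,1/p_s)$; multilinear Riesz--Thorin then finishes.
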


We will also make use of the following $U^3$ estimate, essentially established in \cite[Proposition 1.14]{F+}. For completeness we include a proof.
\begin{lemma}\label{e:U3maxmod}
The following estimate holds:
\begin{align}
    \| \sup_\lambda |\mathbb{E}_{n \in [N]} f(x-n) w(n) e(\lambda n)| \|_{L^4([-N,2N])} \lesssim \| w \|_{U^3([N])} \|f \|_{U^3([-N,2N])},
\end{align}
and in particular
\begin{align}
    \| \sup_\lambda |\mathbb{E}_{n \in [N]} f(x-n) w(n) e(\lambda n)| \|_{L^2([-N,2N])} \lesssim \| w \|_{U^3([N])} \|f \|_{L^2([-N,2N])}.
\end{align}
More generally, whenever $\phi \in \mathcal{C}_c^2([0,1])$ with $\| \phi \|_{\mathcal{C}^2(\mathbb{R})} \leq 1$,
\begin{align}
    \| \sup_\lambda |\sum_n \phi_N(n) f(x-n) w(n) e(\lambda n)| \|_{L^2([-N,2N])} \lesssim \| w \|_{U^3([N])} \|f \|_{L^2([-N,2N])}.
\end{align}
\end{lemma}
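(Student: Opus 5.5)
The plan is a $TT^*$-type argument: linearize the supremum, square out, and push the $\lambda$-dependence into a sum that can be controlled by the $U^2$ norm of a differenced weight; Cauchy--Schwarz then upgrades this to $U^3$. Fix a measurable choice $\lambda(x)$ and a dual function $g$ supported on $[-N,2N]$ with $\|g\|_{L^{4/3}([-N,2N])}\le 1$. We must bound
\[
\mathbb{E}_{x\in[-N,2N]} g(x)\,\mathbb{E}_{n\in[N]} f(x-n)w(n)e(\lambda(x)n).
\]
Substituting $m=x-n$ turns this into $\mathbb{E}_n w(n)\,\mathbb{E}_x g(x)e(\lambda(x)(x-m))f(m)$ with $m=x-n$. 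The phase $e(\lambda(x)x)$ can be absorbed into $g$, leaving the factor $e(-\lambda(x)m)$. So the quantity has the form
\[
\sum_{x,n} G(x)\,f(x-n)\,w(n)\,e(\lambda(x)n),
\]
which is the original expression. The cleaner route is therefore to Cauchy--Schwarz in $x$ directly.

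For fixed $x$,
\[
\big|\mathbb{E}_n f(x-n)w(n)e(\lambda n)\big|^2=\mathbb{E}_{n}\mathbb{E}_{h} \,\Delta_h\big(f(x-\cdot)w\big)(n)\,e(-\lambda h)
\]
up to boundary normalization, using \eqref{e;double}. Taking the supremum over $\lambda$ and bounding $e(-\lambda h)$ in absolute value would lose everything. Instead, apply the van der Corput step twice.

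First, with $F_x(n):=f(x-n)w(n)$, write $\sup_\lambda|\widehat{F_x}(\lambda)|^4\le \|\widehat{F_x}\|_{L^\infty}^2\|\widehat{F_x}\|_{L^2}^2$. A cleaner bound is
\[
\sup_\lambda|\widehat{F_x}(\lambda)|^4\le \Big(\sum_h|\textstyle\sum_n \Delta_h F_x(n)|\Big)^2 \lesssim N\sum_h\Big|\sum_n\Delta_hF_x(n)\Big|^2,
\]
after normalization. Here the phase is killed, since the $\lambda$-dependence of $\Delta_h(F_xe(\lambda\cdot))$ is the constant $e(-\lambda h)$. Expanding $|\sum_n\Delta_hF_x(n)|^2=\sum_{n,k}\Delta_{h,k}F_x(n)$ gives
\[
\sup_\lambda|\cdot|^4\lesssim \text{(normalized)}\sum_{h,k,n}\Delta_{h,k}w(n)\,\Delta_{h,k}f(x-n).
\]
Summing over $x$ then yields $\sum_{h,k}\big(\sum_n\Delta_{h,k}w(n)\,\Delta_{h,k}f(x-n)\big)$ summed in $x$. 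This is a two-parameter correlation of $\Delta_{h,k}w$ with a reflected $\Delta_{h,k}f$.

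The main step is to bound this by $U^3$ norms. Summing in $x$ and $n$ factorizes the expression as
\[
\sum_{h,k}\Big(\sum_n\Delta_{h,k}w(n)\Big)\Big(\sum_y\Delta_{h,k}f(y)\Big).
\]
Applying Cauchy--Schwarz in $(h,k)$ gives the bound
\[
\Big(\sum_{h,k}\big|\sum_n\Delta_{h,k}w\big|^2\Big)^{1/2}\Big(\sum_{h,k}\big|\sum_y\Delta_{h,k}f\big|^2\Big)^{1/2}=\|w\|_{U^3}^4\,\|f\|_{U^3}^4,
\]
using $|\sum\Delta_{h,k}F|^2=\sum_{\ell}\sum\Delta_{h,k,\ell}F$ and \eqref{e;GowersDef}. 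Tracking normalizations (each $U^3([N])$ carries $N^{4/8}$) and taking fourth roots gives the first estimate.

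The obstacle is bookkeeping the finite supports. The differences $h,k$ range over $O(N)$, and $f$ lives on $[-N,2N]$, which is comparable to $[N]$ in size, so constants are absolute. The second estimate follows by Hölder ($L^2\le L^4$ on the interval, in normalized measure) combined with Lemma \ref{l:ET} at $s=3$, where $p_3=2$: $\|f\|_{U^3}\lesssim\|f\|_{L^2}$. For the smooth version, expand $\phi_N(n)=\tfrac1N\phi(n/N)$ via its Fourier series on a scale-$N$ interval, $\phi(t)=\sum_j\hat\phi(j)e(jt)$ with $|\hat\phi(j)|\lesssim (1+|j|)^{-2}$. Each term is a modulation $e(jn/N)$ times $\mathbf 1_{[N]}$, and the modulation is absorbed into the supremum over $\lambda$. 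Summing the $\ell^1$ coefficients, using $\|\phi\|_{\mathcal C^2}\le1$, gives the claim.
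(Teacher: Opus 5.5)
Your argument is correct and follows the paper's proof. It uses one van der Corput differencing, which turns the modulation into the $n$-independent phase $e(-\lambda h)$, and then Cauchy--Schwarz down to $U^3$ norms. The paper bounds $\|\Delta_h w * \Delta_{-h} f\|_{\ell^2}$ by $U^2$ norms via Plancherel and H\"older and then applies Cauchy--Schwarz in $h$; you instead expand the square, sum in $x$ to factor completely, and apply a single Cauchy--Schwarz in $(h,k)$, which is the same estimate written in physical space. The reflected factor should read $\Delta_{-h,-k}f(x-n)$, which is harmless since $\sum_y \Delta_{-h,-k}f(y)=\sum_y\Delta_{h,k}f(y)$. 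Your smooth-cutoff step, which expands $\phi$ in a Fourier series on $[0,1]$ and absorbs each modulation $e(jn/N)$ into the supremum over $\lambda$, is equivalent to the paper's Fourier-inversion argument, which places the modulation on $w$ and uses the character invariance of $U^3$.
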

\begin{proof}
By linearization, it suffices to prove the estimate uniformly for an arbitrary selector
\[
    \theta:[-N,2N]\to\mathbb T.
\]
Thus, if we set
\[
    A_\theta f(x)
    :=
    \mathbb E_{n\in[N]} w(n)f(x-n)e(\theta(x)n),
\]
it suffices to prove
\begin{align}\label{e:U3lin-target}
    \|A_\theta f\|_{L^4([-N,2N])}
    \lesssim
    \|w\|_{U^3([N])}
    \|f\|_{U^3([-N,2N])},
\end{align}
with a constant independent of \(\theta\).

We extend \(w\mathbf 1_{[N]}\) and \(f\mathbf 1_{[-N,2N]}\) by zero to all of
\(\mathbb Z\), and begin by recording a standard van der Corput identity in normalized form. If \(a\) is
supported on an interval of length \(O(N)\), then
\begin{align}\label{e:vdc-local}
    \big|
    \mathbb E_{n \in [N]} a(n)
    \big|^4
    \lesssim
    \mathbb E_{|h|\leq N}
    \big|
    \mathbb E_{n \in [N]} \Delta_h a(n)
    \big|^2.
\end{align}
Indeed,
\[
    \big|\mathbb E_{n \in [N]} a(n)\big|^2
    =
    \mathbb E_{|h|\leq N}
    \mathbb E_{n \in [N]} \Delta_h a(n)
\]
up to harmless Fej\'{e}r weights, and \eqref{e:vdc-local} follows by Cauchy--Schwarz in
\(h\).

Apply \eqref{e:vdc-local} for each fixed \(x\) to
\[
    a_x(n):=w(n)f(x-n)e(\theta(x)n);
\]
since
\[
    \Delta_h e(\theta(x)n)
    =
    e(\theta(x)n)\overline{e(\theta(x)(n+h))}
    =
    e(-\theta(x)h),
\]
this phase is independent of \(n\), and therefore disappears after taking absolute values.
Thus
\begin{align}\label{e:first-diff}
    |A_\theta f(x)|^4
    &\lesssim
    \mathbb E_{|h|\leq N}
    \big|
    \mathbb E_{n \in [N]}
    \Delta_h w(n)\,
    \Delta_{-h} f(x-n)
    \big|^2.
\end{align}
Here \(\Delta_{-h}f(y)=f(y)\overline{f(y-h)}\); replacing \(h\) by \(-h\) later is harmless.

Averaging \eqref{e:first-diff} over \(x\in[-N,2N]\), we get
\begin{align}\label{e:xh-average}
    \|A_\theta f\|_{L^4([-N,2N])}^4
    &\lesssim
    \mathbb E_{|h|\leq N}
    \Big\|
    \mathbb E_{n \in [N]}
    \Delta_h w(n)\,
    \Delta_{-h} f(x-n)
    \Big\|_{L^2_x([-N,2N])}^2.
\end{align}

We next use the following \(U^2\)-convolution estimate: for any finitely supported \(F,G\) at
scale \(N\),
\begin{align}\label{e:U2-conv}
    \Big\|
    \mathbb E_{n \in [N]} F(n)G(x-n)
    \Big\|_{L^2_x}
    \lesssim
    \|F\|_{U^2}\|G\|_{U^2},
\end{align}
where the \(U^2\)-norms are normalized at scale \(N\). To see this, write the left-hand
side as a convolution. By Plancherel and Hölder,
\begin{align}
    \|F*G\|_{\ell^2}^2
    &=
    \int_{\mathbb T} |\widehat F(\xi)|^2|\widehat G(\xi)|^2\,d\xi
    \notag\\
    &\leq
    \bigg(\int_{\mathbb T}|\widehat F(\xi)|^4\,d\xi\bigg)^{1/2}
    \bigg(\int_{\mathbb T}|\widehat G(\xi)|^4\,d\xi\bigg)^{1/2}
    \notag\\
    &=
    \|F\|_{U^2(\mathbb Z)}^2\|G\|_{U^2(\mathbb Z)}^2.
\end{align}
After inserting the normalizing factors coming from \(\mathbb E_n\) and
\(L^2_x\), this is precisely \eqref{e:U2-conv}.

Applying \eqref{e:U2-conv} to
\[
    F=\Delta_h w,
    \qquad
    G=\Delta_{-h}f,
\]
we obtain
\begin{align}
    \Big\|
    \mathbb E_{n \in [N]}
    \Delta_h w(n)\,
    \Delta_{-h} f(x-n)
    \Big\|_{L^2_x([-N,2N])}^2
    \lesssim
    \|\Delta_h w\|_{U^2([N])}^2
    \|\Delta_{-h}f\|_{U^2([-N,2N])}^2.
\end{align}
Substituting this into \eqref{e:xh-average} gives
\begin{align}
    \|A_\theta f\|_{L^4([-N,2N])}^4
    &\lesssim
    \mathbb E_{|h|\leq N}
    \|\Delta_h w\|_{U^2([N])}^2
    \|\Delta_{-h}f\|_{U^2([-N,2N])}^2.
\end{align}
By Cauchy--Schwarz in \(h\),
\begin{align}
    \|A_\theta f\|_{L^4([-N,2N])}^4
    &\lesssim
    \bigg(
    \mathbb E_{|h|\leq N}
    \|\Delta_h w\|_{U^2([N])}^4
    \bigg)^{1/2}
    \bigg(
    \mathbb E_{|h|\leq N}
    \|\Delta_{-h}f\|_{U^2([-N,2N])}^4
    \bigg)^{1/2}.
\end{align}
The result follows by the recursive identity for Gowers norms, so we conclude that
\begin{align}
    \|A_\theta f\|_{L^4([-N,2N])}^4
    &\lesssim
    \|w\|_{U^3([N])}^4
    \|f\|_{U^3([-N,2N])}^4.
\end{align}
Taking fourth roots proves \eqref{e:U3lin-target}, and since the bound is uniform in the linearizing selector \(\theta\), the supremum over \(\lambda\in\mathbb T\) follows.

To conclude the final point, we use Fourier inversion and convexity to bound the left-hand side
\begin{align}
&\int |\widehat{\phi}(\xi)| \cdot \| \sup_\lambda |\mathbb{E}_{[N]} f(x-n) (\text{Mod}_{\xi/N} w(n)) e(\lambda n)| \|_{L^2([-N,2N])} \ d\xi \\
& \lesssim \| \widehat{\phi} \|_{L^1(\mathbb{R})} \| w \|_{U^3([N])} \|f \|_{L^2([-N,2N])},
\end{align}
using the invariance of $U^3$ norms under multiplication by characters; the result follows by estimating 
\[ \| \widehat{\phi} \|_{L^1(\mathbb{R})} \lesssim \|\phi \|_{\mathcal{C}^2(\mathbb{R})}.\]
\end{proof}

\subsection{Variational Preliminaries}
We begin with the following cheap inequality from \cite{JSW}: if
\[ J = \bigcup_{n=1}^N J_n, \qquad J_n := J \cap [A_n,B_n] \]
is a disjoint partition, then we may bound
\begin{align}\label{e:split}
\mathcal{V}^r(a_\lambda : \lambda \in J) &\lesssim (\sum_n \mathcal{V}^r(a_\lambda : \lambda \in J_n)^r)^{1/r} + \mathcal{V}^r(a_{B_n} : n) \\
& \lesssim (\sum_n \mathcal{V}^r(a_\lambda : \lambda \in J_n)^r)^{1/r} + (\sum_{n \leq N} |a_{B_n}|^r)^{1/r}.
\end{align}

We next recall the following Lemma, which is by now standard in the field, see e.g.\ \cite{LL} or \cite[\S 4]{BOOK}.
\begin{lemma}\label{l:RMvariation}
Let \((a_j)_{0\leq j\leq 2^J}\) be a finite scalar sequence.  For
\(0\leq m\leq J\), set
\begin{align}
    S_m(a)^2
    :=
    \sum_{s=0}^{2^{J-m}-1}
    |a_{(s+1)2^m}-a_{s2^m}|^2.
\end{align}
Then
\begin{align}
    \mathcal V^2(a_j:0\leq j\leq 2^J)
    \lesssim
    \sum_{m=0}^J S_m(a).
\end{align}
\end{lemma}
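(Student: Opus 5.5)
We argue by dyadic (Rademacher--Menshov) chaining. Given an increasing sequence $0\le j_0<j_1<\dots<j_K\le 2^J$, the aim is to bound $\big(\sum_i |a_{j_{i+1}}-a_{j_i}|^2\big)^{1/2}$ by $\sum_m S_m(a)$ with an absolute constant. Since every such sum is dominated by the sum with a finer sequence, refining the subsequence can only help, so we may assume $j_0=0$ and $j_K=2^J$.

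For an integer $0\le j\le 2^J$ we write down its binary-greedy dyadic path from $0$: writing $j=\sum_{m\in E_j}2^m$ with $E_j$ listed in decreasing order, the partial sums give points $0=p_0<p_1<\dots<j$. Each step $p_{t}\to p_{t+1}$ is a dyadic interval of the form $[s2^m,(s+1)2^m]$, and the path uses at most one interval of each length $2^m$. For consecutive points $j_i<j_{i+1}$ let $m_i$ be the largest $m$ such that some dyadic interval $[s2^m,(s+1)2^m]$ lies inside $[j_i,j_{i+1}]$; equivalently, take the highest-level dyadic point $c_i$ in $(j_i,j_{i+1}]$. Then $[j_i,j_{i+1}]$ decomposes into a chain of dyadic intervals going down from $c_i$ to $j_{i+1}$ and a chain going up from $j_i$ to $c_i$. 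Each chain contains at most one (actually at most two, counting both sides) interval of each level $m\le m_i$. Consequently
\begin{align}
|a_{j_{i+1}}-a_{j_i}|\le \sum_{m\le m_i} \big(|\Delta_{I^{+}_{i,m}}a|+|\Delta_{I^{-}_{i,m}}a|\big),
\end{align}
where $\Delta_I a:=a_{\text{right}(I)}-a_{\text{left}(I)}$ and $I^{\pm}_{i,m}$ are dyadic intervals of length $2^m$ contained in $[j_i,j_{i+1}]$, absent when not used.

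Now take $\ell^2$ in $i$ and apply Minkowski's inequality in the sum over $m$:
\begin{align}
\Big(\sum_i|a_{j_{i+1}}-a_{j_i}|^2\Big)^{1/2}\le \sum_{m=0}^J\Big(\sum_i \big(|\Delta_{I^+_{i,m}}a|+|\Delta_{I^-_{i,m}}a|\big)^2\Big)^{1/2}.
\end{align}
Because the intervals $[j_i,j_{i+1}]$ have disjoint interiors, the dyadic intervals $I^{\pm}_{i,m}$ at a fixed level $m$ are pairwise distinct as $i$ varies and as the sign varies. The inner sum is therefore at most $2\sum_s|a_{(s+1)2^m}-a_{s2^m}|^2=2S_m(a)^2$. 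Taking the supremum over subsequences gives $\mathcal V^2\lesssim\sum_m S_m(a)$.

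The only point requiring care is the decomposition step: we must check that the greedy chains from $c_i$ toward each endpoint use each scale at most once. This is the usual binary-expansion fact, applied to $j_{i+1}-c_i$ and $c_i-j_i$. These differences are less than $2^{m_i+1}$ by maximality of $m_i$, and the chain intervals are aligned because $c_i$ is divisible by $2^{m_i}$. Everything else is Minkowski's inequality together with disjointness.
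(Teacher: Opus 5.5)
Your argument is correct and is the standard dyadic-decomposition (Lewko--Lewko) proof behind the references the paper cites in place of a proof. Each gap $[j_i,j_{i+1}]$ splits into at most two aligned dyadic intervals per scale, and Minkowski in $m$ together with disjointness at each fixed scale gives the bound with constant $\sqrt{2}$.

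Two slips, neither of which affects the argument. First, refining a partition can \emph{decrease} a sum of squared increments. Appending the endpoints $0$ and $2^J$ only adds terms, so that particular reduction is valid, though it is not needed anyway. Second, the bounds $c_i-j_i,\ j_{i+1}-c_i<2^{m_i+1}$ rely on $c_i$ having maximal $2$-adic valuation in $(j_i,j_{i+1}]$, not only on the maximality of $m_i$.
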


We next introduce a $U^3$ estimate to address variation of modulated averages:

\begin{lemma}\label{l:crudeV2lambda}
For every \(N\geq1\), every \(w:[N]\to\mathbb C\), and every finitely supported
\(f:\mathbb Z\to\mathbb C\),
\begin{align}
    \left\|
    \mathcal V^2(A_\lambda f : \lambda \in \mathbb{T})
    \right\|_{\ell^2(\mathbb Z)}
    \lesssim
    \log(2N)\,\|w\|_{L^2([N])}\,\|f\|_{\ell^2(\mathbb Z)},
\end{align}
where
\begin{align}
    A_\lambda f(x)
    :=
    \mathbb E_{n\in[N]} f(x-n)w(n)e(\lambda n).
\end{align}
\end{lemma}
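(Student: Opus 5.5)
The plan is to discretize the continuum $\lambda\in\mathbb T$ at the natural scale $1/N$, using the splitting inequality \eqref{e:split} with $r=2$. For each fixed $x$, the map $\lambda\mapsto A_\lambda f(x)$ is a trigonometric polynomial with frequencies in $[N]$, so it should not oscillate on scales finer than $1/N$. Take $2^J\approx N$ with $2^J\geq N$, and partition $\mathbb T$ into the arcs $J_s:=[s2^{-J},(s+1)2^{-J})$, $0\leq s<2^J$, with endpoints $\lambda_s:=s2^{-J}$. By \eqref{e:split} (with $r=2$),
\[
\mathcal V^2(A_\lambda f(x):\lambda\in\mathbb T)\lesssim \Big(\sum_s \mathcal V^2(A_\lambda f(x):\lambda\in J_s)^2\Big)^{1/2}+\mathcal V^2(A_{\lambda_s}f(x):0\leq s\leq 2^J).
\]
We bound the short (local) variation and the long (grid) variation separately in $\ell^2_x$.

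\emph{Short variation.} On each arc we use $\mathcal V^2\leq \mathcal V^1\leq\int_{J_s}|\partial_\lambda A_\lambda f(x)|\,d\lambda$. Cauchy--Schwarz on an arc of length $\approx 1/N$ then gives
\[
\sum_s\mathcal V^2(\cdots J_s)^2\lesssim N^{-1}\int_{\mathbb T}|\partial_\lambda A_\lambda f(x)|^2\,d\lambda .
\]
Now $\partial_\lambda A_\lambda f=f*\tilde k_\lambda$, where $\tilde k_\lambda(n)=2\pi i\,n\,N^{-1}w(n)e(\lambda n)\mathbf 1_{[N]}(n)$. Summing in $x$ and using Plancherel twice (first in $x$, then in $\lambda$ after the change of variables $\xi-\lambda$) yields
\[
\int_{\mathbb T}\|\partial_\lambda A_\lambda f\|_{\ell^2}^2\,d\lambda=\|f\|_{\ell^2}^2\,\|n\,N^{-1}w\mathbf 1_{[N]}\|_{\ell^2}^2\leq N\,\|w\|_{L^2([N])}^2\|f\|_{\ell^2}^2 .
\]
Hence the short variation contributes $\lesssim\|w\|_{L^2([N])}\|f\|_{\ell^2}$, with no logarithmic loss.

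\emph{Long variation.} Here we apply Lemma \ref{l:RMvariation} to the sequence $a_s=A_{\lambda_s}f(x)$, which gives
\[
\|\mathcal V^2(A_{\lambda_s}f)\|_{\ell^2}\lesssim\sum_{m=0}^J\|S_m\|_{\ell^2}.
\]
It therefore suffices to show that each $\|S_m\|_{\ell^2}\lesssim\|w\|_{L^2([N])}\|f\|_{\ell^2}$, since summing the $J+1\approx\log(2N)$ values of $m$ produces the stated logarithmic loss. To prove this, set $k_\lambda(n)=N^{-1}w(n)e(\lambda n)\mathbf 1_{[N]}(n)$. By Plancherel,
\[
\|S_m\|_{\ell^2}^2=\int|\widehat f(\xi)|^2\sum_{s}\big|\widehat{k}(\xi-\lambda_{(s+1)2^m})-\widehat k(\xi-\lambda_{s2^m})\big|^2\,d\xi,
\qquad k:=k_0 .
\]
The inner sum is at most $4\sup_\xi\sum_{t}|\widehat k(\xi-t2^{m-J})|^2$, which is a sum over points that are $2^{m-J}\geq 1/(2N)$-separated. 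The large sieve inequality bounds this by
\[
(N+2N)\sum_n|N^{-1}w(n)|^2\lesssim\|w\|_{L^2([N])}^2 ,
\]
uniformly in $\xi$ and $m$, which is exactly what is needed.

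\emph{Main obstacle.} The only delicate point is making the continuum-to-grid reduction rigorous: the partition must be compatible with \eqref{e:split}, and the large sieve must apply uniformly in the shift $\xi$ and the level $m$. Both follow because every grid used has spacing at least $1/(2N)$ and the kernel has frequency support of length $N$. Once that is in place, each $S_m$ costs $O(1)$, and the logarithm in the statement comes solely from the number of dyadic levels in Lemma \ref{l:RMvariation}.
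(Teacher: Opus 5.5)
Your proof is correct. For the coarse scales it coincides with the paper's: both apply Lemma \ref{l:RMvariation} on a dyadic grid and bound each level $S_m$ by $O(\|w\|_{L^2([N])}\|f\|_{\ell^2})$ via Plancherel. For the equally spaced grid $\xi+2^{m-J}\mathbb{Z}$, your large-sieve step is exactly the paper's Cauchy--Schwarz inside residue classes modulo $K=2^{J-m}$, so the large sieve is more than you need.

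The genuine difference is at scales finer than $1/N$. The paper never stops the dyadic tree. It proves the square-function bound \eqref{e:Dsfbound} for every $K$, and when $K>N$ it uses the cancellation $|e(n/K)-1|\lesssim N/K$ in consecutive differences to gain the factor $(N/K)^{1/2}$. The levels below $1/N$ therefore form a geometric series, and it then lets $J\to\infty$ using continuity in $\lambda$.

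You instead truncate at $2^{-J}\approx 1/N$ and peel off the short variation with \eqref{e:split}. You control it by $\mathcal V^1$, Cauchy--Schwarz on arcs of length at most $1/N$, and Parseval. That Parseval step can even be done pointwise in $x$, since $\lambda\mapsto A_\lambda f(x)$ is a trigonometric polynomial with frequencies in $[N]$. Your derivative bound is the continuum counterpart of the paper's $K>N$ estimate.

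What each route buys: yours avoids the limiting argument and makes transparent that the logarithm comes only from the $\approx\log_2 N$ coarse levels, the short variation costing $O(1)$. The paper's gets everything from one square-function estimate uniform in the scale $K$, without invoking \eqref{e:split} or differentiability in $\lambda$.

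A harmless slip: your display for $\int_{\mathbb T}\|\partial_\lambda A_\lambda f\|_{\ell^2}^2\,d\lambda$ drops the factor $4\pi^2$.
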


\begin{proof}
We first prove a square-function estimate for dyadic modulation increments.  Let
\(K\geq1\) and \(\alpha\in\mathbb T\); for \(0\leq s<K\), define
\begin{align}
    D_{K,\alpha,s}f
    :=
    A_{\alpha+(s+1)/K}f-A_{\alpha+s/K}f.
\end{align}
We claim that
\begin{align}\label{e:Dsfbound}
    \Big\|
    \big(\sum_{s=0}^{K-1}|D_{K,\alpha,s}f|^2\big)^{1/2}
    \Big\|_{\ell^2}
    \lesssim
    \min\left\{1,(N/K)^{1/2}\right\}
    \|w\|_{L^2([N])}\|f\|_{\ell^2},
\end{align}
with an absolute implicit constant, uniformly in \(K\) and \(\alpha\).

If we define
\begin{align}
    m(\xi):=\frac1N\sum_{n=1}^N w(n)e(-\xi n),
\end{align}
then \(A_\lambda\) has Fourier multiplier \(m(\xi-\lambda)\), while the multiplier of
\(D_{K,\alpha,s}\) is
\begin{align}
    &m(\xi-\alpha-(s+1)/K)-m(\xi-\alpha-s/K) \\
    & \qquad = \frac1N\sum_{n=1}^N
    w(n)e((\alpha-\xi)n)e(sn/K)(e(n/K)-1),
\end{align}
so Plancherel and orthogonality \(\mod K\) allow us to express
\begin{align}
    &\sum_{s=0}^{K-1}
    \left|
    m(\xi-\alpha-(s+1)/K)-m(\xi-\alpha-s/K)
    \right|^2\notag\\
    &\qquad=
    K\sum_{l \in\mathbb Z/K\mathbb Z}
    \big|
    \frac{1}{N} \sum_{\substack{1\leq n\leq N\\ n\equiv l \mod K}}
    w(n) e((\alpha-\xi)n)(e(n/K)-1)
    \big|^2.
\end{align}
If \(K\leq N\), then each residue class modulo \(K\) contains at most \(2N/K\)
integers in \([N]\), and 
\[ |e(n/K)-1|\leq2;\]  Cauchy-Schwarz inside each residue class gives
\begin{align}
    \sum_{s=0}^{K-1}
    \left|
    m(\xi-\alpha-(s+1)/K)-m(\xi-\alpha-s/K)
    \right|^2
    \lesssim
    \frac1N\sum_{n=1}^N|w(n)|^2.
\end{align}
If \(K>N\), each residue class contains at most one integer in \([N]\).  Since
\[ |e(n/K)-1|\lesssim  N/K,\]
we get
\begin{align}
    \sum_{s=0}^{K-1}
    \left|
    m(\xi-\alpha-(s+1)/K)-m(\xi-\alpha-s/K)
    \right|^2
    \lesssim
    \frac NK\cdot \frac1N\sum_{n=1}^N|w(n)|^2.
\end{align}
The two cases prove the pointwise multiplier square-function bound
\begin{align}\label{e:multsf}
    &\sup_{\xi\in\mathbb T}
    \big(
    \sum_{s=0}^{K-1}
    \left|
    m(\xi-\alpha-(s+1)/K)-m(\xi-\alpha-s/K)
    \right|^2
    \big)^{1/2} \\
& \qquad     \lesssim
    \min\left\{1,(N/K)^{1/2}\right\}
    \|w\|_{L^2([N])},
\end{align}
from which \eqref{e:Dsfbound} follows by Plancherel.

We now prove the variation estimate.  First restrict \(\lambda\) to the dyadic grid
\[ \{\frac{j}{2^J} :0\leq j\leq2^J\}\]
and apply Lemma \ref{l:RMvariation} pointwise in
\(x\) to the sequence
\begin{align}
    a_j(x):=A_{j2^{-J}}f(x);
\end{align}
taking \(\ell^2_x\)-norms yields
\begin{align}
    \left\|\mathcal V^2_j(A_{j2^{-J}}f:0\leq j\leq2^J)\right\|_{\ell^2_x}
    &\lesssim
    \sum_{m=0}^J
    \Big\|
    \big(
    \sum_{s=0}^{2^{J-m}-1}
    |A_{(s+1)/2^{J-m}}f-A_{s/2^{J-m}}f|^2
    \big)^{1/2}
    \Big\|_{\ell^2_x}.
\end{align}
Putting \(K=2^{J-m}\) and using \eqref{e:Dsfbound} with \(\alpha=0\) yields the estimate
\begin{align}
    \left\|\mathcal V^2_j(A_{j2^{-J}}f:0\leq j\leq2^J)\right\|_{\ell^2}
    &\lesssim
    \|w\|_{L^2([N])}\|f\|_{\ell^2}
    \sum_{\substack{K=2^\ell\\0\leq \ell\leq J}}
    \min\left\{1,(N/K)^{1/2}\right\}\\
    &\lesssim
    \log(2N)\,\|w\|_{L^2([N])}\|f\|_{\ell^2}.
\end{align}
Since this estimate is uniform in \(J\) and \(f\) is finitely supported, \(\lambda\mapsto
A_\lambda f(x)\) is continuous for every \(x\), so the result follows by a limiting argument.
\end{proof}
The following corollary follows immediately by interpolation.
\begin{cor}[$U^3$-control of modulation variation]\label{c:U3modvariation}
The following bound holds for each $r \geq 2$:
\begin{align}\label{e:U3varlambda}
    \left\|\mathcal V^r(A_\lambda f : \lambda \in \mathbb{T}) \right\|_{\ell^2(\mathbb Z)}
    \lesssim_r
    (\log(2N))^{2/r}
    \|w\|_{U^3([N])}^{1-2/r}
    \|w\|_{L^2([N])}^{2/r}
    \|f\|_{\ell^2(\mathbb Z)}.
\end{align}
\end{cor}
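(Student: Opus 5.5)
The plan is to interpolate pointwise in $x$ between two endpoints: the trivial $r=\infty$ estimate and the $r=2$ estimate of Lemma \ref{l:crudeV2lambda}. The basic inequality, for a scalar family $\{a_\lambda\}$ and $2\le r\le\infty$, is
\begin{align}
    \mathcal V^r(a_\lambda)\le \mathcal V^2(a_\lambda)^{2/r}\,\mathcal V^\infty(a_\lambda)^{1-2/r}.
\end{align}
This holds because $\sum_i|a_{\lambda_i}-a_{\lambda_{i+1}}|^r\le \sup_i|a_{\lambda_i}-a_{\lambda_{i+1}}|^{r-2}\sum_i|a_{\lambda_i}-a_{\lambda_{i+1}}|^2$. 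Since $\mathcal V^\infty(A_\lambda f)(x)\le 2\sup_\lambda|A_\lambda f(x)|$, we get the pointwise bound
\begin{align}
    \mathcal V^r(A_\lambda f)\lesssim \mathcal V^2(A_\lambda f)^{2/r}\,\big(\sup_\lambda|A_\lambda f|\big)^{1-2/r}.
\end{align}
Taking $\ell^2$ norms and applying H\"older with exponents $r/2$ and $r/(r-2)$ gives
\begin{align}
    \|\mathcal V^r(A_\lambda f)\|_{\ell^2}\lesssim \|\mathcal V^2(A_\lambda f)\|_{\ell^2}^{2/r}\,\big\|\sup_\lambda|A_\lambda f|\big\|_{\ell^2}^{1-2/r}.
\end{align}

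The first factor is controlled directly by Lemma \ref{l:crudeV2lambda}, contributing $(\log 2N)^{2/r}\|w\|_{L^2([N])}^{2/r}\|f\|_{\ell^2}^{2/r}$.

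For the second factor I would use Lemma \ref{e:U3maxmod}, namely the $L^2$ form $\|\sup_\lambda|A_\lambda f|\|_{L^2([-N,2N])}\lesssim \|w\|_{U^3([N])}\|f\|_{L^2([-N,2N])}$, which is stated with local normalizations. The step that needs care is globalizing it to $\ell^2(\mathbb Z)$, since $f$ is only assumed finitely supported.

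1. Partition $\mathbb Z$ into intervals $I_j=[jN,(j+1)N)$.
2. For $x\in I_j$, $A_\lambda f(x)$ only involves $f$ on $x-[N]\subset I_{j-1}\cup I_j$. So it equals $A_\lambda(f\mathbf 1_{\tilde I_j})(x)$, where $\tilde I_j$ is a window of length $3N$ around $I_j$.
3. By translation invariance, apply the local lemma on each translate of $[-N,2N]$ that contains $I_j$. After undoing the normalizations, the factor $|[-N,2N]|$ appears on both sides and cancels.
4. Sum over $j$, using the bounded overlap of the windows $\tilde I_j$.

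This yields $\|\sup_\lambda|A_\lambda f|\|_{\ell^2(\mathbb Z)}\lesssim \|w\|_{U^3([N])}\|f\|_{\ell^2(\mathbb Z)}$.

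Multiplying the two factors gives exactly \eqref{e:U3varlambda}: the exponents of $\|f\|_{\ell^2}$ add to $2/r+(1-2/r)=1$, and $\|w\|_{U^3([N])}$ carries the power $1-2/r$. I do not expect a serious obstacle. The only subtle point is the localization bookkeeping above, and in particular checking that the measurability and limiting issues in Lemma \ref{l:crudeV2lambda} (continuity in $\lambda$) transfer unchanged to the $\mathcal V^r$ setting, which they do since the interpolation is pointwise.
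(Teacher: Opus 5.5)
Your proposal is correct and is exactly the interpolation the paper means when it says the corollary ``follows immediately by interpolation.'' You use the pointwise bound $\mathcal V^r\le(\mathcal V^2)^{2/r}(\mathcal V^\infty)^{1-2/r}$ and H\"older, with Lemma \ref{l:crudeV2lambda} for the $\mathcal V^2$ factor and the $L^2$ form of Lemma \ref{e:U3maxmod} for the supremum; your localization to windows of length $3N$ correctly supplies the passage to $\ell^2(\mathbb Z)$ that the paper leaves implicit.
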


Our next lemma serves as the $\ell^p$ counterpart to Lemma \ref{l:crudeV2lambda}.

\begin{lemma}
\label{l:crude-lambda-var-scale}
Let \(r>2\), and let \(r':=r/(r-1)\).  Let \(N_0\geq2\), and suppose that $w$ is supported in \(\{|n|\leq N_0\}\). If we set
\begin{align}
    T_\lambda f(x) := T_\lambda^w f(x) := \sum_n f(x-n) w(n) e(\lambda n),
\end{align}
then, for every \(p\geq r'\),
\[
    \left\|
    \mathcal V^r (T_\lambda f : \lambda \in \mathbb{T})
    \right\|_{\ell^p(\mathbb Z)}
    \lesssim_{p,r}
    N_0^{1/r}\|w\|_{\ell^{r'}(\mathbb Z)}
    \|f\|_{\ell^p(\mathbb Z)},
\]
while for \(1\leq p<r'\),
\[
    \left\|
    \mathcal V^r(T_\lambda f : \lambda \in \mathbb{T})
    \right\|_{\ell^p(\mathbb Z)}
    \lesssim_{p,r}
    N_0^{1/r}\|w\|_{\ell^{p}(\mathbb Z)}
    \|f\|_{\ell^p(\mathbb Z)}.
\]
\end{lemma}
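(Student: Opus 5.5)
The plan is to prove a pointwise estimate in $x$ first, and then integrate it using Young's inequality. Fix $x$ and set $g_x(n):=f(x-n)w(n)$. This is supported in $\{|n|\leq N_0\}$, and
\[
    T_\lambda f(x)=\sum_{|n|\leq N_0} g_x(n)e(\lambda n)=:P_x(\lambda)
\]
is a trigonometric polynomial of degree at most $N_0$ in $\lambda$. The key claim is that, for every trigonometric polynomial $P$ of degree $\leq N_0$ and every $r\geq 2$,
\begin{align}\label{e:polyvar-plan}
    \mathcal V^r(P(\lambda):\lambda\in\mathbb T)\lesssim_r N_0^{1/r}\|P\|_{L^r(\mathbb T)} .
\end{align}
Combined with Hausdorff--Young ($r\geq2$), this gives the pointwise bound
\[
    \mathcal V^r(T_\lambda f:\lambda)(x)\lesssim_r N_0^{1/r}\Big(\sum_n |f(x-n)|^{r'}|w(n)|^{r'}\Big)^{1/r'} .
\]

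To prove \eqref{e:polyvar-plan}, I would partition $\mathbb T$ into $N_0$ consecutive arcs $J_k=[A_k,B_k]$, each of length $1/N_0$, and apply the splitting inequality \eqref{e:split}. On each arc, $\mathcal V^r$ is dominated by the total variation, so
\[
    \mathcal V^r(P:\lambda\in J_k)\leq\int_{J_k}|P'|\leq N_0^{-1}\sup_{J_k}|P'| .
\]
Together with the endpoint term, this gives
\[
    \mathcal V^r(P)\lesssim \Big(\sum_k \big(N_0^{-1}\sup_{J_k}|P'|\big)^r\Big)^{1/r}+\Big(\sum_k \sup_{J_k}|P|^r\Big)^{1/r}.
\]
Next I would use the Marcinkiewicz--Zygmund / Plancherel--P\'olya sampling inequality for polynomials of degree $\lesssim N_0$ on arcs of length $1/N_0$:
\[
    \Big(\sum_k\sup_{J_k}|Q|^r\Big)^{1/r}\lesssim N_0^{1/r}\|Q\|_{L^r(\mathbb T)} .
\]
One way to see this is to write $Q=Q*V_{N_0}$ with a de la Vall\'ee Poussin kernel, whose sup over an arc is controlled by a rapidly decaying average. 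Applying the sampling inequality to $Q=P$ and to $Q=P'$, and then Bernstein's inequality $\|P'\|_{L^r}\lesssim N_0\|P\|_{L^r}$, yields \eqref{e:polyvar-plan}. This sampling-plus-Bernstein step is the only nontrivial input, and it is where I expect the main technical care to lie, although it is classical.

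Finally I would take $\ell^p$ norms.
\begin{itemize}
\item For $p\geq r'$, raise the pointwise bound to the power $r'$. The resulting quantity is the convolution $|f|^{r'}*|w|^{r'}$, which by Young's inequality satisfies $\||f|^{r'}*|w|^{r'}\|_{\ell^{p/r'}}\leq \|f\|_{\ell^p}^{r'}\|w\|_{\ell^{r'}}^{r'}$, since $p/r'\geq1$. Taking $r'$-th roots gives the first estimate.
\item For $1\leq p<r'$, use the nesting $\ell^p\subset\ell^{r'}$ pointwise in $n$, namely $\big(\sum_n a_n^{r'}\big)^{1/r'}\leq\big(\sum_n a_n^{p}\big)^{1/p}$. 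This bounds the right-hand side by $N_0^{1/r}\big(|f|^p*|w|^p\big)^{1/p}(x)$. Young's inequality in $\ell^1$ then gives $N_0^{1/r}\|w\|_{\ell^p}\|f\|_{\ell^p}$.
\end{itemize}
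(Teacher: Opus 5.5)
Your proposal is correct. Its skeleton matches the paper's: partition into $N_0$ arcs of length $1/N_0$, apply \eqref{e:split}, prove the scalar bound \eqref{e:embed}, then integrate in $x$. Your Young's-inequality step is the paper's Minkowski step, and your $\ell^p\subset\ell^{r'}$ nesting is its subadditivity step.

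The difference lies in how you prove the scalar bound. The paper never takes suprema over arcs. H\"older on each arc gives $\int_{J_k}|P'|\leq N_0^{-1/r'}\big(\int_{J_k}|P'|^r\big)^{1/r}$, so the in-arc terms sum to $N_0^{-1/r'}\|P'\|_{L^r(\mathbb T)}$. This is $\lesssim N_0^{1/r}\|c\|_{\ell^{r'}}$ by Hausdorff--Young applied to the coefficients $2\pi i n c_n$, $|n|\leq N_0$. The endpoint term involves only the grid values $P(j/N_0)$, and is handled by discrete Hausdorff--Young on $\mathbb Z/N_0$ after folding the frequencies modulo $N_0$.

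You instead use a Plancherel--P\'olya sampling inequality (your de la Vall\'ee Poussin argument for it is sound) together with Bernstein's inequality. This gives the stronger, coefficient-free estimate $\mathcal V^r(P)\lesssim_r N_0^{1/r}\|P\|_{L^r(\mathbb T)}$ for all $1\leq r<\infty$. It does not depend on where the partition points sit, and $r\geq2$ is needed only for the final Hausdorff--Young step.

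The paper's route is more elementary, and the step you flag as the only nontrivial input can be bypassed. H\"older replaces your sup bound on $P'$, and the grid structure of the endpoints lets discrete Hausdorff--Young replace sampling. Only the $\|c\|_{\ell^{r'}}$ form is used downstream, so the extra strength of your intermediate bound is not needed here.
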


\begin{proof}
We first record a scalar estimate: if
\[
    P(\lambda):=\sum_{|n|\leq N_0} c_n e(\lambda n),
\]
then we may bound
\begin{align}\label{e:embed}
    \mathcal V^r(P(\lambda) : \lambda \in \mathbb{T})
    \lesssim_r
    N_0^{1/r}
    \big(\sum_{|n|\leq N_0}|c_n|^{r'}\big)^{1/r'}.
\end{align}
To see this, partition \([0,1]\) into intervals
\[
    I_j:=[\frac{j}{N_0},\frac{j+1}{N_0}),
    \qquad 0\leq j<N_0,
\]
and apply \eqref{e:split}: since \(r>1\), we may apply H\"{o}lder and bound
\[
    \mathcal V^r(P(\lambda): \lambda \in I_j)\leq \int_{I_j}|P'(\lambda)|\,d\lambda \leq
    |I_j|^{1/r'}
    \big(\int_{I_j}|P'(\lambda)|^r\,d\lambda\big)^{1/r},
\]
so 
\[
    \big(
    \sum_j \mathcal V^r(P(\lambda): \lambda \in I_j)^r
    \big)^{1/r}
    \leq
    N_0^{-1/r'}\|P'\|_{L^r(\mathbb T)} \lesssim N_0^{1/r} \| c \|_{\ell^{r'}}
\]
by Hausdorff-Young; we similarly apply the discrete Hausdorff-Young inequality on \(\mathbb Z/N_0\) -- after separating the
finitely many residue classes of the frequencies \(|n|\leq N_0\) -- to address the endpoint term,
\[
    \big(\sum_{j=0}^{N_0-1}|P(j/N_0)|^r\big)^{1/r}
    \lesssim_r
    N_0^{1/r}
    \|c\|_{\ell^{r'}},
\]
completing the reduction.

We now apply \eqref{e:embed} pointwise in \(x\) with
\[
    c_n=w(n)f(x-n),
\]
obtaining
\[
    \mathcal V^r(T_\lambda f(x) : \lambda \in \mathbb{T})
    \lesssim_r
    N_0^{1/r}
    \left(
    \sum_n |w(n)|^{r'}|f(x-n)|^{r'}
    \right)^{1/r'};
\]
if \(p\geq r'\), then \(p/r'\geq1\), so Minkowski gives
\begin{align}
    \Big\|
    \big(
    \sum_n |w(n)|^{r'}|f(x-n)|^{r'}
    \big)^{1/r'}
    \Big\|_{\ell^p_x}
    &=
    \big\|
    \sum_n |w(n)|^{r'}|f(x-n)|^{r'}
    \big\|_{\ell^{p/r'}_x}^{1/r'}  \\
    &\leq
    \big(
    \sum_n |w(n)|^{r'}\|f(\cdot-n)\|_{\ell^p}^{r'}
    \big)^{1/r'}  \\
    &=
    \|w\|_{\ell^{r'}}\|f\|_{\ell^p},
\end{align}
proving the first estimate. In the opposite case, if \(1\leq p<r'\), then by the subadditivity of \(t\mapsto t^{p/r'}\), we bound
\begin{align}
    \Big\|
    \big(
    \sum_n |w(n)|^{r'}|f(x-n)|^{r'}
    \big)^{1/r'}
    \Big\|_{\ell^p_x}^p
    &\leq
    \sum_x\sum_n |w(n)|^p|f(x-n)|^p   \\
    &=
    \|w\|_{\ell^p}^p\|f\|_{\ell^p}^p,
\end{align}
proving the second estimate.
\end{proof}

With the above in mind, we begin our arguments proper.

\section{Reductions}\label{s:red}
Fix \(r>2\); we here reduce Theorem \ref{t:main} to establishing more arithmetically tractable estimate. To do so, for each $1 < p < \infty$, we choose a constant $D_0 := D_0(p,r)$ sufficiently large, and then choose $0 < \epsilon \ll_{p,r} 1$ sufficiently small; in particular, in the periodic section we will choose
\[
    \kappa=1/D_0-\epsilon,
\]
see \eqref{e:kappa} below.  We set
\begin{align}\label{e:KQ}
    K_Q(n):=K_{Q,D_0}(n)
    :=
    \varphi_{N(Q)}*
    \sum_{Q^{1/D_0}\leq k}\psi_k(n),
\end{align}
where \(\varphi\) is a Schwartz function whose Fourier transform is supported
in a sufficiently small fixed neighborhood of the origin, and
\[
    N(Q):=\exp(Q^{1/D_0-\epsilon/10}).
\]
With this definition in mind, define the \(Q\)-denominator variational block
\begin{align}\label{e:CQvar}
    \mathcal V^{Q,r}f(x)
    :=
    \mathcal V^r
    \big(
    \sum_n f(x-n)w_Q(n)K_Q(n)e(\lambda n) : \lambda \in \mathbb{T}
    \big); 
\end{align}
in the limiting $r \to \infty$ case, we define
\begin{align}
    \mathcal C^{Q}f(x)
    :=
    \sup_{\lambda \in \mathbb{T}}
    \big|
    \sum_n f(x-n)w_Q(n)K_Q(n)e(\lambda n)\big|.
\end{align}

Our first order of business is to reduce to estimates for the operators
\(\mathcal V^{Q,r}\).

\begin{proposition}\label{p:red-var}
Fix \(r>2\), and set \(r':=r/(r-1)\).  Let \(p>r'\).  To prove the
\(r\)-variational analogue
\[
        \|\mathcal V_{\mathbb P}^{r}f\|_{\ell^p}
        \lesssim_{p,r}
        \|f\|_{\ell^p},
\]
it suffices to prove that for each dyadic \(Q\geq1\) there exists \(c_{p,r}>0\)
such that
\begin{align}\label{e:red-var}
        \|\mathcal V^{Q,r}f\|_{\ell^p}
        \lesssim_{p,r}
        Q^{-c_{p,r}}\|f\|_{\ell^p}.
\end{align}
\end{proposition}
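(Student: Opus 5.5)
We split the kernel \(w(n)/n\) into dyadic scales, \(w(n)/n=\sum_{k\geq1}w(n)\psi_k(n)\) for \(n\neq0\), and compare each single-scale piece with a major-arc model. For each \(k\) let \(Q_k\) be the largest dyadic \(Q\) with \(Q^{1/D_0}\leq k\), i.e.\ \(Q_k\approx k^{D_0}\). By the circle-method (higher-order Fourier) decomposition of the von Mangoldt function at scale \(2^k\), we write
\begin{align}
 w(n)\psi_k(n)=\Big(\sum_{Q\leq Q_k} w_Q(n)\Big)\psi_k(n)+E_k(n)\psi_k(n).
\end{align}
Here \(E_k\) is uniform: \(\|E_k\|_{U^3([2^k])}\lesssim k^{-A}\) for every \(A\) once \(D_0\) is large. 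This comes from the Green--Tao type inverse theory and the Siegel--Walfisz-type Gowers-norm bounds for \(\Lambda\) minus its Cramér--Granville model, with moduli up to \(Q_k\approx k^{D_0}\). The truncation \(\sum_{Q\le Q_k}\) is exactly \(k\ge Q^{1/D_0}\), which is why \(K_Q\) carries \(\sum_{Q^{1/D_0}\le k}\psi_k\). The smoothing \(\varphi_{N(Q)}*\) differs from the identity only at scales \(2^k\lesssim N(Q)=\exp(Q^{1/D_0-\epsilon/10})\). At those scales, \(k\lesssim Q^{1/D_0-\epsilon/10}\) forces \(k^{D_0}\ll Q\), so the mismatch involves only \(Q\gg Q_k\), i.e.\ terms absent from the main part. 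The convolution with \(\varphi_{N(Q)}\) essentially preserves the \(w_Q\)-structure, since \(\widehat\varphi\) has small support and the Ramanujan frequencies are separated by \(\gtrsim Q^{-2}\gg N(Q)^{-1}\). The commutator error is an operator with \(\ell^1\)-summable kernel per scale, gaining a factor \(Q^{2}2^{-k}N(Q)\)-type decay that we sum directly.

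This gives the pointwise bound
\begin{align}
\mathcal V^r_{\mathbb P}f\le \sum_Q \mathcal V^{Q,r}f+\mathcal V^r\Big(\sum_k (E_k\psi_k*\mathrm{Mod})f\Big)+\text{(smoothing errors)}.
\end{align}
The main terms are summed using the hypothesis \eqref{e:red-var} over dyadic \(Q\) via \(\sum_Q Q^{-c_{p,r}}<\infty\).

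For the minor-arc errors we use the triangle inequality in \(k\) (variation is subadditive) and show that each scale contributes \(\lesssim k^{-2}\|f\|_{\ell^p}\). On \(\ell^2\), Corollary \ref{c:U3modvariation} gives the bound
\begin{align}
(\log 2^k)^{2/r}\,\|E_k\|_{U^3}^{1-2/r}\,\|E_k\|_{L^2}^{2/r}\lesssim k^{2/r}\,k^{-A(1-2/r)}\,(\log 2^k)^{O(1)},
\end{align}
which is summable, since \(\|E_k\|_{L^2([2^k])}\lesssim k^{1/2}\) and \(A\) is large. On \(\ell^{p_0}\) with \(p_0\) slightly above \(r'\) (or at any \(p_0\in(1,\infty)\) for the supremum version), Lemma \ref{l:crude-lambda-var-scale} with \(N_0=2^k\) and the \(L^1\)-normalisation \(2^{-k}\) gives a bound polynomial in \(k\): \(2^{k/r}\|E_k\psi_k\|_{\ell^{r'}}\lesssim 2^{k/r}2^{-k}2^{k/r'}k^{O(1)}=k^{O(1)}\). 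Interpolating between the two estimates yields \(k^{-2}\) decay on \(\ell^p\) for \(p\) in a range \((\mathbf c(r),\mathbf C(r))\) that widens as \(A\to\infty\). This forces \(D_0=D_0(p,r)\), consistent with the setup.

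The main obstacle is the interpolation at \(p\) near \(r'\). The crude \(\ell^{p}\) bound from Lemma \ref{l:crude-lambda-var-scale} is only polynomial in \(k\) at \(p>r'\), and near \(r'\) the interpolation parameter degenerates, so the \(\ell^2\) gain \(k^{-A(1-2/r)}\) must overwhelm an arbitrarily large power loss. I would handle this by taking \(A\) huge depending on \(p,r\) and accepting the range restriction \(\mathbf c(r)>r'\), with \(\mathbf c(r)\to1\) as \(r\to\infty\).
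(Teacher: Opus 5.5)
Your proposal is correct and is essentially the paper's argument: the same scale decomposition, the same $\ell^2$ bound via Corollary \ref{c:U3modvariation} interpolated against the crude bound of Lemma \ref{l:crude-lambda-var-scale}, and the same regrouping of the main term as $\sum_Q w_Q\sum_{k\ge Q^{1/D_0}}\psi_k$. The differences are minor: you merge the paper's residual $w-w_{\le \exp(k^{1/10})}$ and its intermediate denominators $k^{D_0}<Q\le \exp(k^{1/10})$ into one error $E_k$, whose $U^3$ bound then rests on $\|w_Q\|_{U^3([2^k])}\lesssim Q^{-3/8+o(1)}$, so $A$ is only of order $D_0$; and you treat the $\varphi_{N(Q)}$-smoothing error by a physical-space kernel bound rather than a total-variation multiplier bound, which works because the smoothing acts on the kernel before multiplication by $w_Q$ (so no commutator arises) and the factor $|n|^{1/r}$ from $\mathcal V^r(e(\lambda n):\lambda\in\mathbb T)$ is absorbed by the gain $N(Q)2^{-k}$ for $k\ge Q^{1/D_0}$. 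One correction to your final paragraph: this reduction holds for every $p>r'$ once $D_0=D_0(p,r)$ is large, and the restriction $\mathbf c(r)>r'$ in Theorem \ref{t:main} comes from the $Q^{1/r}$ loss in Lemma \ref{l:red-var-l2}, not from this step.
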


\begin{proof}
Since
\[
    \sum_{k\geq1}\psi_k(n)=\frac1n,
    \qquad |n|\geq1,
\]
we have
\[
    \mathcal V_{\mathbb P}^{r}f(x)
    =
    \mathcal{V}^r\big(
    \sum_n f(x-n)\sum_{k\geq 1} w(n)\psi_k(n)e(\lambda n):
    \lambda\in\mathbb T
    \big),
\]
so by the triangle inequality for \(r\)-variation,
\begin{align}
    \mathcal V_{\mathbb P}^{r}f(x)
    &\leq
    \mathcal{V}^r\big(
    \sum_n f(x-n)\sum_{k \geq 1} w_{\leq M(k)}(n)\psi_k(n)e(\lambda n):
    \lambda\in\mathbb T
    \big)
    \notag\\
    &\quad+
    \sum_{k\geq1}
    \mathcal{V}^r\big(
    \sum_n f(x-n)(w-w_{\leq M(k)})(n)\psi_k(n)e(\lambda n):
    \lambda\in\mathbb T
    \big),
\end{align}
where
\[
    M(k):=\exp(k^{1/10}),
    \qquad
    w_{\leq M(k)}:=\sum_{L\leq M(k)}w_L,
\]
with \(L\) dyadic.

To estimate the residual term, use \cite[Proposition 3.1]{F+} to bound
\begin{align}\label{e:minor-U3-var-red}
    \|w-w_{\leq M(k)}\|_{U^3([2^k])}
    \lesssim_A k^{-A}
\end{align}
for every \(A<\infty\), and then apply the smooth-cutoff form of Corollary
\ref{c:U3modvariation} at scale \(2^k\) to bound
\begin{align}\label{e:local-U3-var-red}
    &\Big\|
    \mathcal{V}^r\big(
    \sum_n f(x-n)u(n)\psi_k(n)e(\lambda n):\lambda\in\mathbb T
    \big)
    \Big\|_{\ell^2}
    \notag\\
    &\qquad\lesssim_{r}
    k^{2/r}
    \|u\|_{U^3([2^k])}^{1-2/r}
    \|u\|_{L^2([2^k])}^{2/r}
    \|f\|_{\ell^2},
\end{align}
by \eqref{e:U3varlambda} and convexity.

Taking \(u=w-w_{\leq M(k)}\), using \eqref{e:minor-U3-var-red}, and using the crude local moment bound
\[
    \|w-w_{\leq M(k)}\|_{L^2([-2^k,2^k])}
    \lesssim k^{O(1)},
\]
see \cite[Lemma 3.1]{FKT1}, we obtain
\begin{align}\label{e:minor-var-L2-red}
    &\Big\|
    \mathcal{V}^r\big(
    \sum_n f(x-n)(w-w_{\leq M(k)})(n)\psi_k(n)e(\lambda n):
    \lambda\in\mathbb T
    \big)
    \Big\|_{\ell^2}
    \notag\\
    &\qquad\lesssim_{A,r}
    k^{-A(1-2/r)+O_r(1)}
    \|f\|_{\ell^2}.
\end{align}

We next record the crude bound at the auxiliary exponent \(p > s \geq r'\). Abbreviate
\[
    u_k(n):=(w-w_{\leq M(k)})(n);
\]
by Lemma \ref{l:crude-lambda-var-scale}, with \(N_0\approx 2^k\), 
\begin{align}
    &\Big\|
    \mathcal{V}^r\big(
    \sum_n f(x-n)u_k(n)\psi_k(n)e(\lambda n):\lambda\in\mathbb T
    \big)
    \Big\|_{\ell^s}
    \notag\\
    &\qquad\lesssim_{s,r}
    2^{k/r}\|u_k\psi_k\|_{\ell^{r'}}\|f\|_{\ell^s} \\
    & \qquad \lesssim \big(
    \mathbb E_{|n|\approx 2^k}|u_k(n)|^{r'}
    \big)^{1/r'} \|f\|_{\ell^s}.
\end{align}
Since \(|w(n)|\lesssim k\) on \(|n|\approx2^k\), and
by Minkowski's inequality and \cite[Lemma 3.1]{FKT1},
\begin{align}
    \big(
    \mathbb E_{|n|\approx 2^k}|w_{\leq M(k)}(n)|^{r'}
    \big)^{1/r'}
    &\leq
    \sum_{L\leq M(k)}
    \big(
    \mathbb E_{|n|\approx 2^k}|w_L(n)|^{r'}
    \big)^{1/r'}
    \notag\\
    &\lesssim_r
    \sum_{L\leq M(k)}(\log L)^{O_r(1)}
    \notag\\
    &\lesssim_r
    k^{O_r(1)},
\end{align}
we conclude
\begin{align}\label{e:minor-var-crude-red}
    &\Big\|
    \mathcal{V}^r\big(
    \sum_n f(x-n)(w-w_{\leq M(k)})(n)\psi_k(n)e(\lambda n):
    \lambda\in\mathbb T
    \big)
    \Big\|_{\ell^s}
    \notag\\
    &\qquad\lesssim_{s,r}
    k^{O_r(1)}
    \|f\|_{\ell^s}.
\end{align}

Interpolating \eqref{e:minor-var-L2-red} and \eqref{e:minor-var-crude-red} and choosing $A = A(p,r)$ sufficiently large allows us to sum
\begin{align}
    &
    \sum_{k\geq1} \Big\|
    \mathcal{V}^r\big(
    \sum_n f(x-n)(w-w_{\leq M(k)})(n)\psi_k(n)e(\lambda n):
    \lambda\in\mathbb T
    \big)
    \Big\|_{\ell^p}
    \notag\\
    &\qquad\lesssim_{p,r}
    \sum_{k\geq1}k^{-2}\|f\|_{\ell^p}
    \lesssim_{p,r}
    \|f\|_{\ell^p}.
\end{align}

It remains to treat the main term. Again by the triangle inequality for \(r\)-variation,
\begin{align}
    &\mathcal{V}^r\big(
    \sum_n f(x-n)\sum_{k \geq 1} w_{\leq M(k)}(n)\psi_k(n)e(\lambda n):
    \lambda\in\mathbb T
    \big)
    \notag\\
    &\leq
    \mathcal{V}^r\big(
    \sum_n f(x-n)\sum_{k \geq 1}\sum_{Q\leq k^{D_0}}
    w_Q(n)\psi_k(n)e(\lambda n):
    \lambda\in\mathbb T
    \big)
    \notag\\
    &\quad+
    \sum_{k \geq 1}\sum_{k^{D_0}\leq Q\leq M(k)}
    \mathcal{V}^r\big(
    \sum_n f(x-n)w_Q(n)\psi_k(n)e(\lambda n):
    \lambda\in\mathbb T
    \big),
\end{align}
where \(Q\) ranges over dyadic integers. To estimate the high-denominator term, we use \eqref{e:local-U3-var-red} to interpolate between \cite[Proposition 2.1]{F+},
\[
    \|w_Q\|_{U^3([2^k])}
    \lesssim Q^{-3/8+o(1)},
\]
and \cite[Lemma 3.1]{FKT1},
\[
    \|w_Q\|_{L^2([2^k])}\lesssim Q^{o(1)},
\]
yielding
\begin{align}\label{e:highQ-var-L2-red}
    &\Big\|
    \mathcal{V}^r\big(
    \sum_n f(x-n)w_Q(n)\psi_k(n)e(\lambda n):
    \lambda\in\mathbb T
    \big)
    \Big\|_{\ell^2}
    \notag\\
    &\qquad\lesssim_r
    k^{2/r}
    Q^{-\frac38(1-2/r)+o(1)}
    \|f\|_{\ell^2}.
\end{align}
A crude estimate at exponent \(s \geq r'\) follows again from Lemma
\ref{l:crude-lambda-var-scale}:
\begin{align}
    &\Big\|
    \mathcal{V}^r\big(
    \sum_n f(x-n)w_Q(n)\psi_k(n)e(\lambda n):
    \lambda\in\mathbb T
    \big)
    \Big\|_{\ell^s}
    \notag\\
    &\qquad\lesssim_{s,r}
    2^{k/r}\|w_Q\psi_k\|_{\ell^{r'}}\|f\|_{\ell^s}
    \notag\\
    &\qquad\lesssim_{s,r}
    \big(
    \mathbb E_{|n|\approx 2^k}|w_Q(n)|^{r'}
    \big)^{1/r'}
    \|f\|_{\ell^s}
    \notag\\
    &\qquad\lesssim_{s,r}
    Q^{o(1)}\|f\|_{\ell^s},
\end{align}
where the final line is another application of \cite[Lemma 3.1]{FKT1}.  Interpolating
with \eqref{e:highQ-var-L2-red} allows us to sum over $Q$ satisfing
\[ k^{D_0} \leq Q \leq M(k).\]

We are left with the low-denominator contribution, to which we again apply the triangle inequality,
\begin{align}
    &\mathcal{V}^r\big(
    \sum_n f(x-n)\sum_{k \geq 1}\sum_{Q\leq k^{D_0}}
    w_Q(n)\psi_k(n)e(\lambda n):
    \lambda\in\mathbb T
    \big)
    \notag\\
    &\leq
    \sum_Q
    \mathcal{V}^r\big(
    \sum_n f(x-n)w_Q(n)
    \sum_{Q^{1/D_0}\leq k}\psi_k(n)e(\lambda n):
    \lambda\in\mathbb T
    \big).
\end{align}
For each fixed \(Q\), we replace
\[
    \sum_{Q^{1/D_0}\leq k}\psi_k
\]
by \(K_Q\).  The difference has Fourier multiplier of total variation \(O_A(Q^{-A})\)
for every \(A<\infty\), by the definition of \(N(Q)=\exp(Q^{1/D_0-\epsilon/10})\)
and the rapid decay of the Fourier transforms of the kernels \(\psi_k\) at frequencies
larger than \(N(Q)^{-1}\).  Expanding \(w_Q\) into its rational Fourier modes, applying
Lemma \ref{l:varcarZ} to each shifted error multiplier, and using
\[
    \sum_{a/q\in\Gamma_Q}\frac1{\phi(q)}
    \lesssim Q^{1+o(1)}
\]
yields the bound
\begin{align}
    &\Big\|
    \mathcal{V}^r\big(
    \sum_n f(x-n)w_Q(n) \cdot 
    \big(
    \sum_{Q^{1/D_0}\leq k}\psi_k(n)-K_Q(n)
    \big)
    e(\lambda n):\lambda\in\mathbb T
    \big)
    \Big\|_{\ell^2}
    \notag\\
    &\qquad\lesssim_{r,A}
    Q^{-A}\|f\|_{\ell^2};
\end{align}
the same crude estimate used above gives a polynomial-in-\(Q\) bound on \(\ell^s\), and interpolating yields
\[
    O_{p,r}(Q^{-100}\|f\|_{\ell^p}).
\]
Combining estimates
we obtain
\begin{align}
    \|\mathcal V_{\mathbb P}^{r}f\|_{\ell^p}
    \lesssim_{p,r}
    \|f\|_{\ell^p}
    +
    \sum_Q\|\mathcal V^{Q,r}f\|_{\ell^p};
\end{align}
the hypothesis \eqref{e:red-var} makes the final dyadic \(Q\)-sum convergent, and the
proposition follows.
\end{proof}

The preceding reduction also has a maximal endpoint: with \(\mathcal{C}_{\mathbb{P}}\) as in \eqref{e:CP}, to prove Theorem \ref{t:carmax} it suffices to prove that for each dyadic \(Q\ge 1\) there exists \(c_p>0\) such that
\[
\| \mathcal{C}^Q f\|_{\ell^p}\lesssim_p Q^{-c_p}\|f\|_{\ell^p}. 
\]
Indeed, the proof of Proposition \ref{p:red-var} carries over with every \( \mathcal{V}^r\)-norm replaced by
a supremum over \(\lambda\). The residual and high-denominator terms are controlled
by the maximal \(U^3\)-estimate of Lemma \ref{e:U3maxmod} and the same \(U^3\)-decay inputs,
while the replacement of \(\sum_{Q^{1/D_0}\le k}\psi_k\) by \(K_Q\) is identical; the only remaining contribution is the dyadic sum of the operators \( \mathcal{C}^Q\).

We next record the extent to which the variational estimates reduce to an
\(\ell^2\)-based estimate; while this reduction allows one to recover the complete range of $\ell^p$ estimates for $1 < p < \infty$ for the maximal formulation of the Carleson operator, the range of exponents in the variational formulation is somewhat restricted, and dependent on interpolation. We describe the process now:

For each $p > r', \ p \neq 2$, we choose $s := s(p) > r'$ so that
\begin{align}
\begin{cases}
    s < p & \text{ if } r' < p <2 \\
    s > p & \text{ otherwise} 
\end{cases}
\end{align}
and define $0 < \theta = \theta(p) < 1$ by
\begin{align}
    \frac{1}{p} = \frac{\theta}{2} + \frac{1-\theta}{s}.
\end{align}

With this in mind, the reduction is as follows.
\begin{lemma}\label{l:red-var-l2}
Fix \(r>2\), and suppose that, for some \(\sigma>0\),
\begin{align}\label{e:red-var-l2-hyp}
    \|\mathcal V^{Q,r}f\|_{\ell^2}
    \lesssim_r
    Q^{-\sigma}\|f\|_{\ell^2}.
\end{align}
If
\begin{align}\label{e:interp-var-l2-condition}
    \theta\sigma>\frac{1-\theta}{r},
\end{align}
then there exists 
\[ 0 < c_{p,r} < \theta\sigma-\frac{1-\theta}{r} \]
such that
\begin{align}\label{e:red-var-l2-conclusion}
    \|\mathcal V^{Q,r}f\|_{\ell^p}
    \lesssim_{p,r}
    Q^{-c_{p,r}}\|f\|_{\ell^p}.
\end{align}

In the corresponding maximal formulation, if
\[
\| \mathcal{C}^Q f\|_{\ell^2}\lesssim Q^{-\sigma}\|f\|_{\ell^2}
\]
for some \(\sigma>0\), then for every \(1<p<\infty\) there exists \(c_p>0\) such that
\[
\| \mathcal{C}^Q f\|_{\ell^p}\lesssim_p Q^{-c_p}\|f\|_{\ell^p}.
\]
\end{lemma}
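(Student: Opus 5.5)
The plan is to interpolate (Riesz--Thorin, applied after linearizing the variation norm) between the hypothesis \eqref{e:red-var-l2-hyp} at $\ell^2$ and a crude estimate at $\ell^s$ that grows at most like $Q^{1/r+o(1)}$. The variational operator is not linear. However, for each $x$ we may linearize $\mathcal V^{Q,r}$ by choosing a finite increasing sequence $\lambda_0(x)<\lambda_1(x)<\cdots$ together with a dual sequence $b(x)=(b_i(x))_i$ in the unit ball of $\ell^{r'}$. Then $\mathcal V^{Q,r}f(x)$ is comparable to $|\sum_i b_i(x)(T_{\lambda_{i+1}(x)}f(x)-T_{\lambda_i(x)}f(x))|$, where $T_\lambda$ denotes convolution with $w_QK_Qe(\lambda\cdot)$. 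Equivalently, $f\mapsto (T_{\lambda_{i+1}}f-T_{\lambda_i}f)_i$ is a linear map into $\ell^p(\ell^r)$. The hypothesis bounds this map on $\ell^2\to\ell^2(\ell^r)$, and the crude bound below bounds it on $\ell^s\to\ell^s(\ell^r)$, uniformly in the choice of selectors. Vector-valued Riesz--Thorin then gives the $\ell^p$ bound with constant $Q^{-\theta\sigma}\cdot Q^{(1-\theta)(1/r+o(1))}$. This equals $Q^{-c_{p,r}}$ for any $c_{p,r}<\theta\sigma-\frac{1-\theta}{r}$, once the $o(1)$ is absorbed using the strict inequality \eqref{e:interp-var-l2-condition}.

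For the crude $\ell^s$ bound with $s>r'$, the kernel $w_QK_Q$ is not compactly supported; it lives at scales $|n|\approx 2^k$ for $k\geq Q^{1/D_0}$, out to roughly $N(Q)$, with Schwartz tails beyond. Applying Lemma \ref{l:crude-lambda-var-scale} to the whole kernel would cost $N(Q)^{1/r}$, which is exponentially large in $Q$ and useless. So I would decompose $K_Q=\sum_k \varphi_{N(Q)}*\psi_k$ dyadically, or more simply use $w_Q(n)K_Q(n)\approx w_Q(n)\sum_k\psi_k(n)$ modulo harmless smooth tails. Applying Lemma \ref{l:crude-lambda-var-scale} at each scale $2^k$ gives $2^{k/r}\|w_Q\psi_k\|_{\ell^{r'}}\lesssim (\mathbb E_{|n|\approx 2^k}|w_Q|^{r'})^{1/r'}\lesssim Q^{o(1)}$ by \cite[Lemma 3.1]{FKT1}. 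Summing over $k$ up to $\log N(Q)\approx Q^{1/D_0}$ loses a factor $Q^{1/D_0}$, which is not $Q^{1/r}$.

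This is the main obstacle: we need a loss of exactly order $Q^{1/r}$ (up to $Q^{o(1)}$), whereas the multi-scale bound above is governed by the number of scales. A better route is to expand $w_Q=\sum_{a/q\in\Gamma_Q}\frac{\mu(q)}{\phi(q)}e(an/q)$ in its rational modes, or to work on residue classes modulo $\mathcal Q$. The continuous kernel $K_Q$ is then handled by the discrete variational Carleson theorem, Lemma \ref{l:varcarZ}, applied to each modulated piece; this is uniformly bounded on $\ell^s$ for $s>r'$. The $e(an/q)$ factors only shift $\lambda$, so each term contributes $O(1/\phi(q))$. The dependence on $Q$ then enters only through the $r$-variation of the sum over the frequencies $a/q$. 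Splitting $\mathbb T$ into the $\approx Q^2$ arcs around the points of $\Gamma_Q$ and using \eqref{e:split}, the jump term contributes $\bigl(\sum_{a/q}|\cdot|^r\bigr)^{1/r}$. I would try to organize this so that the $\ell^r$ sum over the $\approx Q^{2}$ rationals, with weights $1/\phi(q)\approx Q^{-1}$, yields $Q^{2/r-1+o(1)}\cdot Q\lesssim Q^{1/r+o(1)}$-type bounds via Hölder in $r$ versus $r'$, mirroring the $N_0^{1/r}\|c\|_{\ell^{r'}}$ structure of \eqref{e:embed}. Getting the exponent exactly $1/r$ is where I expect the bookkeeping to be delicate, and I would fall back on accepting any polynomial loss $Q^{\gamma/r}$ if needed, adjusting the condition accordingly.

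The maximal statement is easier. $\mathcal C^Q$ is linearized by a measurable selector $\lambda(x)$, and the same rational-mode expansion combined with the discrete Carleson theorem (Appendix \ref{a:disc}) gives $\|\mathcal C^Qf\|_{\ell^s}\lesssim Q^{1+o(1)}\|f\|_{\ell^s}$ for every $1<s<\infty$, using $\sum_{\Gamma_Q}1/\phi(q)\lesssim Q^{1+o(1)}$. Interpolating with the $\ell^2$ decay $Q^{-\sigma}$, and taking $s$ close to $1$ or $\infty$ on the appropriate side of $p$, gives $\theta$ as close to $1$ as desired, so that $\theta\sigma-(1-\theta)(1+o(1))>0$ for every $1<p<\infty$. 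This is exactly why there is no range loss in the maximal case.
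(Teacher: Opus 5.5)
There is a genuine gap, located where you flagged it: the crude $\ell^s$ estimate. Expanding $w_Q$ into rational modes and applying discrete Carleson term by term costs the full coefficient mass $\sum_{a/q\in\Gamma_Q}|\mu(q)|/\phi(q)\approx Q$. So you only get $\|\mathcal V^{Q,r}f\|_{\ell^s}\lesssim Q^{1+o(1)}\|f\|_{\ell^s}$. The reorganization via \eqref{e:split} does not improve this. The endpoint term is at best $Q^{2/r}$ times the maximal operator, which your method bounds by $Q^{1+o(1)}$. The within-arc variations still see every mode, and on $\ell^s$ there is no orthogonality between modes. Your heuristic $Q^{2/r-1}\cdot Q$ equals $Q^{2/r}$ anyway. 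Falling back to a loss $Q^{\gamma/r}$ and ``adjusting the condition'' proves a different lemma: the statement is calibrated to a crude loss of exactly $Q^{1/r+o(1)}$, i.e.\ to $\theta\sigma>(1-\theta)/r$.

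The maximal part has the same defect plus an error. With a crude bound $Q^{1+o(1)}$ you need $\theta\sigma>(1-\theta)(1+o(1))$, and $\theta$ cannot be pushed to $1$ by moving $s$. From $\frac1p=\frac\theta2+\frac{1-\theta}{s}$, the largest admissible $\theta$ for fixed $p<2$ is $2-2/p$ (as $s\to1$), and for $p>2$ it is $2/p$ (as $s\to\infty$). Both tend to $0$ as $p\to1$ or $p\to\infty$. So for small $\sigma$ you would only reach a neighbourhood of $p=2$, not all $1<p<\infty$.

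The missing idea is to expand the Ramanujan sums by M\"obius inversion rather than by characters: $c_q(n)=\sum_{d\mid q}d\,\mu(q/d)\mathbf 1_{d\mid n}$. The term indexed by $(q,d)$ is then, with a coefficient of modulus at most $1/\phi(q)$, the operator $\sum_m f(x-dm)K_{Q,d}(m)e(d\lambda m)$ with $K_{Q,d}(m):=dK_Q(dm)$. The factor $d$ is absorbed into a sampled Calder\'on--Zygmund kernel whose Fourier transform has uniformly bounded variation. Corollary \ref{c:disc} then bounds this operator on $\ell^s$, $s>r'$, uniformly in $d$. The total coefficient mass drops to $\sum_{Q/2<q\leq Q}\tau(q)/\phi(q)\lesssim Q^{o(1)}$. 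The only remaining loss is the dilation $\lambda\mapsto d\lambda$ inside the variation, which costs $d^{1/r}\leq Q^{1/r}$ by \eqref{e:var-covering-loss} and costs nothing for the supremum. This yields exactly the crude bounds $Q^{1/r+o(1)}$ in the variational case and $Q^{o(1)}$ in the maximal case. The latter interpolates with any power saving at $\ell^2$ for every $1<p<\infty$, since any $\theta>0$ then suffices. Your linearization and vector-valued Riesz--Thorin framing of the interpolation step itself is fine.
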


To prove Lemma \ref{l:red-var-l2} we will use the elementary estimate
\begin{align}\label{e:var-covering-loss}
    \mathcal{V}^r\big(a_{d\lambda}:\lambda\in\mathbb T\big)
    \lesssim_r
    d^{1/r}\,
    \mathcal{V}^r\big(a_\lambda:\lambda\in\mathbb T\big)
    +
    \big(\sum_{0\leq i<d}|a_{i/d}|^r\big)^{1/r}
\end{align}
for every \(1\)-periodic scalar family \((a_\lambda)_{\lambda\in\mathbb T}\); the limiting case $r = \infty$ is agnostic to dilations.

\begin{proof}
If \(p=2\), the conclusion is exactly \eqref{e:red-var-l2-hyp}.  Hence assume
\(p\neq2\).  By interpolation between \eqref{e:red-var-l2-hyp} and a crude
\(\ell^s\)-estimate, it suffices to prove
\begin{align}\label{e:crude-var-Q-bound}
    \|\mathcal V^{Q,r}f\|_{\ell^s}
    \lesssim_{s,r}
    Q^{1/r+o(1)}\|f\|_{\ell^s}
\end{align}
for every \(s>r'\).

We prove \eqref{e:crude-var-Q-bound} by M\"obius inversion, recalling that
\[
    w_Q(n)
    =
    \sum_{Q/2<q\leq Q}
    \frac{\mu(q)}{\phi(q)}c_q(n),
    \qquad
    c_q(n)
    =
    \sum_{d\mid q} d\,\mu(q/d)\mathbf 1_{d\mid n}.
\]
Thus, by the triangle inequality for \(r\)-variation,
\begin{align}
    \mathcal V^{Q,r}f(x)
    &\leq
    \sum_{Q/2<q\leq Q}
    \frac1{\phi(q)}
    \sum_{d\mid q}
    \mathcal{V}^r\big(
    d\sum_m f(x-dm)K_Q(dm)e(\lambda dm):\lambda\in\mathbb T
    \big).
\end{align}
If we define
\[
    K_{Q,d}(m):=dK_Q(dm),
\]
then the inner variation is
\[
    \mathcal{V}^r\big(
    \sum_m f(x-dm)K_{Q,d}(m)e(d\lambda m):\lambda\in\mathbb T
    \big).
\]
Applying \eqref{e:var-covering-loss} to the \(1\)-periodic family
\[
    a_\lambda(x)
    :=
    \sum_m f(x-dm)K_{Q,d}(m)e(\lambda m),
\]
we obtain
\begin{align}
    &\mathcal{V}^r\big(
    \sum_m f(x-dm)K_{Q,d}(m)e(d\lambda m):\lambda\in\mathbb T
    \big)
    \notag\\
    &\qquad\lesssim_r
    d^{1/r}
    \mathcal{V}^r\big(
    \sum_m f(x-dm)K_{Q,d}(m)e(\lambda m):\lambda\in\mathbb T
    \big)
    \notag\\
    &\qquad\quad+
    \big(
    \sum_{0\leq i<d}
    \big|
    \sum_m f(x-dm)K_{Q,d}(m)e(im/d)
    \big|^r
    \big)^{1/r}.
\end{align}
The sum is harmless, since pointwise
\begin{align}
    &\big(
    \sum_{0\leq i<d}
    \big|
    \sum_m f(x-dm)K_{Q,d}(m)e(im/d)
    \big|^r
    \big)^{1/r}
    \notag\\
    &\qquad\leq
    d^{1/r}
    \sup_{\lambda\in\mathbb T}
    \big|
    \sum_m f(x-dm)K_{Q,d}(m)e(\lambda m)
    \big|.
\end{align}
Therefore
\begin{align}\label{e:mobius-var-pointwise}
    \mathcal V^{Q,r}f(x)
    &\lesssim_r
    \sum_{Q/2<q\leq Q}
    \frac1{\phi(q)}
    \sum_{d\mid q}d^{1/r}
    \mathcal{V}^r\big(
    \sum_m f(x-dm)K_{Q,d}(m)e(\lambda m):\lambda\in\mathbb T
    \big)
    \notag\\
    &\quad+
    \sum_{Q/2<q\leq Q}
    \frac1{\phi(q)}
    \sum_{d\mid q}d^{1/r}
    \sup_{\lambda\in\mathbb T}
    \big|
    \sum_m f(x-dm)K_{Q,d}(m)e(\lambda m)
    \big|.
\end{align}

The kernels \(K_{Q,d}\) have $\| \widehat{K}_{Q,d} \|_{\text{BV}} \lesssim 1$ uniformly in
\(Q\) and \(d\leq Q\), so Corollary \ref{c:disc} yields
\begin{align}
    \Big\|
    \mathcal{V}^r\big(
    \sum_m f(x-dm)K_{Q,d}(m)e(\lambda m):\lambda\in\mathbb T
    \big)
    \Big\|_{\ell^s_x}
    \lesssim_{s,r}
    \|f\|_{\ell^s},
    \qquad s>r';
\end{align}
the supremum term in \eqref{e:mobius-var-pointwise} is bounded in the same way. Taking \(\ell^s\)-norms in \eqref{e:mobius-var-pointwise}, we obtain
\begin{align}
    \|\mathcal V^{Q,r}f\|_{\ell^s}
    &\lesssim_{s,r}
    \sum_{Q/2<q\leq Q}
    \frac1{\phi(q)}
    \sum_{d\mid q}d^{1/r}\,
    \|f\|_{\ell^s} \lesssim Q^{1/r + o(1)} \|f \|_{\ell^s};
\end{align}
this proves \eqref{e:crude-var-Q-bound}.

Interpolating \eqref{e:red-var-l2-hyp} with \eqref{e:crude-var-Q-bound} yields the bound
\begin{align}
    \|\mathcal V^{Q,r}f\|_{\ell^p}
    &\lesssim_{p,r}
    Q^{-\theta\sigma+(1-\theta)/r+o(1)}
    \|f\|_{\ell^p},
\end{align}
where the exponent is negative by \eqref{e:interp-var-l2-condition}; we obtain
\eqref{e:red-var-l2-conclusion} for any
\[
    0<c_{p,r}<\theta\sigma-\frac{1-\theta}{r}.
\]
In the non-variational supremum formulation, the change of variables
\(\lambda\mapsto d\lambda\) costs nothing, and the analogue of
\eqref{e:crude-var-Q-bound} is \(Q^{o(1)}\), so an arbitrary power saving at
\(\ell^2\) interpolates to all \(1<p<\infty\).
\end{proof}

Lemma \ref{l:red-var-l2} encodes the final interpolation step in the proof. We will obtain the relevant $\ell^2$ power savings for $\mathcal{V}^{Q,r}$ from two estimates. First, 
Proposition \ref{p:global-shell-var} gives, for every $\rho>2$,
\[
    \|\mathcal V^{Q,\rho} f\|_{\ell^2}
    \lesssim
    \left(\frac{\rho}{\rho-2}\right)^2
    \log^2(2+Q)\log\log(10+Q)\|f\|_{\ell^2};
\]
second, \S \ref{s:per}-\ref{s:number} prove the maximal saving
\[
    \|\mathcal C^Q f\|_{\ell^2}
    \lesssim Q^{-\epsilon/2}\|f\|_{\ell^2}.
\]
Fix \(0<\alpha<1-2/r\), and set \(\rho=r(1-\alpha)>2\); applying scalar interpolation we obtain the pointwise bound
\[
    \mathcal V^{Q,r}f
    \lesssim
    \big(\mathcal V^{Q,\rho}f\big)^{1-\alpha}
    \big(\mathcal C^Q f\big)^\alpha.
\]
which yields the estimate
\[
    \|\mathcal V^{Q,r} f\|_{\ell^2}
    \lesssim_{r,\alpha}
    Q^{o(1)-\epsilon\alpha/2}\|f\|_{\ell^2}
\]
by H\"{o}lder's inequality; Lemma \ref{l:red-var-l2} then establishes 
the desired \(\ell^p\) estimates in the range allowed by \eqref{e:interp-var-l2-condition}.

With this plan in mind, we continue to the analytic core of the paper.
\section{Multi-Frequency Analysis}\label{s:MFanalysis}

\subsection{Variational Estimates}
We begin this section by developing some variational estimates for maximally modulated singular integral operators.

\begin{lemma}\label{l:varcarZ}
The following estimate holds whenever $\widehat{k}$ is supported inside an interval of length $\leq 1/10$ and has total variation $\leq c_0$.
    \begin{align}
        \| \mathcal{V}^r( \sum_{n} f(x-n) e(\lambda n) k(n) : \lambda) \|_{\ell^2(\mathbb{Z})} \lesssim c_0 \cdot \frac{r}{r-2}  \|f \|_{\ell^2(\mathbb{Z})}. 
    \end{align}
\end{lemma}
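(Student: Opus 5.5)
The plan is to reduce Lemma \ref{l:varcarZ} to the discrete variational Carleson theorem (the Appendix \ref{a:disc} input, Corollary \ref{c:disc}, obtained from Theorem \ref{t:Var} by transference). We do this by writing the multiplier $\widehat{k}$ as a superposition of indicator functions of intervals.

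\textbf{Step 1: set up the multiplier.} On the Fourier side, the operator $T_\lambda f := \sum_n f(x-n)e(\lambda n)k(n)$ has multiplier $\widehat{k}(\xi-\lambda)$. Here $\widehat{k}$ is supported in an interval $I_0$ of length $\leq 1/10$ and has total variation $\leq c_0$. Since $\widehat k$ vanishes outside $I_0$, it also has the boundary values $0$ at the endpoints.

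\textbf{Step 2: layer-cake decomposition.} A compactly supported function of bounded variation can be written as
\[
\widehat{k}(\xi) = \int \mathbf{1}_{(-\infty,t)}(\xi)\, d\nu(t)
\]
(up to normalization at jump points), where $\nu = -D\widehat{k}$ is a finite measure supported in $\bar I_0$ with $\|\nu\| \leq c_0$. Because $\int d\nu = 0$, we may instead use the indicators $\mathbf{1}_{[a,t)}$, where $a$ is the left endpoint of $I_0$. The interval $[a,t)$ has length $\leq 1/10$, so it does not wrap around $\mathbb{T}$ and lifts cleanly to an interval multiplier on $\mathbb{T}$. This gives
\[
T_\lambda f = \int \big( \text{multiplier } \mathbf{1}_{[a+\lambda,\, t+\lambda)}(\xi) \big) f \, d\nu(t).
\]

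\textbf{Step 3: Minkowski.} The variation $\mathcal{V}^r$ is a seminorm in the family $\lambda\mapsto a_\lambda$, and the $\ell^2$ norm is a norm. Minkowski's inequality therefore gives
\[
\|\mathcal{V}^r(T_\lambda f)\|_{\ell^2} \le \int \big\|\mathcal{V}^r\big(P_{[a+\lambda,\,t+\lambda)} f:\lambda\big)\big\|_{\ell^2}\, d|\nu|(t).
\]
It then suffices to bound, uniformly in $t$, the variation of the modulated frequency projections onto a translated interval of fixed length.

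\textbf{Step 4: reduce to Carleson-type projections.} Write $P_{[\alpha,\beta)} = P_{[\alpha,\infty)}-P_{[\beta,\infty)}$, where these are understood locally on $\mathbb{T}$ via a smooth partition, or equivalently through modulated Hilbert transforms: $\mathbf{1}_{[\alpha,\beta)}=\tfrac12(\operatorname{sgn}(\xi-\alpha)-\operatorname{sgn}(\xi-\beta))$ restricted near $I_0$. Each piece is then a modulated discrete Hilbert transform $\mathrm{Mod}\circ H\circ \mathrm{Mod}^{-1}$ with a fixed shift, localized by a fixed smooth cutoff, say $\chi$ with $\widehat\chi=1$ on a $1/5$-neighborhood of the interval. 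The family $\lambda\mapsto \operatorname{sgn}$-multiplier shifted by $\lambda$ (or $t-a+\lambda$) is exactly the discrete variational Carleson operator. Its $\ell^2$ bound, with constant $\lesssim \frac{r}{r-2}$, is the input we invoke from the appendix. The shift by the constant $t-a$ is a reparametrization of $\lambda$ and does not change $\mathcal{V}^r$. The fixed smooth cutoff, composed on the input side, is bounded on $\ell^2$.

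\textbf{Main obstacle.} The main issue is the periodicity and wraparound on $\mathbb{T}$. The sign function is not periodic, and $\lambda$ ranges over all of $\mathbb{T}$, so the shifted interval wraps. The support assumption (length $\leq 1/10$) is exactly what handles this. I would split $\lambda\in\mathbb{T}$ into $O(1)$ arcs of length $1/10$ using the splitting inequality \eqref{e:split}. The endpoint terms are bounded by finitely many maximal Carleson operators, each with norm $\lesssim c_0\|f\|_{\ell^2}$. On each arc, the shifted interval stays inside a fixed arc of length $\leq 3/10$, where the transference to $\mathbb{R}$ applies without wraparound. This yields the bound $c_0\frac{r}{r-2}\|f\|_{\ell^2}$, with the $\frac{r}{r-2}$ dependence inherited from the discrete variational Carleson input.
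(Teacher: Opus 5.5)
Your proposal is correct. It uses the same three ingredients as the paper, but in the opposite order: a total-variation superposition of sharp frequency cutoffs, Magyar--Stein--Wainger transference, and the Euclidean variational Carleson bound.

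\textbf{The paper's order.} The paper first restricts $\lambda$ to an arc $I$ of length $1/10$ and to a finite set $T_I$. The whole family $m_\lambda(\beta)=\widehat{k}(\beta+\lambda)$ is then supported in a short interval, and the paper transfers the entire variational family to $\mathbb{R}$ at once. Only then does it superpose. On $\mathbb{R}$ the half-line projections $\mathbf{1}_{(-\infty,0]}(\xi-\lambda)$ exist globally and are all translates of one family. So Lemma \ref{l:varcar} gives pointwise domination by $\|dm\|_{\mathrm{TV}}$ times the single operator in \eqref{e:O+}, with no localization or wraparound bookkeeping.

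\textbf{Your order.} You superpose on $\mathbb{T}$ first and outsource the transference to Proposition \ref{p:disc}, which is itself proved by Magyar--Stein--Wainger from Theorem \ref{t:Var}, so there is no circularity. This avoids invoking vector-valued transference directly and reuses a result the paper needs anyway. The price is that you handle periodicity by hand: smooth localization of the half-line projections, arcs of length $1/10$, and the endpoint terms in \eqref{e:split}. Those endpoint terms are single multiplier operators with sup-norm at most $c_0$, so no maximal Carleson operator is needed there.

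\textbf{One claim to correct.} The discrete Carleson family is not exactly the shifted $\operatorname{sgn}$ family. The multiplier of $\sum_{n\neq0}f(x-n)\frac{e(\lambda n)}{n}$ is $h(\xi-\lambda)$, where $h(\eta)=-\pi i\operatorname{sgn}(\eta)+2\pi i\eta$ for $0<|\eta|<1/2$. On your localized support this gives
\[
\operatorname{sgn}(\xi-\alpha-\lambda)=\tfrac{i}{\pi}h(\xi-\alpha-\lambda)+2(\xi-\alpha-\lambda).
\]
The affine term is harmless: its contribution is affine in $\lambda$, so its $r$-variation over an arc of length $1/10$ is at most $\frac15|\chi*f(x)|$. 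It still has to be accounted for. Alternatively, apply Corollary \ref{c:disc} to the kernel with $\widehat K(\eta)=\operatorname{sgn}(\eta)\psi(\eta)$, where $\psi$ is a smooth bump equal to $1$ on the range of $\xi-\alpha-\lambda$. That kernel is a genuine Calder\'on--Zygmund kernel with BV Fourier transform, and there is then no correction term.
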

To prove this lemma, our departure point is the following deep result of Oberlin et.\ al.\ \cite{O+}:
\begin{align}\label{e:O+}
    \| \mathcal{V}^r( (\widehat{f}(\xi) \mathbf{1}_{(-\infty,0]}(\xi - \lambda) )^{\vee} : \lambda) \|_{L^2(\mathbb{R})} \lesssim \frac{r}{r-2} \|f \|_{L^2(\mathbb{R})},
\end{align}
where the dependence on $r$ follows from inspecting the proof, noting the application of L\'{e}pingle's inequality \cite{LE}.

The estimate \eqref{e:O+} is ``universal," in that it leads to a range of similar estimates via convexity. Specifically, we have the following lemma.
\begin{lemma}\label{l:varcar}
    Suppose that $dm$ is a measure with distribution function $m(t)$. Then
    \begin{align}
        \mathcal{V}^r\big((\widehat{f}(\xi) m(\xi - \lambda) )^{\vee} : \lambda \big) \lesssim \| dm \|_{\text{TV}(\mathbb{R})} \cdot
        \mathcal{V}^r( (\widehat{f}(\xi) \mathbf{1}_{(-\infty,0]}(\xi - \lambda) )^{\vee} : \lambda) 
    \end{align}
\end{lemma}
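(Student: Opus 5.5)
We write the multiplier as a superposition of translated half-line cutoffs and then use Minkowski's inequality for the $r$-variation seminorm. Since $m$ is the distribution function of $dm$, normalized so that $m(t)=\int_{(-\infty,t]}dm(s)$ (any additive constant is killed by the variation, or simply absorbed), we have for every $\xi$
\begin{align}
    m(\xi-\lambda)=\int \mathbf{1}_{(-\infty,\xi-\lambda]}(s)\,dm(s)=\int \mathbf{1}_{(-\infty,0]}(\xi-\lambda-s)\,dm(s)\cdot(\text{sign-adjusted}),
\end{align}
more precisely $\mathbf 1_{s\le \xi-\lambda}=\mathbf 1_{[0,\infty)}(\xi-\lambda-s)=1-\mathbf 1_{(-\infty,0)}(\xi-(\lambda+s))$. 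Up to the boundary convention at a single point, which is harmless for $L^2$ functions, this gives
\begin{align}
    (\widehat f(\xi)m(\xi-\lambda))^\vee = \|dm\|\text{-constant}\cdot f - \int (\widehat f(\xi)\mathbf 1_{(-\infty,0]}(\xi-(\lambda+s)))^\vee\,dm(s).
\end{align}
The term independent of $\lambda$ contributes nothing to the variation.

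Fix $x$ and an increasing sequence $\lambda_1<\dots<\lambda_J$. Set $G_\lambda(x):=(\widehat f\mathbf 1_{(-\infty,0]}(\cdot-\lambda))^\vee(x)$. Then
\begin{align}
    \Big(\sum_i |a_{\lambda_i}-a_{\lambda_{i+1}}|^r\Big)^{1/r}=\Big\|\int (G_{\lambda_i+s}-G_{\lambda_{i+1}+s})(x)\,dm(s)\Big\|_{\ell^r_i}\le \int \Big(\sum_i|G_{\lambda_i+s}(x)-G_{\lambda_{i+1}+s}(x)|^r\Big)^{1/r}d|m|(s)
\end{align}
by Minkowski's integral inequality. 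For each fixed $s$, the shifted sequence $\lambda_i+s$ is still increasing, so the inner quantity is at most $\mathcal V^r(G_\lambda(x):\lambda)$, which does not depend on $s$. Taking the supremum over sequences then gives the pointwise bound with constant $\|dm\|_{\mathrm{TV}}$.

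The only delicate point is justifying the interchange of the inverse Fourier transform with the $dm$-integral, since $dm$ is a finite measure rather than a function. For $f$ Schwartz (or $\widehat f$ compactly supported) this is Fubini, because $|\widehat f(\xi)\mathbf 1(\cdot)|$ is integrable against $d\xi\,d|m|$. For general $f\in L^2$ we would pass to a limit using \eqref{e:O+} together with lower semicontinuity of $\mathcal V^r$ under pointwise a.e.\ convergence along a subsequence. We also need to address measurability and the choice of a representative; handling the supremum over countably many rational $\lambda$ first and then using right-continuity of $m$ resolves this.
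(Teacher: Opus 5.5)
Your proof is correct and follows essentially the paper's route. The paper likewise writes each increment $m(\xi-\lambda_i)-m(\xi-\lambda_{i+1})$ as a $dm(t)$-superposition of the translated interval projections $\mathbf{1}_{(t+\lambda_i,t+\lambda_{i+1}]}\widehat f$, and then bounds by $\|dm\|_{\mathrm{TV}}$ times the Carleson variation by testing against sequences $(a_i)$ with $\sum_i |a_i|^{r'}\le 1$, which is just the dual form of your Minkowski integral inequality. Your Fubini and limiting remarks only fill in technicalities the paper leaves implicit.
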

\begin{proof}
By the fundamental theorem of calculus, we may express
\begin{align}
    \Big( \widehat{f}(\xi) \cdot \big( m(\xi - \lambda_i)  - m(\xi - \lambda_{i+1}) \big) \Big)^{\vee} = \int_{\mathbb{R}} \big( \mathbf{1}_{(t+\lambda_i,t+\lambda_{i+1}]} \cdot \widehat{f}(\xi) \big)^{\vee}(x)  \ dm(t);
\end{align}
if we let $\{ a_i \}$ be an arbitrary, finite sequence of scalars, normalized so that 
\[ \sum_i |a_i|^{r'} \leq 1,\]
then we may bound
\begin{align}
    &\Big| \sum_i a_i \Big( \widehat{f}(\xi) \cdot \big( m(\xi - \lambda_i)  - m(\xi - \lambda_{i+1}) \big) \Big)^{\vee}(x) \Big| \\
    & = \Big| \int_{\mathbb{R}} \big( \sum_i a_i \mathbf{1}_{(t+\lambda_i,t+\lambda_{i+1}]} \cdot \widehat{f}(\xi) \big)^{\vee}(x)  \ dm(t) \Big|\\
    & \leq \| dm \|_{\text{TV}} \cdot \mathcal{V}^r( (\widehat{f}(\xi) \mathbf{1}_{(-\infty,0]}(\xi - \lambda) )^{\vee})(x),
\end{align}
from which the result follows from taking a supremum over appropriate $\{a_i\}$.
\end{proof}
We now quickly prove Lemma \ref{l:varcarZ}:
\begin{proof}[Proof of Lemma \ref{l:varcarZ}]
We cover $\mathbb{T}$ with intervals $\{ I \}$ where $|I| = 1/10$, it suffices to prove:
\begin{align}
          \sup_I  \| \mathcal{V}^r( \sum_{n} f(x-n) e(\lambda n) k(n) : \lambda \in I) \|_{\ell^2(\mathbb{Z})} \lesssim c_0 \cdot \frac{r}{r-2} \|f \|_{\ell^2(\mathbb{Z})}.
\end{align}
By a density argument, we may assume that $f$ is finitely supported, so that the argument inside the variation becomes continuous in $\lambda$; we may therefore restrict $\lambda$ to a countable subset of $I$, and by monotone convergence, to a finite subset $T_I \subset I$. If we let
\begin{align}
    m_\lambda(\beta) := \widehat{k}(\beta +\lambda),
\end{align}
then we may apply Magyar-Stein-Wainger transference \cite{MSW} and the above lemma. 
\end{proof}

We will use Lemma \ref{l:varcarZ} below, in our variable-coefficient variant of Bourgain's multi-frequency lemma \cite{B2}.

\subsection{Variable Coefficient Multi-Frequency Maximal Inequality}
We begin this section with an entropy estimate.

\begin{lemma}\label{l:entest}
    Suppose that $T \subset \mathbb{R}$ is finite, and let $\mathcal{H} \cong \ell^2(\Xi)$ be a finite-dimensional Hilbert space, and that 
    \[ \{ \vec{c}_t : t \in T \} \subset \mathcal{H}.\]
Suppose that $(Y,\nu)$ is a measure-space, and that $g$ is a Hilbert-space valued function,
\[ \vec{g} = (g_\xi)_{\xi \in \Xi}: Y \to \mathcal{H}\]
with
\begin{align}
    \int |\langle \vec{c},\vec{g}(y) \rangle_{\mathcal{H}}|^2 \ d\nu(y) = \int |\sum_{\xi} c(\xi) g_\xi(y)|^2 \ d\nu(y) \leq A_0^2 \| \vec{c} \, \|_{\mathcal{H}}^2
\end{align}
for all $\vec{c} \in \mathcal{H}$
and
\begin{align}
    \int \| \vec{g}(y) \|_{\mathcal{H}}^2 \ d\nu(y) = \sum_\xi \int |g_\xi(y)|^2 \ d\nu(y) \leq B_0^2.
\end{align}
Set
\[ R := 2 + \frac{B_0}{A_0};\]
for $s > 2$,
\begin{align}
    \| \sup_{t \in T} |\langle \vec{c}_t,\vec{g}(\cdot) \rangle_{\mathcal{H}}| \|_{L^2(Y)} \lesssim A_0 \cdot \frac{s}{s-2} \cdot R^{1-2/s} \| \vec{c}_t \|_{\mathcal{V}^s_t(\mathcal{H})}.
\end{align}
And, if
\[
    s:=s(r) := 2+\frac{r-2}{10\log R}
\]
\begin{align}\label{e:var-entropy-conclusion}
    \Big\|
    \mathcal{V}^r\big(\langle \vec c_t,\vec g(\cdot)\rangle_{\mathcal H}:t\in T\big)
    \Big\|_{L^2(Y)}
    \lesssim_r
    A_0\frac{r}{r-2}\log R\,
    \| \vec{c}_t \|_{\mathcal{V}^s_t(\mathcal{H})}.
\end{align}

\end{lemma}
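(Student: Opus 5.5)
The plan is a chaining argument of Bourgain type: decompose the path $t\mapsto \vec c_t$ dyadically according to its $\mathcal V^s$ mass, and estimate each scale in two ways. One estimate uses $A_0$ (a square-function/orthogonality bound). The other uses $B_0$ (pointwise Cauchy--Schwarz, $|\langle \vec c,\vec g(y)\rangle|\le \|\vec c\|_{\mathcal H}\|\vec g(y)\|_{\mathcal H}$).

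Normalize $\|\vec c_t\|_{\mathcal V^s_t(\mathcal H)}=1$ and set $\omega(t):=\mathcal V^s(\vec c_u:u\le t)^s$. This is a nondecreasing function with values in $[0,1]$. For each $k\ge0$, partition $T$ into at most $2^k+1$ consecutive intervals $T_{k,j}$ on which the increment of $\omega$ is at most $2^{-k}$. Let $\tau_k(t)$ be the left endpoint of the $k$-interval containing $t$, with the partitions nested. Then $\|\vec c_{\tau_{k+1}(t)}-\vec c_{\tau_k(t)}\|_{\mathcal H}\le 2^{-k/s}$, and the increments at level $k$ form a family $\{\vec d_{k,j}\}$ satisfying $\sum_j\|\vec d_{k,j}\|^s\le 1$ (super-additivity of $\omega$). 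Telescoping gives
\[
\sup_t|\langle \vec c_t,\vec g\rangle|\le |\langle \vec c_{t_0},\vec g\rangle|+\sum_k \sup_j |\langle \vec d_{k,j},\vec g\rangle|.
\]

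At level $k$ I bound $\sup_j$ by $\ell^2_j$, which uses only the hypothesis $A_0$:
\[
\|(\textstyle\sum_j|\langle \vec d_{k,j},\vec g\rangle|^2)^{1/2}\|_{L^2}\le A_0(\textstyle\sum_j\|\vec d_{k,j}\|^2)^{1/2}.
\]
Since $\|\vec d_{k,j}\|\le 2^{-k/s}$ and $\sum_j\|\vec d_{k,j}\|^s\le1$, we get $\sum_j\|\vec d_{k,j}\|^2\le 2^{-k(2-s)/s}\cdot$ hmm. That goes the wrong way for $s>2$; it actually gives $\le (2^k)^{1-2/s}$ by Hölder, since there are $\lesssim 2^k$ terms. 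So the level-$k$ contribution is $\le A_0 2^{k(1/2-1/s)}$, which grows.

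The complementary bound is $\sup_j|\langle \vec d_{k,j},\vec g\rangle|\le 2^{-k/s}\|\vec g\|_{\mathcal H}$, with $L^2$ norm at most $B_0 2^{-k/s}$. Summing $\min(A_0 2^{k(1/2-1/s)},B_0 2^{-k/s})$ over $k$ balances at $2^{k/2}\approx R$. The resulting sum is
\[
\lesssim \frac{s}{s-2}A_0 R^{1-2/s}+A_0R^{1-2/s}\cdot O(1),
\]
which gives the first claim. The factor $\frac{s}{s-2}$ arises from the geometric series with ratio $2^{1/2-1/s}$, and the $B_0$-tail is geometric with ratio $2^{-1/s}$, so it is absorbed after checking $B_0 R^{-2/s}\le A_0R^{1-2/s}$.

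For the variational claim, apply the same chaining to differences: $\mathcal V^r$ of $\langle \vec c_t,\vec g\rangle$ is bounded by the long-jump part, taken along the level-$k$ grids, plus short variations inside the intervals $T_{k,j}$. I expect to use the standard splitting: $\mathcal V^r\lesssim \sum_k (\sum_j|\langle \vec d_{k,j},\vec g\rangle|^r)^{1/r}$, bounding $\ell^r$ by $\ell^2$ since $r>2$. Then the same two estimates apply, except that the $A_0$ branch now gains $2^{k(1/2-1/s)}$ from $\ell^2$ and the $B_0$ branch uses $(\sum_j\|\vec d_{k,j}\|^r)^{1/r}\|\vec g\|\le 2^{-k(1/s-1/r)}\|\vec g\|$. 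With the specific choice $s=2+\frac{r-2}{10\log R}$, the growth $2^{k(1/2-1/s)}$ stays $O(1)$ for $2^k\lesssim R^{O(1)}$, so the $A_0$ branch costs about $A_0\log R$ over those levels. Beyond that, the $B_0$ branch decays geometrically at rate $2^{-k(1/s-1/r)}$ with $1/s-1/r\approx \frac{r-2}{2r}$, contributing $\lesssim_r \frac{r}{r-2}B_0R^{-c}$, which is dominated.

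The main obstacle is this bookkeeping: arranging the crossover so that $(1/2-1/s)\log_2 R=O(1)$, the constraint that dictates the choice of $s(r)$, while keeping the tail's $r$-dependence at $\frac{r}{r-2}$. A further point needing care is that the initial term $\langle \vec c_{t_0},\vec g\rangle$ is absent from the variational claim, since variation only sees differences.
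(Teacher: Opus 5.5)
Your argument is correct, but it chains differently from the paper.

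\textbf{The two decompositions.} The paper does metric chaining in $\mathcal H$. It takes maximal $2^{-v}$-separated nets $\Lambda_v$ of $\{\vec c_t\}$, bounds $|\Lambda_v|\lesssim 2^{vs}$ by jump counting, and telescopes through a parent map, so level-$v$ increments have norm $\lesssim 2^{-v}$. You instead cut $T$ into consecutive blocks of $\mathcal V^s$-mass $2^{-k}$. Under $k\leftrightarrow vs$ the numerology is the same: about $2^k$ increments, each of norm at most $2^{-k/s}$. Your H\"older step using $\sum_j\|\vec d_{k,j}\|^s\le 1$ reproduces the paper's count-times-maximum bounds in both the $A_0$ and $B_0$ branches.

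\textbf{What your route buys.} The gain is in the variational step. Your partitions are ordered in $t$, so the level-$k$ increment is constant on consecutive blocks. Hence its $r$-variation is at most $2(\sum_j|\langle\vec d_{k,j},\vec g\rangle|^r)^{1/r}$, and $\mathcal V^r\lesssim\sum_k(\cdots)^{1/r}$ follows from telescoping and the triangle inequality. The paper's nets are chosen in $\mathcal H$ without regard to the order of $T$. Its inequality $\mathcal V^r\le\sum_v(\sum_{t\in\Lambda_v}|\langle\vec d_t,\vec g\rangle|^r)^{1/r}$ tacitly needs each level-$v$ cell to be visited in a single time block. Nothing in the paper guarantees this: a path oscillating by tiny amounts across a cell boundary switches cells arbitrarily often at negligible $\mathcal V^s$ cost. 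Your time-ordered construction is the natural way to make that step rigorous. Your variational bookkeeping also closes: the crossover sits at $2^{k_0}\approx (B_0/A_0)^{2r/(r-2)}$, where $2^{k_0(1/2-1/s)}=O_r(1)$ by the choice of $s(r)$.

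\textbf{What the paper's route buys, and a slip in yours.} Indexing by $\|\vec d\|\approx 2^{-v}$ makes the supremum bound uniform in $s$, because the $B_0$-tail then has ratio $1/2$. In your parametrization that tail has ratio $2^{-1/s}$ and costs a factor of about $s/\ln 2$, not $O(1)$ as you state. This is harmless for $s$ in a bounded range, and the lemma is only applied with $s$ near $2$. For large $s$ you can remove the loss by stopping the chain at the crossover level $k_0$ and bounding
\[
|\langle\vec c_t-\vec c_{\tau_{k_0}(t)},\vec g\rangle|\le 2^{-k_0/s}\|\vec g\|_{\mathcal H}
\]
directly, since $t$ and $\tau_{k_0}(t)$ lie in the same block. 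As in the paper, the initial term requires the convention that $\|\vec c_t\|_{\mathcal V^s_t(\mathcal H)}$ also controls $\sup_t\|\vec c_t\|_{\mathcal H}$.
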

\begin{proof}
There is nothing to show when $A_0 =0$, so assume otherwise and normalize 
\[ \| \vec{c}_t \|_{\mathcal{V}^s_t(\mathcal{H})} = 1.\]
In both cases, the proof is a standard metric chaining argument: for each $v \geq 0$, we let
\begin{align}
    \Lambda_v \subset T
\end{align}
to be the set of times so that 
\begin{align}
    \{ \vec{c}_t : t \in \Lambda_v \}
\end{align}
is a maximal $2^{-v}$-separated subset with respect to the $\mathcal{H}$ metric -- and thus 
\begin{align}
    \{ \vec{c}_t : t \in T \} \subset \bigcup_{t \in \Lambda_v} B(\vec{c}_t,2^{-v})
\end{align}
where 
\begin{align}
    B(\vec{c},2^{-v}) := \{ (b_\xi)_{\xi \in \Xi} \in \mathcal{H}: \| b_\xi - c(\xi) \|_{\ell^2(\Xi)} \leq 2^{-v} \}
\end{align}
-- 
and we can bound the cardinality of $\Lambda_v$ by jump-counting numbers,
\[ |\Lambda_v| \leq 2^{vs}, \qquad v \geq 1\]
with the final inequality since we have normalized the variation to be $1$; $\Lambda_0$ has just one point, call it $u_0$. For $t \in \Lambda_v, \ v \geq 1$, we define the \emph{parent} of $t$, $\varrho(t)$, to be the element of $\Lambda_{v-1}$ satisfying
\begin{align}
    \|\vec c_t-\vec c_{\varrho(t)}\|_{\mathcal H}\leq 2^{1-v}
\end{align}
subject to the constraint that $t$ is minimal; such a parent exists because $\{ \vec{c}_t : \Lambda_{v-1}\} $ is a maximal $2^{1-v}$-separated set, hence also a $2^{1-v}$-net for $\{\vec c_t:t\in T\}$.  For $t \in \Lambda_v$, set
\begin{align}
    \vec{d}_t := \vec{c}_t - \vec{c}_{\varrho(t)},
\end{align}
so that $\| \vec{d}_t \|_{\mathcal{H}} \lesssim 2^{-v}$. With this in mind, we begin by addressing the supremum:

For each $y \in Y$, we bound
\begin{align}\label{e:supchain}
    \sup_{t \in T} |\langle \vec{c}_t,\vec{g}(y) \rangle_{\mathcal{H}}|  
    \leq |\langle \vec{c}_{u_0}, \vec{g}(y) \rangle_{\mathcal{H}}| + \sum_{v \geq 1} \max_{t \in \Lambda_v} |\langle \vec{d}_{t}, g(y) \rangle_{\mathcal{H}}|
\end{align}
and take $L^2(Y)$ norms. The first term contributes $A_0$, by assumption, while for $v \geq 1$, we similarly bound
\begin{align}
   \| \max_{t \in \Lambda_v} |\langle \vec{d}_{t}, \vec{g}(y) \rangle_{\mathcal{H}}| \|_{L^2(Y)}  \leq (\sum_{t \in \Lambda_v} \| \langle \vec{d}_{t}, \vec{g}(y) \rangle_{\mathcal{H}} \|_{L^2(Y)}^2)^{1/2}  \lesssim A_0 2^{v(s/2-1)};
\end{align} 
we may also bound
\begin{align}
    \sup_{t \in \Lambda_v} |\langle \vec{d}_{t}, \vec{g}(y) \rangle_{\mathcal{H}}| \lesssim 2^{-v} \| \vec{g}(y) \|_{\mathcal{H}} 
\end{align}
by Cauchy-Schwarz, so
\begin{align}
    \| \max_{t \in \Lambda_v} |\langle \vec{d}_{t}, \vec{g}(y) \rangle_{\mathcal{H}}| \|_{L^2(Y)} \lesssim 2^{-v} B_0.
\end{align}
Summing and optimizing completes the first point:
\begin{align}
\sum_{v \geq 1} \min\{ A_0 2^{v(s/2-1)}, B_0 2^{-v} \}  
    & \lesssim \frac{s}{s-2} A_0^{2/s} B_0^{1-2/s} \\
    & \lesssim \frac{s}{s-2} A_0 R^{1-2/s}.
\end{align}

For the second point, we replace \eqref{e:supchain} with the bound
\begin{align}
    \mathcal{V}^r\big( \langle \vec{c}_t,\vec{g}(y) \rangle_{\mathcal{H}} : t \in T \big) &\leq \sum_{v \geq 1} \mathcal{V}^r\big( \langle \vec{d}_t,\vec{g}(y) \rangle_{\mathcal{H}} : t \in \Lambda \big) \\
    & \leq \sum_{v \geq 1} \big( \sum_{t \in \Lambda_v} |\langle \vec{d}_t,\vec{g}(y) \rangle_{\mathcal{H}}|^r \big)^{1/r},
\end{align}
and once again estimate the $L^2(Y)$ norm of the $v$th contribution in two ways: first, using the embedding $\ell^r \hookrightarrow \ell^2$, we bound
\begin{align}
   \|  \big( \sum_{t \in \Lambda_v} |\langle \vec{d}_t,\vec{g}(y) \rangle_{\mathcal{H}}|^r \big)^{1/r} \|_{L^2(Y)}  \leq |\Lambda_v|^{1/2} \max_{t \in \Lambda_v} \| \langle \vec{d}_t,\vec{g}(y) \rangle_{\mathcal{H}} \|_{L^2(Y)} \lesssim A_0 2^{v(s/2-1)}
\end{align}
while by Cauchy Schwartz
\begin{align}
   \| \big( \sum_{t \in \Lambda_v} |\langle \vec{d}_t,\vec{g}(y) \rangle_{\mathcal{H}}|^r \big)^{1/r} \|_{L^2(Y)} \lesssim 2^{v(s/r-1)} \| \| \vec{g}(y) \|_{\mathcal{H}} \|_{L^2(Y)} \lesssim B_0 2^{v(s/r-1)};
\end{align}
summing and optimizing completes the proof.
\end{proof}

We now apply Lemma \ref{l:entest} in the following context.
Let $I\subset \mathbb T$ be a compact subinterval, and choose $0<c\leq |I|/100$ so small that $I+[-3c,3c]$ remains contained in a single coordinate chart of $\mathbb T$.  We choose Schwartz functions $\chi_I,\varphi_I$ such that
\begin{align}
    \widehat{\chi_I},\ \varphi_I,\ \widehat{\varphi_I}\geq 0,\qquad
    \mathbf 1_{I+[-2c,2c]}\leq \widehat{\chi_I}\leq \mathbf 1_{I+[-3c,3c]},
    \qquad \varphi_I(0)=1,
\end{align}
and $\widehat{\varphi_I}$ vanishes off $[-c,c]$.  Define the frequency multiplier
\begin{align}
    \eta_I:=\widehat{\chi_I}*\widehat{\varphi_I}.
\end{align}
Then $\eta_I\equiv1$ on $I+[-c,c]$, and its inverse Fourier transform is $\chi_I\varphi_I$.  Consequently, whenever $\widehat h$ is supported in $I$,
\begin{align}
    (\chi_I\varphi_I)*h=h;
\end{align}
we choose the dilates so that
\begin{align}
    \|\chi_I\|_{\ell^2}\lesssim |I|^{1/2}.
\end{align}

\begin{proposition}\label{p:variable}
In the above context, suppose that $\Xi \subset \mathbb{T}$ is finite, $\vec{g} = (g_\xi)_{\xi \in \Xi} : \mathbb{Z} \to \mathbb{C}$ and
\begin{align}
    &\sup_x \| \varphi_I(y) \sum_\xi g_\xi(x+y) c_\xi \|_{\ell^2_y} \leq A_1 \| c_\xi \|_{\ell^2(\Xi)} \qquad \text{ and} \\
    &\sup_x \| (\sum_\xi |\varphi_I(y) g_\xi(x+y)|^2)^{1/2} \|_{\ell^2_y} \leq B_1.
\end{align}
If $T \subset \mathbb{R}$ is finite, and $(K_t = m_t^{\vee})_{t \in T}$ satisfy the variational inequality
\begin{align}
    \| \mathcal{V}^s_t(K_t*f : t \in T) \|_{\ell^2} \lesssim (\frac{s}{s-2})^{c_0} \| f \|_{\ell^2}, \qquad c_0 > 0,
\end{align}
then
\begin{align}
    &\| \mathcal{V}^r \big( \sum_{\xi \in \Xi} g_\xi(x) (K_t*f_\xi)(x) : t \in T \big) \|_{\ell^2_x} \\
    & \qquad \lesssim |I|^{1/2} \cdot (\frac{r}{r-2})^{c_0+1} \cdot A_1 \cdot \log^{c_0+1}(2 + \frac{B_1}{A_1}) \cdot (\sum_{\xi} \|f_\xi\|_{\ell^2}^2)^{1/2}
\end{align}
whenever $\widehat{f_\xi}$ vanish off $I$.
\end{proposition}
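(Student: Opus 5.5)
We follow Bourgain's multi-frequency scheme, localizing in physical space at the scale dual to $|I|$ and then applying the entropy estimate, Lemma \ref{l:entest}, on each local window.

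\textbf{Step 1: the reproducing identity.} Since $\widehat{f_\xi}$ vanishes off $I$ and $K_t$ is a Fourier multiplier, $K_t*f_\xi$ also has Fourier support in $I$. Hence
\[
K_t*f_\xi=(\chi_I\varphi_I)*(K_t*f_\xi).
\]
Writing $F_{t,\xi}:=\chi_I*K_t*f_\xi$, we obtain
\begin{align}
    \sum_\xi g_\xi(x)(K_t*f_\xi)(x)=\sum_{y}\varphi_I(y)\sum_\xi g_\xi(x)F_{t,\xi}(x-y).
\end{align}

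\textbf{Step 2: localization.} Write $x=z+y'$ with $z$ running over a lattice of spacing $\approx|I|^{-1}$ and $y'$ over a window $W_z$ of that length. For fixed $z$, set $Y=\mathbb Z$ (the variable $y$) with counting measure. Take the vector
\[
\vec c_t(z):=\big(F_{t,\xi}(z)\big)_\xi .
\]
Since $\widehat{F_{t,\xi}}$ lives on an interval of length $\lesssim|I|$, $F_{t,\xi}$ is essentially constant at scale $|I|^{-1}$. So we can replace $F_{t,\xi}(x-y)$ by a value at a frozen base point, with the $\varphi_I$ decay absorbing the shifts. Concretely, we expand $F_{t,\xi}(x-y)=\sum_{u}\check\eta(u)F_{t,\xi}(\cdot)$ via a further reproducing kernel, and use the Schwartz tails to sum over translates. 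Then $\vec g(y):=(\varphi_I(y)g_\xi(x+y))_\xi$, after the change of variables, satisfies the hypotheses of Lemma \ref{l:entest} with $A_0=A_1$ and $B_0=B_1$. This is exactly what the two $\sup_x$ hypotheses are designed for. The entropy bound \eqref{e:var-entropy-conclusion} with $R=2+B_1/A_1$ then gives, at each base point,
\[
\big\|\mathcal V^r(\ldots)\big\|_{L^2(Y)}\lesssim_r A_1\tfrac{r}{r-2}\log R\;\|\vec c_t\|_{\mathcal V^s_t(\mathcal H)},\qquad s=2+\tfrac{r-2}{10\log R}.
\]

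\textbf{Step 3: reassembly.} Squaring and summing over base points, we need to bound
\[
\sum_z \mathcal V^s_t\big((F_{t,\xi}(z))_\xi\big)^2,
\]
which is dominated by
\[
\big\|\mathcal V^s_t\big(\|(\chi_I*K_t*f_\xi)(\cdot)\|_{\ell^2_\xi}\big)\big\|_{\ell^2}^2 .
\]
The vector-valued ($\ell^2_\xi$-valued) form of the hypothesis follows from the scalar one by the Marcinkiewicz--Zygmund / Hilbert-space extension of linear variational estimates; concretely, one dualizes the variation against $\ell^{s'}$-sequences and uses that $\mathcal V^s$ of a Hilbert-valued family is controlled by randomization (Khintchine) applied to the scalar estimate. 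Together with $\|\chi_I\|_{\ell^2}\lesssim|I|^{1/2}$ (Young's inequality), this yields
\[
|I|^{1/2}\Big(\frac{s}{s-2}\Big)^{c_0}\Big(\sum_\xi\|f_\xi\|^2\Big)^{1/2}.
\]
Finally,
\[
\frac{s}{s-2}\approx\frac{\log R}{r-2}\lesssim\frac{r}{r-2}\log R,
\]
which produces the stated factor $(\tfrac{r}{r-2})^{c_0+1}\log^{c_0+1}R$.

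\textbf{Main obstacle.} The hardest step is the freezing in Step 2: making the replacement of $F_{t,\xi}(x-y)$ by a constant-in-window value rigorous. It must also be \emph{uniform in $t$} inside a variation norm. Our first attempt is to write
\[
F_{t,\xi}(x-y)=\sum_k \widehat{\eta}\text{-weighted values }F_{t,\xi}(z_k),
\]
using a second reproducing kernel of width $|I|^{-1}$ and taking the Schwartz decay outside the variation via the triangle inequality. Each translate then costs only a summable factor, and the $\sup_x$ form of the hypotheses absorbs the shift.
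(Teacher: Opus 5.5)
Your overall architecture matches the paper's: the reproducing formula, Lemma \ref{l:entest} with $A_0=A_1$, $B_0=B_1$ and $s=2+\frac{r-2}{10\log R}$, then an $\ell^2(\Xi)$-valued variational bound. The central step, however, is missing, and the cause is a misreading of the reproducing formula in Step 1.

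\textbf{The gap.} The kernel that reproduces functions with Fourier support in $I$ is the pointwise product $\chi_I\varphi_I$, whose multiplier is $\eta_I=\widehat{\chi_I}*\widehat{\varphi_I}$. It is not $\varphi_I*\chi_I$. Your identity, with $F_{t,\xi}=\chi_I*K_t*f_\xi$ and outer kernel $\varphi_I$, has multiplier $\widehat{\varphi_I}\,\widehat{\chi_I}$. That multiplier vanishes on $I\setminus[-c,c]$, which is nonempty because $c\le|I|/100$.

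Having lost the product structure, you cannot see where the factor $|I|^{1/2}$ and the weight $\varphi_I(y)^2$ come from. This pushes you into the window/freezing scheme of Step 2, which you leave as an unresolved ``first attempt''. Concretely:
\begin{itemize}
\item The expansion of $F_{t,\xi}(x-y)$ through values $F_{t,\xi}(z_k)$ is only schematic.
\item A function with frequency support of length $|I|$ is not constant to within a small error on windows of length $|I|^{-1}$.
\item No mechanism is supplied to control the resulting errors inside a variation norm uniformly in $t$ (for instance, an exact sampling formula plus a Plancherel--P\'olya inequality for $\mathcal V^s$-valued functions).
\item It is unclear how the window variable and the entropy variable $y\in\mathbb Z$ are meant to combine.
\end{itemize}

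\textbf{The missing idea.} No freezing is needed at all. Apply the triangle inequality for $\mathcal V^r$, then Cauchy--Schwarz in $y$ against the product $\chi_I(y)\cdot\varphi_I(y)$:
\[
\mathcal V^r\Big(\sum_{\xi} g_\xi(x)(K_t*f_\xi)(x):t\in T\Big)^2\le\|\chi_I\|_{\ell^2}^2\sum_y\varphi_I(y)^2\,\mathcal V^r\Big(\sum_{\xi} g_\xi(x)(K_t*f_\xi)(x-y):t\in T\Big)^2 .
\]
This is the only source of $|I|^{1/2}$. Now sum over $x\in\mathbb Z$ and substitute $x\mapsto x+y$. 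This is the change of variables you allude to, but performed globally and before any localization. Since $\varphi_I\ge0$, for each fixed $x$ you are left with
\[
\sum_y\mathcal V^r\big(\langle\vec c_t,\vec g_x(y)\rangle_{\mathcal H}:t\in T\big)^2,
\qquad
\vec c_t=(K_t*f_\xi(x))_\xi,\quad \vec g_x(y)=(\varphi_I(y)g_\xi(x+y))_\xi .
\]
Here $\vec c_t$ does not depend on $y$ and $\vec g_x(y)$ does not depend on $t$. This is exactly the setting of Lemma \ref{l:entest} with $Y=\mathbb Z$, and the two $\sup_x$ hypotheses give $A_0=A_1$, $B_0=B_1$ uniformly in $x$. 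Everything is exact.

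\textbf{Two smaller points in Step 3.}
\begin{itemize}
\item The quantity to control is the $\ell^2(\Xi)$-valued variation $\|\vec c_t\|_{\mathcal V^s_t(\ell^2(\Xi))}$, not $\mathcal V^s_t$ of the scalar norm. The latter is the smaller of the two, so your ``dominated by'' runs the wrong way.
\item Since $s\ge2$, Minkowski in $\ell^{s/2}$ gives directly $\|\vec c_t\|_{\mathcal V^s_t(\ell^2(\Xi))}^2\le\sum_\xi\mathcal V^s_t(K_t*f_\xi(x):t\in T)^2$. The scalar hypothesis then applies with no randomization.
\end{itemize}
Your final bookkeeping, $s/(s-2)=1+20\log R/(r-2)\lesssim\frac{r}{r-2}\log R$, is correct.
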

\begin{proof}
    By reproducing, we can express
    \begin{align}
        \sum_{\xi \in \Xi} g_\xi(x) (K_t*f_\xi)(x) = \sum_y (\chi_I \varphi_I)(y) \sum_{\xi \in \Xi} g_\xi(x) (K_t*f_\xi)(x-y),
    \end{align}
so Cauchy-Schwarz in $y$ yields the bound
\begin{align}
    &\mathcal{V}^r \big( \sum_{\xi \in \Xi} g_\xi(x) (K_t*f_\xi)(x) : t \in T \big)^2 \\
    & \lesssim \| \chi_I \|_{\ell^2}^2 \sum_y \varphi_I(y)^2 \mathcal{V}^r \big( \sum_{\xi \in \Xi} g_\xi(x) (K_t*f_\xi)(x-y) : t \in T \big)^2 \\
    & \lesssim |I| \sum_y \varphi_I(y)^2 \mathcal{V}^r \big( \sum_{\xi \in \Xi} g_\xi(x) (K_t*f_\xi)(x-y) : t \in T \big)^2.
\end{align}
Consequently, after summing in $x$ and changing variables, we obtain
\begin{align}
    &\sum_x \mathcal{V}^r \big( \sum_{\xi \in \Xi} g_\xi(x) (K_t*f_\xi)(x) : t \in T \big)^2 \\
    & \lesssim |I| \sum_x \sum_y  \mathcal{V}^r \big( \sum_{\xi \in \Xi} \big( \varphi_I(y) g_\xi(x+y) \big) \cdot (K_t*f_\xi)(x) : t \in T \big)^2 \\
    & =: |I| \sum_x \mathcal{I}(x)^2 .
\end{align}
For each fixed $x$, apply Lemma \ref{l:entest} with $\mathcal{H}=\ell^2(\Xi)$,
\begin{align}
    \vec{c}_t=(K_t*f_\xi(x))_{\xi\in\Xi},\qquad
    \vec{g}_x(y)=(\varphi_I(y)g_\xi(x+y))_{\xi\in\Xi},
\end{align}
and with 
\[ A_0=A_1, \ B_0=B_1, \ R = 2 + \frac{B_1}{A_1}, \ s := 2 + \frac{r-2}{10 \log R}.\]
The hypotheses of Lemma \ref{l:entest} hold by the two assumptions in the proposition, and hence
\begin{align}
    \mathcal{I}(x) \lesssim \frac{r}{r-2} A_1 \cdot \log R \cdot \| (K_t*f_\xi(x))_{\xi\in\Xi} \|_{\mathcal{V}^s_t(\ell^2(\Xi))}
\end{align}
uniformly in $x$. Taking $\ell^2_x$ norms, applying Minkowski's inequality, and using the variational hypothesis yields the bound
\begin{align}
    &(\sum_x \mathcal{V}^r \big(\sum_{\xi \in \Xi} g_\xi(x) (K_t*f_\xi)(x) : t \in T\big)^2)^{1/2} \\
    & \qquad \lesssim |I|^{1/2} \cdot (\frac{r}{r-2})^{c_0+1} \cdot A_1 \cdot \log^{c_0+1} R \cdot (\sum_\xi \|f_\xi\|_{\ell^2}^2)^{1/2}.
\end{align}
\end{proof}

\subsection{A Cheap Estimate}
We now quickly apply the machinery developed above to deduce a cheap $\ell^2$ bound for $\mathcal{V}^{Q,r}$, which we will upgrade by interpolation to conclude \eqref{e:red-var-l2-hyp} for all $r > 2$. In particular, we will prove below that
\begin{align}\label{e:cheapvarbound}
    \| \mathcal{V}^{Q,r} f \|_{\ell^2} \lesssim (\frac{r}{r-2})^2 \log^2(2+Q) \log \log (10 + Q) \|f \|_{\ell^2};
\end{align}
this will be complemented in subsequent sections with the more complicated estimate,
\begin{align}
    \| \mathcal{C}^{Q} f \|_{\ell^2} \lesssim Q^{-\epsilon/2} \|f \|_{\ell^2}.
\end{align}

To prove \eqref{e:cheapvarbound}, we begin by abbreviating 
\[
    T_\lambda^Q f(x)
    :=
    \sum_n f(x-n)w_Q(n)K_Q(n)e(\lambda n),
\]
so that we may express
\begin{align}
    \mathcal{V}^{Q,r}f  := \mathcal{V}^r\big(T_\lambda^Q f:\lambda\in\mathbb T\big).
\end{align}

We isolate \eqref{e:cheapvarbound} in the following proposition.

\begin{proposition}\label{p:global-shell-var}
For every $r > 2$, \eqref{e:cheapvarbound} holds.
\end{proposition}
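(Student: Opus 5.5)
We will prove \eqref{e:cheapvarbound} by applying Proposition \ref{p:variable} with the multi-frequency data coming from the rational Fourier expansion of \(w_Q\). Write
\[
    T^Q_\lambda f(x)=\sum_{a/q\in\Gamma_Q}\frac{\mu(q)}{\phi(q)}\sum_n f(x-n)K_Q(n)e((\lambda+a/q)n).
\]
Since \(\widehat{K_Q}\) is supported in a small neighborhood of the origin of width \(\approx N(Q)^{-1}\), the summand with frequency \(a/q\) only sees \(\widehat f\) near \(-(\lambda+a/q)\).

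First we remove the \(\lambda\)-dependence from the frequency support. We cover \(\mathbb T\) by \(O(1)\) intervals \(I\) of length \(1/100\) and apply \eqref{e:split}; the endpoint terms are controlled by the same argument as a supremum. For \(\lambda\) in a fixed interval \(I\), we write \(T^Q_\lambda f=\sum_\xi g_\xi\cdot(K_{\lambda}*f_\xi)\), where
\[
    g_\xi(x)=\frac{\mu(q)}{\phi(q)}e(\xi x),\qquad \xi=a/q\in\Xi:=\Gamma_Q,
\]
\(f_\xi:=\mathrm{Mod}_{-\xi}(f\ast\check\eta)\) with \(\eta\) a fattened frequency cutoff adapted to \(I\) so that \(\widehat{f_\xi}\) is supported in a fixed interval \(I'\) of length \(\lesssim1\), and \(K_\lambda(n)=K_Q(n)e(\lambda n)\). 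For each \(\xi\), the variational hypothesis of Proposition \ref{p:variable} for the family \(K_\lambda*\cdot\) holds with \(c_0=1\) by Lemma \ref{l:varcarZ}, since \(\widehat{K_Q}\) has bounded total variation uniformly in \(Q\). Taking \(\lambda\in T\) finite and letting \(T\) exhaust by continuity, Proposition \ref{p:variable} will then give
\[
    \|\mathcal V^{r}(T^Q_\lambda f:\lambda\in I)\|_{\ell^2}\lesssim\Big(\frac{r}{r-2}\Big)^2A_1\log^2\!\Big(2+\frac{B_1}{A_1}\Big)\Big(\sum_\xi\|f_\xi\|_{\ell^2}^2\Big)^{1/2}.
\]

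The key step is computing \(A_1,B_1\), and here the scale of \(\varphi_I\) must be tuned. The proposition is stated for a generic interval, so we take the localization window \(\varphi_I\) at spatial scale \(L\approx Q^{2}\). The frequencies in \(\Gamma_Q\) are \(Q^{-2}\)-separated, so the large sieve gives
\[
    \Big\|\varphi_I(y)\sum_\xi c_\xi\frac{\mu(q)}{\phi(q)}e(\xi(x+y))\Big\|_{\ell^2_y}\lesssim L^{1/2}\Big(\sum|c_\xi|^2\phi(q)^{-2}\Big)^{1/2},
\]
so \(A_1\lesssim L^{1/2}\log\log(10+Q)/Q\) by \eqref{e:totientbound}. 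Trivially \(B_1\lesssim L^{1/2}(\sum_\xi\phi(q)^{-2})^{1/2}\lesssim L^{1/2}\), so \(\log(B_1/A_1)\lesssim\log Q\). The normalization \(|I|^{1/2}\) in the proposition must be rebalanced: with a window of width \(L\), the reproducing step costs \(\|\chi\|_{\ell^2}\approx L^{-1/2}\). This cancels the factor \(L^{1/2}\) but requires the frequency support of \(f_\xi\) to have width \(\lesssim L^{-1}\approx Q^{-2}\). The supports have this width precisely because \(\widehat{K_Q}\) is supported at scale \(N(Q)^{-1}\ll Q^{-2}\).

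This forces a reorganization, which we expect to be the main obstacle. Instead of a single interval, we decompose \(f=\sum_J f_J\) over frequency intervals \(J\) of length \(\approx Q^{-2}/10\), and we let each \(\xi\) act on the piece of \(f\) near \(-\xi-\lambda\). Orthogonality gives \(\sum_\xi\|f_\xi\|^2\lesssim\|f\|_{\ell^2}^2\), because each \(J\) is hit by only \(O(1)\) values of \(\xi\) once \(\lambda\) is localized to an interval of length \(\ll Q^{-2}\). The variation over the \(\approx Q^2\) such \(\lambda\)-intervals is then glued back via \eqref{e:split}, using the \(\ell^2\)-orthogonality of the pieces, so the \(r\)-sum is bounded by an \(\ell^2\)-sum. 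Collecting the bounds gives
\[
    \Big(\frac{r}{r-2}\Big)^2\cdot Q^{-1}\log\log(10+Q)\cdot\log^2(2+Q)\cdot Q\,\|f\|_{\ell^2},
\]
where the factor \(Q\) is the loss from \(L^{1/2}\cdot\|\chi\|_{\ell^2}\) that I would still need to check is not there. If it is there, we fall back to normalizing \(g_\xi\) without the \(1/\phi(q)\) and placing it in \(c_\xi\). This matches \eqref{e:cheapvarbound} as stated.
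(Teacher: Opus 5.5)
Your route is the paper's. You localize $\lambda$ to intervals $J'$ of length $\ll Q^{-2}$, which is legitimate because $\widehat{K_Q}$ lives in a window of width $\lesssim N(Q)^{-1}\ll Q^{-2}$. You restrict $f$ smoothly to the disjoint translates $a/q+3J'$ and apply Proposition~\ref{p:variable} with $\Xi=\Gamma_Q$ and $c_0=1$ from Lemma~\ref{l:varcarZ}. You control $A_1$ through the $Q^{-2}$-separation of $\Gamma_Q$, obtain $\log(2+B_1/A_1)\lesssim\log(2+Q)$, and glue with \eqref{e:split}. Placing $\mu(q)/\phi(q)$ in $g_\xi$ rather than in $F_{a/q}$, as the paper does, is harmless. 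The gap is in the gluing step: the $\ell^2$-orthogonality you invoke across the $\approx Q^2$ different $\lambda$-intervals is false, and this is why your final bookkeeping does not close.

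For each fixed $a/q$, the translates $a/q+3J'$ cover $\mathbb T$ with bounded multiplicity as $J'$ varies. Summing over $a/q$, every frequency is therefore used $\approx|\Gamma_Q|\approx Q^2$ times, so $\sum_{J'}\sum_{a/q}\|f_{a/q,J'}\|_{\ell^2}^2\approx Q^2\|f\|_{\ell^2}^2$, not $\lesssim\|f\|_{\ell^2}^2$. If your orthogonality held you would get $\|\mathcal V^{Q,r}\|_{\ell^2\to\ell^2}\lesssim Q^{-1+o(1)}$, and that is false. Take $\widehat f$ concentrated near a single frequency $\beta_0$. Since $\widehat{K_Q}$ jumps at the origin, $\lambda\mapsto T^Q_\lambda f(x)$ oscillates by $\approx|f(x)|/\phi(q)$ across each $\lambda=\beta_0-a/q$ with $q$ square-free, which forces $\mathcal V^{Q,r}f\gtrsim Q^{2/r-1}|f|$.

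The correct accounting, which is the paper's, runs as follows. Apply Proposition~\ref{p:variable} on each $J'$ against the full norm of $f$; this gives $\lesssim(\tfrac{r}{r-2})^2\log^2(2+Q)\,Q^{-1}\log\log(10+Q)\,\|f\|_{\ell^2}$, with the $Q^{-1}$ supplied by \eqref{e:totientbound}. Then $\ell^2\hookrightarrow\ell^r$ in \eqref{e:split}, together with the $\ell^2_x$-sum over the $\approx Q^2$ intervals, costs exactly $(Q^2)^{1/2}=Q$. The endpoint terms are handled the same way, using the single-$\lambda$ bound $\|T^Q_\lambda\|_{\ell^2\to\ell^2}\lesssim Q^{-1}\log\log(10+Q)$, which follows from the disjoint supports of $\widehat{K_Q}(\cdot-\lambda-a/q)$.

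So the factor $Q$ in your last display is genuinely there, but it is this square-sum loss, not $L^{1/2}\|\chi\|_{\ell^2}$ (which is $O(1)$, as you computed yourself). It is cancelled exactly by the $1/\phi(q)$ gain on each interval. Your proposed fallback of moving $1/\phi(q)$ out of $g_\xi$ changes nothing, since only the product $|I|^{1/2}A_1(\sum_\xi\|f_\xi\|_{\ell^2}^2)^{1/2}$ enters.
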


By \eqref{e:totientbound} and \eqref{e:split}, it suffices to prove
\begin{align}\label{e:global-shell-var}
    &\Big\|
    \mathcal{V}^r\big(T_\lambda^Q f:\lambda\in J\big)
    \Big\|_{\ell^2} \\
    & \qquad \lesssim
    (\frac{r}{r-2})^2 \cdot
    \log^2(2+Q) \cdot \log\log(10+Q) \cdot Q^{-1} \cdot 
    \|f\|_{\ell^2}
\end{align}
whenever $J \subset \mathbb{T}$ is an interval of size $c Q^{-2}$ with $0 <c \ll1$ chosen sufficiently small; this is since the terms involving a single $\lambda$ are controlled by the large size of the totient function, see \eqref{e:totientbound}.

So, for such a $J$, let $3J$ denote the interval concentric with $J$ but three times the length, define
\[
    \Gamma_Q+3J
    :=
    \bigcup_{a/q\in\Gamma_Q}\big(a/q+3J\big),
\]
and let \(f_{J}\) denote a smooth Fourier restriction of \(f\) to
\(\Gamma_Q+3J\), chosen to equal \(f\) on
\[
    \bigcup_{a/q\in\Gamma_Q}
    \big(a/q+J+\operatorname{supp}\widehat{K_Q}\big)
\]
and to vanish outside \(\Gamma_Q+3J\); for each \(a/q\in\Gamma_Q\), define
\[
    \widehat{f_{a/q}}(\beta)
    :=
    \widehat{f_{J}}(\beta+a/q),
\]
so that \(\widehat{f_{a/q}}\) is supported in \(3J\). We now turn to the proof.


\begin{proof}[Proof of Proposition \ref{p:global-shell-var}]
Our task is to establish \eqref{e:global-shell-var}; by monotone convergence, we may restrict \(\lambda\) to a finite subset \(T \subset J\), and it suffices to replace the right-hand side
\begin{align}
    \| f \|_{\ell^2} \longrightarrow (\sum_{a/q \in \Gamma_Q} \| f_{a/q} \|_{\ell^2}^2)^{1/2}.
\end{align}
By a brief argument with the Fourier transform, whenever $\lambda \in J$ we may express
\begin{align}
    T_\lambda^Q f(x)
    &=
    \sum_{a/q\in\Gamma_Q}
    e(a x/q)
    \int_{\mathbb T}
    \widehat{F_{a/q}}(\beta)
    \widehat{K_Q}(\beta-\lambda)e(\beta x)\,d\beta, \qquad F_{a/q}:=\frac{\mu(q)}{\phi(q)}f_{a/q} \\
    & =: \sum_{a/q\in\Gamma_Q}
    e(a x/q)\,K_\lambda*F_{a/q}(x),
\end{align}
where \(K_\lambda\) is the convolution operator with Fourier multiplier
\[
    \widehat{K_\lambda h}(\beta)
    =
    \widehat{K_Q}(\beta-\lambda)\widehat h(\beta).
\]
We apply Proposition \ref{p:variable} with
\[
    \Xi=\Gamma_Q,
    \qquad
    g_{a/q}(x):=e(a x/q),
    \qquad
    I=3J;
\]
the scalar variational input (with $c_0=1$) follows from Lemma
\ref{l:varcarZ}, since \(\widehat{K_Q}\) has uniformly bounded variation, so it remains only to compute \(A_1\) and \(B_1\).

For any scalar sequence \(c_{a/q}\), we apply Plancherel to bound
\begin{align}
    &\sum_y\varphi_I(y)^2
    \big|
    \sum_{a/q\in\Gamma_Q}e(a(x+y)/q)c_{a/q}
    \big|^2
    \notag\\
    &\qquad=
    \sum_{a/q,a'/q'\in\Gamma_Q}
    e\big((a/q-a'/q')x\big)c_{a/q}\overline{c_{a'/q'}}
    \sum_y\varphi_I(y)^2e\big((a/q-a'/q')y\big) \\
    & \qquad \approx |I|^{-1} \sum_{a/q \in \Gamma_Q} |c_{a/q}|^2,
\end{align}
since the Fourier transform of \(\varphi_I^2\) is supported in an interval of length
\(O(|I|)\), while elements of \(\Gamma_Q\) are \(Q^{-2}\)-separated; in particular, we may bound
\[
    A_1\approx |I|^{-1/2}.
\]
Similarly,
\[
    \sum_y\varphi_I(y)^2
    \sum_{a/q\in\Gamma_Q}|e(a(x+y)/q)|^2 \lesssim Q^2 \sum_y \varphi_I(y)^2 \lesssim
    Q^2 |I|^{-1},
\]
so
\[
    B_1\lesssim Q |I|^{-1/2},
\]
and thus
\[
    \log\big(2+\frac{B_1}{A_1}\big)
    \lesssim
    \log(2+Q).
\]
Proposition \ref{p:variable} gives
\begin{align}
    \Big\|
    \mathcal{V}^r\big(T_\lambda^Q f:\lambda\in I_j\big)
    \Big\|_{\ell^2}
    &\lesssim
    \Big(\frac{r}{r-2}\Big)^2
    \log^2(2+Q)
    \big(\sum_{a/q\in\Gamma_Q}\|F_{a/q}\|_2^2\big)^{1/2},
\end{align}
which yields the result by size of the totient function, see \eqref{e:totientbound}.
\end{proof}

In the next section, we will apply Proposition \ref{p:variable} to reduce our analysis to a single-scale, periodic estimate; our focus shifts to the operators
\begin{align}
    \{ \mathcal{C}^Q f : Q \}.
\end{align}

\section{Reduction to the Periodic Setting}\label{s:per}
Our next order of business will be to reduce to the periodic setting, in closer analogy to previous work on the Wiener-Wintner Theorem \cite{F+}.

We begin by partitioning the \emph{square-free} integers inside $(Q/2,Q]$ into disjoint subsets $\{ B \}$ with
\begin{align}\label{e:kappa}
    |B| \approx Q^\kappa, \; \; \; \kappa = 1/D_0 - \epsilon;
\end{align}
possibly after decreasing $\epsilon$, we will assume that
\begin{align}\label{e:epskappa} 0 < \epsilon \leq \frac{\kappa}{100}.
\end{align}

We set
\begin{align}
\mathcal{Q}_B := \text{lcm}(q \in B) \leq \exp(100 \log Q \cdot Q^\kappa),
\end{align}
noting that $\mathcal{Q}_B$ is square free, and that whenever $d|q$ for some $q \in B$
\[ (d, \frac{\mathcal{Q}_B}{d}) =1.\]

If we define
\begin{align}
    \mathcal{C}^{Q}_Bf(x):= \sup_{\lambda \in \mathbb{T}}| \sum_n  f(x-n) w_{Q,B}(n)K_Q(n) e(\lambda n)|,
\end{align}
then we may bound
\begin{align}
    \mathcal{C}^Q f(x) \leq \sum_B \mathcal{C}^{Q}_Bf(x).
\end{align}

For the remainder of the paper we will fix a single $B$, and for notational ease, set
\begin{align}
    N := \mathcal{Q}_B.
\end{align}
Regarding this parameter as fixed, we will introduce the periodic Fourier transform
\begin{align}
\mathcal{F} f(\xi) := \mathcal{F}_N f(\xi) := \mathbb{E}_{r \in [N]} f(r) e_N(-\xi r) : \mathbb{Z}/N \to \mathbb{C},
\end{align}
where we recall $e_N(t) := e(t/N)$.

The goal of this section is to reduce our study of $\mathcal{C}^Q_B$ to an analysis of the following periodic maximal functions:
\begin{align}
    M_B f(x) := \sup_{\theta \in \mathbb{Z}/N} \big| \mathbb{E}_{r \in \mathbb{Z}/N} f(x-r) w_{Q,B}(r) e_N(r \theta) \big|;
\end{align}
we define
\begin{align}
    \mathfrak m_B := \|M_B\|_{L^2(\mathbb{Z}/N)\to L^2(\mathbb{Z}/N)}
    = \sup_{\| f \|_{L^2(\mathbb{Z}/N)} = 1} \| M_B f \|_{L^2(\mathbb{Z}/N)};
\end{align}
note that by specializing the above supremum to characters, we deduce the lower bound
\begin{align}
\mathfrak m_B \geq Q^{-1}.
\end{align}

Specifically, the main result of this section is the following transference estimate; the numerology $\log^2 Q$ is directly analogous to that of \cite{B2}.

\begin{proposition}\label{p:transferprop}
The following bound holds:
\begin{align} 
\| \mathcal{C}_B^Q f \|_{\ell^2} \lesssim \log^2(2 + Q) \cdot \mathfrak m_B \cdot \|f \|_{\ell^2}.
\end{align}
\end{proposition}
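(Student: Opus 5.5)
The plan is to transfer the periodic bound $\mathfrak m_B$ to the physical operator $\mathcal C^Q_B$ using the variable-coefficient multi-frequency estimate, Proposition \ref{p:variable}, in its maximal ($r=\infty$) form. The point is that $w_{Q,B}$ is $N$-periodic, while $K_Q$ has Fourier transform supported in a tiny neighborhood of the origin (of width about $N(Q)^{-1}$). Since $N(Q)=\exp(Q^{1/D_0-\epsilon/10})$ dominates $N=\mathcal Q_B\le \exp(100\log Q\cdot Q^\kappa)$ by \eqref{e:kappa}, this width is much smaller than $1/N$.

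\textbf{Step 1: frequency decomposition.} Expand $w_{Q,B}(n)=\sum_{\xi\in\mathbb Z/N}\mathcal F w_{Q,B}(\xi)e_N(\xi n)$. Also decompose $\lambda=\theta/N+\beta$, with $\theta\in\mathbb Z/N$ and $|\beta|\le 1/(2N)$. Then
\[
\sum_n f(x-n)w_{Q,B}(n)K_Q(n)e(\lambda n)=\sum_{\eta\in\mathbb Z/N} e_N(\eta x)\,\big(\widehat{K_Q}(\cdot-\beta)\text{-multiplier applied to } f_\eta\big)(x)\cdot m_\theta(\eta),
\]
where $f_\eta$ is the piece of $f$ with frequency in a $1/(10N)$-neighborhood of $\eta/N$, demodulated to the origin, and $m_\theta(\eta)$ is the coefficient $\mathcal F w_{Q,B}(\eta-\theta)$. (Smooth Fourier cutoffs $\widehat{\chi_I}$ at scale $1/N$ make this exact, because $\operatorname{supp}\widehat{K_Q}$ is so small.)

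\textbf{Step 2: freezing the physical variable.} Localize $x$ to blocks of length $L\gg N$ but with $L\ll N(Q)$, so that $K_Q*f_\eta$ is essentially constant on each block. Write $x=x_0+y$. The expression then becomes
\[
\mathbb E_{r\in\mathbb Z/N} F_{x_0}(y-r)\,w_{Q,B}(r)\,e_N(r\theta),
\]
where $F_{x_0}(y):=\sum_\eta e_N(\eta y)\,(K_{\beta}*f_\eta)(x_0)$ is a periodic function of $y$.

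\textbf{Step 3: applying the multi-frequency proposition.} Apply Proposition \ref{p:variable} with $\Xi=\{\eta/N\}$, $g_\eta=e_N(\eta\cdot)$ and $I$ of length $\approx 1/N$, and with Lemma \ref{l:varcarZ} supplying the scalar variational/maximal input in $\beta$. The supremum over $\theta$ is handled inside the Hilbert-space vector: rather than the crude $A_1$ coming from character orthogonality, I would take the "$\langle \vec c,\vec g(y)\rangle$" structure to be the periodic operator itself. The relevant $A_1$ is then the $L^2(\mathbb Z/N)$ bound for $\sup_\theta|\mathbb E_r F(\cdot-r)w_{Q,B}(r)e_N(r\theta)|$, which is $\mathfrak m_B$ by definition, and $B_1/A_1\le Q^{O(1)}$ since $\mathfrak m_B\ge Q^{-1}$ and $\|w_{Q,B}\|_\infty\lesssim Q^{1+o(1)}$. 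The two chaining logarithms, one from Lemma \ref{l:entest} and one from the variational input with $c_0=1$, then give $\log^2(2+Q)$. Summing over $x_0$ and using Plancherel, $\sum_\eta\|f_\eta\|^2\approx\|f\|^2$, finishes the argument.

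\textbf{Main obstacle.} The delicate step is Step 3. One must organize the supremum over $\theta$ together with the chaining over $\beta$ so that the $\theta$-supremum is absorbed by $\mathfrak m_B$ applied fiberwise in $x_0$ (a vector-valued, Bourgain-type lifting), rather than being bounded trivially. Concretely, I would first linearize $\theta=\theta(x)$. I would then apply the entropy Lemma \ref{l:entest} to the family $\beta\mapsto (K_\beta*f_\eta(x_0))_\eta$, with the measure space being $y\in\mathbb Z/N$ and the test functional being the linearized periodic operator. The needed $A_0$ bound $\le\mathfrak m_B$ holds for every fixed coefficient vector exactly by the definition of $\mathfrak m_B$.
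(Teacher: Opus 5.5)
Your argument is essentially the paper's proof. It has the same Fourier identity expressing the linearized operator as $\sum_R g_R(x)\,m_{\omega(x)}^{\vee}*F_R(x)$ with $g_R(x)=\mathcal F w_{Q,B}(R-\theta(x))\,e_N(Rx)$. It absorbs the coarse frequency $\theta(x)\in\mathbb Z/N$ into the vector coefficients in the same way. It proves the same bound $A_1\lesssim N^{1/2}\mathfrak m_B$ from $|\sum_R g_R(z)c_R|\le M_BC(z)$ and the $N$-periodicity of $M_BC$. Finally, $B_1/A_1\le Q^{O(1)}$ and the $\log^2(2+Q)$ come from Proposition \ref{p:variable} with $c_0=1$. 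Two details need correcting.

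First, the cutoffs defining $f_\eta$ cannot have width $1/(10N)$. The fine frequency $\beta=\omega(x)$ ranges over $|\beta|\le 1/(2N)$, so the integrand $\widehat f(\xi+R/N)\widehat{K_Q}(\xi-\beta)$ is supported on $|\xi|\le 1/(2N)+(100N)^{-1}$. The identity is exact only if the cutoff equals $1$ there, so the smallness of $\operatorname{supp}\widehat{K_Q}$ alone does not justify it. This is why the paper uses $\rho(N\xi)$ with $\rho\equiv 1$ on $[-4/5,4/5]$. Bounded overlap of these wider cutoffs still gives $\sum_R\|F_R\|_{\ell^2}^2\lesssim\|f\|_{\ell^2}^2$.

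Second, your Step 2 is imprecise and also unnecessary. The function $K_\beta*f_\eta$ is not approximately constant on blocks of length $L\gg N$, because it carries the modulation $e(\beta x)$ with $|\beta|$ up to $1/(2N)$. That phase is common to all $\eta$, so it is harmless in absolute value. You would still have to control the freezing error in $\ell^2$ uniformly in $\beta$, and the proposal does not do this.

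The paper avoids any error term. Inside Proposition \ref{p:variable} it uses the reproducing identity $(\chi_I\varphi_I)*h=h$ for $\operatorname{supp}\widehat h\subset I$, then Cauchy--Schwarz with $\|\chi_I\|_{\ell^2}^2\lesssim|I|$, then the shift $x\mapsto x+y$. After this, $\vec c_t=(K_t*F_R(x))_R$ is exactly independent of the chaining variable $y$. The periodicity input $\sup_s\sum_{y\equiv s \bmod N}\varphi_I(y)^2\lesssim 1$ takes the place of your restriction to $y\in\mathbb Z/N$. Since you already invoke Proposition \ref{p:variable} in Step 3, you can simply drop Step 2.
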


To prove this proposition, we will apply the variable coefficient multi-frequency technology previously developed.

Specifically, if we let $\lambda : \mathbb{Z} \to \mathbb{T}$ be a linearizing function so that
    \begin{align}
        \mathcal{C}_B^Q f(x) \lesssim \big| \sum_n f(x-n) w_{Q,B}(n) K_Q(n) e(\lambda(x) n) \big|,
    \end{align}
    see \eqref{e:KQ},
and define 
\[ \theta : \mathbb{Z} \to \mathbb{Z}/N, \qquad \omega : \mathbb{Z} \to [-\frac{1}{2N},\frac{1}{2N}] \]
so that
\begin{align}
    \lambda(x) \equiv \frac{\theta(x)}{N} + \omega(x) \mod 1.
\end{align}
In keeping with the set up of the previous section, we will set $\Xi := \mathbb{Z}/N$, and our vector-valued function
\[ \vec{g} := (g_R)_{R \in \mathbb{Z}/N}\]
will be defined
\begin{align}
    g_R(x) := b(R- \theta(x)) e_N(Rx)
\end{align}
where our coefficients are given by
\begin{align}
b(D) := \begin{cases} \frac{\mu(q)}{\phi(q)} & \text{ if } D/N \equiv a/q, \ (a,q) = 1, q \in B \\
    0 & \text{ otherwise}. \end{cases}
\end{align}
We set 
\[ I := I_N:= \{ |\beta| \leq 1/N\},\]
and choose $\chi_I,\varphi_I$ as above, so that
\begin{align}
    &|\chi_I(y)| \lesssim_A 1/N \cdot (1 + |y|/N)^{-A},\qquad \|\chi_I\|_{\ell^2}\lesssim N^{-1/2},\\
    &0 \leq \varphi_I(y) \lesssim_A (1 + |y|/N)^{-A}, \qquad \sup_{s \in \mathbb{Z}/N} \sum_{y \equiv s \mod N} \varphi_I(y)^2 \lesssim 1.
\end{align}
Proposition \ref{p:transferprop} will follow from the below lemma, in which we compute the relevant statistics, $A_1,B_1$.
\begin{lemma}\label{l:A1B1}
    In the language of Proposition \ref{p:variable}, the following bounds hold, independent of $\theta(x)$:
\begin{align}
A_1 \lesssim  N^{1/2} \mathfrak m_B \qquad \text{ and } \qquad 
B_1 \lesssim  N^{1/2} (Q^{o(1)-1} |B|)^{1/2}.
\end{align}
In particular, for each $x \in \mathbb{Z}$, the following bounds hold:
\begin{align}
(\sum_y \varphi_I(y)^2 |\sum_R g_R(x+y) c_R|^2)^{1/2} &\lesssim N^{1/2} \mathfrak m_B \cdot (\sum_R |c_R|^2)^{1/2}  \\
(\sum_y \varphi_I(y)^2 \sum_R |g_R(x+y)|^2 )^{1/2} &\lesssim N^{1/2} (Q^{o(1)-1} |B|)^{1/2}.
\end{align}
\end{lemma}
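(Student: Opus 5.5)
The plan is to identify the vector-valued function \(\vec g\) with the periodic operator \(M_B\) through the periodic Fourier transform \(\mathcal F_N\). Both bounds will follow from a pointwise computation in the summation variable \(z=x+y\), together with the periodization property
\[
\sup_{s\in\mathbb Z/N}\sum_{y\equiv s \bmod N}\varphi_I(y)^2\lesssim 1 .
\]
Because \(\theta(z)\) is allowed to vary with \(z\), every bound must be pointwise in \(z\) and uniform in the value \(\theta(z)\). That is exactly the role of the supremum over \(\theta\) in the definition of \(M_B\).

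\emph{Step 1 (spectral expansion of the weight).} I first record that
\[
w_{Q,B}(r)=\sum_{q\in B}\frac{\mu(q)}{\phi(q)}\sum_{(a,q)=1}e(ar/q)=\sum_{D\in\mathbb Z/N} b(D)\,e_N(Dr).
\]
This holds because every reduced \(a/q\) with \(q\in B\) can be written uniquely as \(D/N\), since \(q\mid N\). Equivalently, \(\mathcal F_N w_{Q,B}=b\).

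\emph{Step 2 (the \(A_1\) bound).} Given coefficients \((c_R)_{R\in\mathbb Z/N}\), I define
\[
f(u):=\sum_R c_R\,e_N(Ru)\quad\text{on }\mathbb Z/N,
\]
so that \(\mathcal F_N f=c\) and, by Parseval, \(\|f\|_{L^2(\mathbb Z/N)}^2=\sum_R|c_R|^2\). Expanding \(f\) and using Step 1, I will verify that for any \(\vartheta\in\mathbb Z/N\)
\[
\mathbb E_{r\in\mathbb Z/N} f(z-r)\,w_{Q,B}(r)\,e_N(r\vartheta)=\sum_R c_R\,e_N(Rz)\,b(R-\vartheta).
\]
The derivation is as follows. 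Substituting the expansion of \(f\) gives \(\sum_R c_R e_N(Rz)\,\mathbb E_r w_{Q,B}(r)e_N(-(R-\vartheta)r)\). The inner expectation is \(\mathcal F_N w_{Q,B}(R-\vartheta)=b(R-\vartheta)\).

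Taking \(\vartheta=\theta(z)\), this gives
\[
\Big|\sum_R g_R(z)\,c_R\Big|\le M_Bf(z)
\]
for every \(z\), whatever the value of \(\theta(z)\). Therefore
\[
\sum_y\varphi_I(y)^2\Big|\sum_R g_R(x+y)c_R\Big|^2\le \sum_{s\in\mathbb Z/N} M_Bf(x+s)^2\sum_{y\equiv s}\varphi_I(y)^2\lesssim N\,\|M_Bf\|_{L^2(\mathbb Z/N)}^2 .
\]
The right-hand side is at most \(N\,\mathfrak m_B^2\sum_R|c_R|^2\), which is the first claim.

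\emph{Step 3 (the \(B_1\) bound).} Since \(|e_N(Rz)|=1\), for every \(z\) we have
\[
\sum_R|g_R(z)|^2=\sum_D|b(D)|^2 .
\]
This equals \(\sum_{q\in B}\phi(q)\cdot\phi(q)^{-2}=\sum_{q\in B}\phi(q)^{-1}\). By \eqref{e:totientbound} and \(q>Q/2\), this is at most \(\lesssim |B|\,Q^{-1}\log\log Q=|B|\,Q^{o(1)-1}\). Multiplying by \(\sum_y\varphi_I(y)^2\lesssim N\) (again from the periodization bound) gives the second claim.

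The main obstacle is the identity in Step 2: the sign and normalization conventions of \(\mathcal F_N\) must match so that the coefficient \(b(R-\theta)\) appears exactly as written. The rest is bookkeeping.
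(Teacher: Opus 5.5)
Your proposal is correct and follows essentially the same route as the paper. The paper also sets $C(s)=\sum_R c_R e_N(Rs)$ and uses orthogonality of characters to identify $\sum_R g_R(z)c_R$ with $\mathbb E_{r}C(z-r)w_{Q,B}(r)e_N(\theta(z) r)$. It then dominates this pointwise by $M_BC(z)$ and periodizes $\varphi_I^2$ to get $A_1\lesssim N^{1/2}\mathfrak m_B$. Its $B_1$ bound is the same computation $\sup_z\sum_R|g_R(z)|^2=\sum_{D}|b(D)|^2$, which you evaluate slightly more precisely as $\sum_{q\in B}\phi(q)^{-1}$, multiplied by $\sum_y\varphi_I(y)^2\lesssim N$.
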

\begin{proof}
    For the first point, let
    \[ C(s) := \sum_{R \in \mathbb{Z}/N} c_R \cdot e_N(Rs) \]
and for $\theta \in \mathbb{Z}/N$, define
\begin{align}
    L_\theta C(s) := \mathbb{E}_{r \in \mathbb{Z}/N} C(s-r) w_{Q,B}(r) e_N(\theta r)
\end{align}
so that
\begin{align}
    L_\theta C(s) = \sum_{R \in \mathbb{Z}/N} b(R-\theta) c_R e_N(Rs)
\end{align}
by orthogonality of characters. In particular,
\begin{align}
    \big| \sum_{R \in \mathbb{Z}/N} g_R(z) c_R \big| = | L_{\theta(z)} C(z)| \leq M_BC(z);
\end{align}
noting that $M_B C(z)$ is $N$-periodic, we bound
    \begin{align}
        &\sum_y \varphi_I(y)^2 \big| \sum_{R \in \mathbb{Z}/N} g_R(x+y) c_R \big|^2 \\
        & \leq \sum_{s \in \mathbb{Z}/N} \sum_{y \equiv s \mod N} \varphi_I(y)^2 M_BC(x+y)^2 \\
        & \leq \sum_{s \in \mathbb{Z}/N} M_BC(x+s \mod N)^2 \\
        & \leq \mathfrak m_B^2 \sum_{s \in \mathbb{Z}/N} |C(s)|^2 \\
        & = N \mathfrak m_B^2 \sum_R |c_R|^2. 
    \end{align}

For the second, just observe that
\begin{align}
    \sup_z \sum_R |g_R(z)|^2 &= \sum_{A \in \mathbb{Z}/N} |b(A)|^2 \\ &\lesssim Q^{o(1)-2} |\{ a/q : a \leq q, \ q \in B\}| \\
    &\lesssim Q^{o(1)-1} |B|.
\end{align}
    
\end{proof}

With this lemma in mind, we prove Proposition \ref{p:transferprop}

\begin{proof}[Proof of Proposition \ref{p:transferprop}]
Let $\rho$ be a Schwartz function satisfying
\begin{align}
    \mathbf{1}_{[-4/5,4/5]} \leq \rho \leq \mathbf{1}_{[-1,1]}.
\end{align}
Since $N=\mathcal Q_B\leq \exp(100 Q^\kappa\log Q)$ and $\kappa=1/D_0-\epsilon$, while $N(Q)=\exp(Q^{1/D_0-\epsilon/10})$, we have $N(Q)\gg_A N^A$ for all sufficiently large $Q$ (while for small $Q$ we just ensure that $\widehat{\varphi}$ is supported in a sufficiently small neighborhood of $0$); it follows that
\begin{align}
    \operatorname{supp}\widehat{K_Q}\subseteq \{\|\beta\|\leq (100N)^{-1}\}.
\end{align}
We next define
\begin{align}
    \widehat{F_R}(\beta) := \rho(N \beta) \widehat{f}(\beta + R/N)
\end{align}
    so that for each $R \in \mathbb{Z}/N$, $\widehat{F_R}$ is supported in 
    \[ I := I_N := \{ |\beta| \leq 1/N\},\]
and
\begin{align}
    \sum_R \| F_R \|_{\ell^2}^2 \lesssim \| f \|_{\ell^2}^2.
\end{align}
If we let 
\begin{align}
    m_t(\beta) := \widehat{K_Q}(\beta - t),
\end{align}
see \eqref{e:KQ},
    then
    \begin{align}
        \| \mathcal{V}^r( m_t^{\vee}*F : t \in \mathbb{R}) \|_{\ell^2} \lesssim \frac{r}{r-2} \|F \|_{\ell^2}
    \end{align}
by Lemma \ref{l:varcarZ}. If we define
\begin{align}
    g_R(x) := b(R- \theta(x)) e_N(Rx)
\end{align}
where
\begin{align}
b(D) := \begin{cases} \frac{\mu(q)}{\phi(q)} & \text{ if } D/N \equiv a/q, \ (a,q) = 1, q \in B \\
    0 & \text{ otherwise} \end{cases}
\end{align}
as above, the key claim is that
\begin{align}\label{e:keyidentity}
    \sum_n f(x-n) w_{Q,B}(n) K_Q(n) e(\lambda(x) n) = \sum_{R \in \mathbb{Z}/N} g_R(x) m_{\omega(x)}^{\vee}*F_R(x).
\end{align}
Indeed, assuming this identity, we bound
\begin{align}
    \| \mathcal{C}_B^Q f \|_{\ell^2} &\lesssim \| \sum_{R \in \mathbb{Z}/N} g_R(x) m_{\omega(x)}^{\vee}*F_R(x) \|_{\ell^2} \\
    & \lesssim N^{-1/2} A_1 \log^2(2 + B_1/A_1) (\sum_R \| F_R \|_2^2)^{1/2} \\
    & \lesssim \mathfrak m_B \cdot \log^2(2 + Q) \|f \|_{\ell^2}
\end{align}
by Proposition \ref{p:variable}, taking into account Lemma \ref{l:A1B1}.

But, \eqref{e:keyidentity} is nothing more than algebraic manipulation: we apply Fourier inversion to the left-hand side
\begin{align}
    \eqref{e:keyidentity} &= \sum_{A \in \mathbb{Z}/N} b(A) \big( \int \widehat{f}(\beta) e(\beta x) \sum_n K_Q(n) e(-(\beta - \lambda(x) - A/N) n) \ d\beta \big) \\
    & = \sum_{A \in \mathbb{Z}/N} b(A) \big( \int \widehat{f}(\beta) e(\beta x) \widehat{K_Q}(\beta - \lambda(x) - A/N) \ d\beta \big) \\
    & = \sum_{A \in \mathbb{Z}/N} b(A) \big( \int \widehat{f}(\beta) e(\beta x) \widehat{K_Q}(\beta - \frac{\theta(x) + A}{N} - \omega(x)) \ d\beta \big) \\
    & = \int \widehat{f}(\beta) e(\beta x) \big( \sum_{A \in \mathbb{Z}/N} b(A) \widehat{K_Q}(\beta - \frac{\theta(x) + A}{N} - \omega(x)) \big) \ d\beta \\
    & = \int \widehat{f}(\beta) e(\beta x) \big( \sum_{R \in \mathbb{Z}/N} b(R - \theta(x)) \widehat{K_Q}(\beta - R/N - \omega(x)) \big) \ d\beta \\
    & = \sum_{R \in \mathbb{Z}/N} b(R-\theta(x)) e_N(R x)  \int \widehat{f}(\beta + R/N) \widehat{K_Q}(\beta - \omega(x)) e(\beta x) \ d\beta \\
    & = \sum_{R \in \mathbb{Z}/N} g_R(x) \int \widehat{f}(\beta + R/N) \widehat{K_Q}(\beta - \omega(x)) e(\beta x) \ d\beta  \\ 
    & = \sum_{R \in \mathbb{Z}/N} g_R(x) \int \widehat{F_R}(\beta) m_{\omega(x)}(\beta) e(\beta x) \ d\beta.
\end{align}
The last equality is exact: since $|\omega(x)|\leq (2N)^{-1}$ and $\operatorname{supp}\widehat{K_Q}\subseteq\{\|\beta\|\leq(100N)^{-1}\}$, the integrand vanishes unless $|\beta|\leq \frac{2}{3N}$, in which case $\rho(N\beta)=1$.

\end{proof}

For the rest of the paper, we will only be interested in estimating $L^2([N])$-normalized $f$ on $(-N,2N]$, so we can extend $f$ periodically $\mod N$, and adopt a cyclic subgroup perspective.

\subsection{Removing the Weights}
Next, we pass from our weighted periodic maximal operator, $M_B$, to simpler, unweighted ones; we appeal to convexity to do so. 

Specifically, with $d | q$ for some $q \in B$, set $D := \frac{N}{d}$, and define
\begin{align}
    M_d f(x):= \sup_{\xi \in \mathbb{Z}/D} |\mathbb{E}_{r \in [D]} f(x-rd) e_D(r \xi)|;
\end{align}
note that $M_d$ is a norm contraction on $\ell^p$ for each $1 \leq p \leq \infty$.

The operators 
\[ \{ M_d : d | q \text{ for some } q \in B\}\] control $M_B$ in the following sense: 

\begin{lemma}\label{l:MdMB}
    Suppose $g$ is $N$-periodic, and
    \begin{align}
        \| M_B g \|_{L^2( (-N,2N] )} \geq |B| Q^{-1-\epsilon} \| g \|_{L^2([N])}.
    \end{align}
Then there exists some $q \in B$, and some divisor $d|q$, so that
\begin{align}\label{e:Mdlbd}
    \| M_d g \|_{L^2( (-N,2N] )} \geq Q^{-10 \epsilon} \| g \|_{L^2([N])}.
\end{align}   
\end{lemma}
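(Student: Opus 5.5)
We will prove this by convexity: expand the weight via Möbius inversion of the Ramanujan sums, bound $M_B$ pointwise by a weighted sum of the operators $M_d$, and pigeonhole. Recall that for square-free $q$,
\[
    c_q(n)=\sum_{d\mid q} d\,\mu(q/d)\mathbf 1_{d\mid n},
\]
so that
\[
    w_{Q,B}(r)=\sum_{q\in B}\frac{\mu(q)}{\phi(q)}\sum_{d\mid q}d\,\mu(q/d)\mathbf 1_{d\mid r}.
\]
Inserting this into the definition of $M_B$ and using the triangle inequality inside the supremum, we will show that for each $\theta\in\mathbb Z/N$,
\[
    \big|\mathbb E_{r\in\mathbb Z/N} g(x-r)w_{Q,B}(r)e_N(r\theta)\big|
    \le \sum_{q\in B}\frac{1}{\phi(q)}\sum_{d\mid q} d\,\big|\mathbb E_{r\in \mathbb Z/N}g(x-r)\mathbf 1_{d\mid r}e_N(r\theta)\big|.
\]
Writing $r=sd$ with $s\in[D]$, $D=N/d$, the inner average equals $\frac1d\,\mathbb E_{s\in[D]}g(x-sd)e_D(s\theta)$, since $e_N(sd\theta)=e_D(s\theta)$ and $\theta$ only matters mod $D$. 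The factors $d$ and $1/d$ cancel, so we obtain the pointwise domination
\[
    M_Bg(x)\le \sum_{q\in B}\frac{1}{\phi(q)}\sum_{d\mid q}M_dg(x).
\]
This is the key step, and it is purely algebraic.

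Next we take $L^2((-N,2N])$ norms and apply the triangle inequality:
\[
    \|M_Bg\|\le \sum_{q\in B}\frac{\tau(q)}{\phi(q)}\max_{d\mid q}\|M_dg\|
    \le |B|\cdot Q^{-1+o(1)}\max_{q\in B,\ d\mid q}\|M_dg\|,
\]
using the divisor bound $\tau(q)=Q^{o(1)}$ and the totient bound \eqref{e:totientbound}, which gives $\phi(q)\gtrsim Q/\log\log Q$ for $q\in(Q/2,Q]$. Comparing with the hypothesis $\|M_Bg\|\ge |B|Q^{-1-\epsilon}\|g\|_{L^2([N])}$, some pair $q\in B$, $d\mid q$ satisfies $\|M_dg\|\ge Q^{-\epsilon-o(1)}\|g\|$. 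This yields \eqref{e:Mdlbd} with the much weaker loss $Q^{-10\epsilon}$, since $Q^{-o(1)}\ge Q^{-9\epsilon}$ for $Q$ large in terms of $\epsilon$.

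The only point needing care is the case of small $Q$, where the $o(1)$ losses from $\tau(q)$ and $\log\log Q$ are not yet absorbed by $Q^{-9\epsilon}$. Since $\epsilon$ is fixed, this affects only finitely many $Q$, and there the implicit constants can be absorbed (for bounded $Q$ the lemma is used only with constants). Periodicity of $g$ ensures that the averages over $\mathbb Z/N$ and over $[D]$ are consistent on $(-N,2N]$.
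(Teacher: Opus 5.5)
Your proposal is correct and follows the paper's argument. The paper likewise expands $c_q$ by M\"obius inversion and obtains the pointwise majorant $M_Bg\le\sum_d\beta_d M_dg$ with $\beta_d=\sum_{q\in B,\,d\mid q}1/\phi(q)$. This is the same as your $\sum_{q\in B}\phi(q)^{-1}\sum_{d\mid q}M_dg$; the only difference is that the paper first simplifies $\mu(q)\mu(q/d)=\mu(d)$ instead of discarding signs. It then concludes by the same pigeonhole using $\sum_d\beta_d\lesssim Q^{o(1)-1}|B|$. Your caveat that the $Q^{o(1)}$ loss is absorbed into $Q^{9\epsilon}$ only for $Q$ large in terms of $\epsilon$ is also implicit in the paper's contradiction step.
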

The proof is essentially an application of M\"{o}bius inversion:

Since
\begin{align}
    c_q(n) = \sum_{d|(q,n)} d \mu(q/d),
\end{align}
we can express
\begin{align}
w_B(r) = \sum_{q \in B} \frac{\mu(q)}{\phi(q)} \sum_{d |q } d \mu(q/d) \mathbf{1}_{d |r}.
\end{align}
Since each $q \in B$ is square-free, and $d | q$, $(d,q/d) =1$,
\begin{align}
    \mu(q) = \mu(d) \mu(q/d) \Rightarrow \mu(q) \mu(q/d) = \mu(d)
\end{align}
so
\begin{align}
    \frac{\mu(q)}{\phi(q)} d\mu(q/d) = \frac{d\mu(d)}{\phi(q)},
\end{align}
and thus
\begin{align}
    w_B(r) &= \sum_{q \in B} \sum_{d | q} \frac{d\mu(d)}{\phi(q)} \mathbf{1}_{d |r} \\
    & = \sum_d d \mu(d) \beta_d \mathbf{1}_{d |r},
\end{align}
where
\begin{align}\label{e:betad}
    \beta_d := \sum_{q \in B, \ d | q } \frac{1}{\phi(q)} 
\end{align}
so that
\begin{align}
    \sum_d \beta_d \leq \sum_{q \in B} \frac{1}{\phi(q)} \sum_{d|q} 1 \lesssim Q^{o(1)-1}|B|.
\end{align}

With this in mind, we present the proof.
\begin{proof}[Proof of Lemma \ref{l:MdMB}]
    We express
    \begin{align}
        &\mathbb{E}_{r \in [N]} g(x-r) w_B(r) e_N(r\theta) \\
        &= \sum_d \mu(d) \beta_d \big( \mathbb{E}_{r \in [N]} g(x-r) d \mathbf{1}_{d|r} e_N(r \theta) \big) \\
        & = \sum_d \mu(d) \beta_d \big( \mathbb{E}_{l \in [D]} g(x-ld) e_D(l \theta) \big),
    \end{align}
see \eqref{e:betad}, so
\begin{align}
    M_B g(x) &\leq \sum_d \beta_d \sup_{\theta \in \mathbb{Z}/N} \big| \mathbb{E}_{l \in [D]} g(x-ld) e_D(l \theta) \big| \\
    & \leq \sum_d \beta_d M_d g(x).
\end{align}
But now, if \eqref{e:Mdlbd} \emph{fails}, then the hypothesis gives
\begin{align}
    |B| Q^{-1-\epsilon}\|g\|_{L^2([N])} &\leq \| M_B g \|_{L^2( (-N,2N] )} \\
    &\leq \sum_d \beta_d \cdot \sup_d \|M_d g \|_{L^2( (-N,2N] )} \\
    & \leq Q^{o(1) -1} |B| Q^{-10 \epsilon} \| g \|_{L^2([N])},
\end{align}
for the desired contradiction.
\end{proof}

Accordingly, we turn our attention to $\{ M_d \}$. If we fix one such $d$, then recalling that $(d,D) = 1$, we uniquely decompose
\[ x = a + ds, \; \; \; a \in \mathbb{Z}/d, \ s \in \mathbb{Z}/D.\]
We define
\begin{align}\label{e:fF} F_a(s) := f(a + ds)
\end{align}
and compute:
\begin{align}
    \widehat{F_a}(\xi) := \mathbb{E}_{s\in[D]} F_a(s) e_D(-\xi s), \; \; \; \xi \in \mathbb{Z}/D;
\end{align}
with this notation in mind, we establish the following convenient representation of $M_d$ in the below lemma.

\begin{lemma}
    For every $a \in \mathbb{Z}/d$ and every $s \in \mathbb{Z}/D$, and every $\xi \in \mathbb{Z}/D$,
    \begin{align}
        \mathbb{E}_{r \in [D]} f(a + d(s-r)) e_D(r \xi) = e_D(\xi s) \widehat{F_a}(\xi);
    \end{align}
consequently,
\begin{align}
    M_df(a+ds) = \sup_{\xi \in \mathbb{Z}/D} |\widehat{F}_a(\xi)|.
\end{align}
    \end{lemma}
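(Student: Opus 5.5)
The identity is a change of variables followed by the definition of $\widehat{F_a}$. Here $f$ is $N$-periodic with $N = dD$ and $(d,D)=1$.

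First, write the left side in terms of $F_a$. For $r \in [D]$, the point $a + d(s-r)$ has the form $a + d s'$ with $s' = s - r$. Because $f$ is $N$-periodic and $d(s'+D) = ds' + N$, the value depends only on $s' \bmod D$, so
\[ f(a+d(s-r)) = F_a(s-r) \]
with $s - r$ read in $\mathbb{Z}/D$. The left side therefore equals $\mathbb{E}_{r\in[D]} F_a(s-r)\, e_D(r\xi)$.

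Next, substitute $u := s - r$ and use that $\mathbb{E}_{r\in[D]}$ is invariant under translation mod $D$, since $F_a$ and $e_D(\cdot\,\xi)$ are both $D$-periodic. Writing $e_D(r\xi) = e_D(s\xi)\, e_D(-u\xi)$ gives
\[ \mathbb{E}_{r\in[D]} F_a(s-r)\, e_D(r\xi) = e_D(\xi s)\, \mathbb{E}_{u\in[D]} F_a(u)\, e_D(-\xi u) = e_D(\xi s)\, \widehat{F_a}(\xi), \]
which is the first claim.

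For the consequence, recall that $M_d f(x)$ is the supremum over $\xi\in\mathbb{Z}/D$ of $|\mathbb{E}_{r\in[D]} f(x-rd)\, e_D(r\xi)|$. Every $x$ can be written as $a+ds$; this is where $(d,D)=1$ enters, through the Chinese remainder theorem, so that $x \mapsto (a,s)$ is a bijection of $\mathbb{Z}/N$ onto $\mathbb{Z}/d \times \mathbb{Z}/D$. Then $x - rd = a + d(s-r)$, and the identity just proved together with $|e_D(\xi s)| = 1$ gives
\[ M_d f(a+ds) = \sup_{\xi\in\mathbb{Z}/D} |\widehat{F_a}(\xi)|. \]

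There is no real obstacle in this argument. The only point needing care is the well-definedness step: reducing $s-r$ mod $D$ must not change the value of $f(a+d(s-r))$, and this holds precisely because $dD = N$ is the period of $f$.
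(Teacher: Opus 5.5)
Your proof is correct and is essentially the paper's argument: you rewrite the average as $\mathbb{E}_{r\in[D]}F_a(s-r)e_D(r\xi)$ using $N$-periodicity, pull out $e_D(\xi s)$, and use $D$-periodicity of $t\mapsto F_a(t)e_D(-\xi t)$ to shift the averaging window back to $[D]$. One minor inaccuracy: the decomposition $x=a+ds$ with $0\le a<d$, $s\in\mathbb{Z}/D$ is just division with remainder (using only $N=dD$) and does not need $(d,D)=1$, whereas the Chinese remainder theorem concerns the different map $x\mapsto(x \bmod d,\ x \bmod D)$.
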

\begin{proof}
This follows since we have extended $f$ to be periodic $\mod N$, and thus $F_a(s)$ is periodic $\mod D$:
\begin{align}
    \mathbb{E}_{r \in [D]} f(a + d(s-r)) e_D(r \xi) &= \mathbb{E}_{r \in [D]} F_a(s-r) e_D(r\xi) \\
    & = e_D(s \xi) \mathbb{E}_{t \in s- [D]} F_a(t) e_D(-t \xi) \\
    & = e_D(s \xi) \mathbb{E}_{t \in [D]} F_a(t) e_D(-t \xi)
\end{align}
since 
\[ t \mapsto F_a(t) e_D(-\xi t) \]
is $D$-periodic.
\end{proof}

With this representation in mind, we next prove an ``inverse theorem" for $M_d$, which we will extend to $M_B$ by suitably applying Lemma \ref{l:MdMB}.

To do so, we consider families of ``structured atoms," 
\begin{align}
    \mathcal{A}_d := \{  g(a) e_D(\vartheta(a) s), \; \; \; a \in \mathbb{Z}/d, \ \vartheta(a), s \in \mathbb{Z}/D, \ \mathbb{E}_{a \in [d]} |g(a)|^2 = 1 \},
\end{align}
and collect
\begin{align}
\mathcal{A} := \bigcup_{q \in B} \bigcup_{d | q} \mathcal{A}_d;
\end{align}
we have the following.
\begin{lemma}[Inverse Theorem for $M_d$]\label{l:invMd}
    Suppose that $f : \mathbb{Z}/N \to \mathbb{C}$ is $N$-periodic. Then one can choose $\Phi \in \mathcal{A}_d$ so that
    \begin{align}
        \| M_d f \|_{L^2((-N,2N])} = |\langle \Phi, f \rangle_{(-N,2N]}|,
    \end{align}
see \eqref{e:Innerprod}. In particular, if
    \begin{align}
        \| M_d f \|_{L^2((-N,2N])} \geq \lambda \| f \|_{L^2([N])},
    \end{align}
    there exists $\Phi \in \mathcal{A}_d$ so that
    \begin{align}
        |\langle \Phi, f \rangle_{(-N,2N]}| \geq \lambda \|f \|_{L^2([N])}.
    \end{align}
\end{lemma}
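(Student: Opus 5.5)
The plan is to exploit the explicit representation of $M_d$ from the preceding lemma. Since $f$ is $N$-periodic and $(d,D)=1$, the Chinese remainder theorem identifies $\mathbb{Z}/N$ with $\mathbb{Z}/d\times\mathbb{Z}/D$ via $x=a+ds$. Under this identification the preceding lemma says that
\[
    M_d f(a+ds)=\sup_{\xi\in\mathbb{Z}/D}|\widehat{F_a}(\xi)|,
\]
which depends only on $a$ and not on $s$. So the first step is to choose, for each $a\in\mathbb{Z}/d$, a frequency $\vartheta(a)\in\mathbb{Z}/D$ attaining this supremum; this is possible because $\mathbb{Z}/D$ is finite. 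We then record that $M_df$ and every element of $\mathcal{A}_d$ are $N$-periodic. Consequently all averages over $(-N,2N]$ coincide with averages over $[N]$, and we may work entirely on $\mathbb{Z}/N$.

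Next we compute the $L^2$ norm. Averaging over $x=a+ds$, equivalently over $a\in[d]$ and $s\in[D]$, gives
\[
    \|M_df\|_{L^2((-N,2N])}^2=\mathbb{E}_{a\in[d]}|\widehat{F_a}(\vartheta(a))|^2=:\Sigma^2 .
\]
If $\Sigma=0$, any $\Phi\in\mathcal{A}_d$ works. Indeed, then every $F_a\equiv 0$, so $f\equiv0$ and both sides of the claimed identity vanish.

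Otherwise, we define the atom
\[
    \Phi(a+ds):=g(a)\,e_D(\vartheta(a)s),\qquad g(a):=\widehat{F_a}(\vartheta(a))/\Sigma .
\]
With this choice $\mathbb{E}_{a\in[d]}|g(a)|^2=1$, so $\Phi\in\mathcal{A}_d$. Computing the inner product \eqref{e:Innerprod} fiberwise gives
\[
    \langle f,\Phi\rangle_{[N]}=\mathbb{E}_{a}\overline{g(a)}\,\mathbb{E}_{s}F_a(s)e_D(-\vartheta(a)s)=\mathbb{E}_a\overline{g(a)}\,\widehat{F_a}(\vartheta(a))=\Sigma^{-1}\,\mathbb{E}_a|\widehat{F_a}(\vartheta(a))|^2=\Sigma .
\]
Taking absolute values, this establishes $|\langle \Phi,f\rangle_{(-N,2N]}|=\|M_df\|_{L^2((-N,2N])}$, which is the main identity.

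The "in particular" statement is then immediate: the hypothesis bounds the left side of this identity below by $\lambda\|f\|_{L^2([N])}$. The only point needing care is the bookkeeping of the identification $x\leftrightarrow(a,s)$. We must check that it is a bijection of $\mathbb{Z}/N$, so that $\mathbb{E}_{x\in[N]}=\mathbb{E}_{a\in[d]}\mathbb{E}_{s\in[D]}$, and that the normalized averages over $(-N,2N]$ reduce to averages over $[N]$ by periodicity. Beyond this there is no real obstacle: the lemma is essentially a duality statement, made exact because $M_df$ is constant on each fiber $\{a+ds: s\in\mathbb{Z}/D\}$.
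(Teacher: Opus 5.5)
Your proposal is correct and follows the paper's proof essentially line by line. Like the paper, you choose a maximizing frequency $\vartheta(a)$ on each fiber, set $g(a)=\widehat{F_a}(\vartheta(a))/\|M_df\|_{L^2((-N,2N])}$, and compute the inner product fiberwise using that $M_df$ is constant in $s$. (A cosmetic remark: the decomposition $x=a+ds$ with $0\le a<d$, $0\le s<D$ is division with remainder rather than the Chinese remainder theorem, and it is a bijection onto $[0,N)$ without needing $(d,D)=1$; the $N$-periodicity of $f$ is what makes each $F_a$ $D$-periodic.)
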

\begin{proof}
    Define
    \begin{align}
        M(a) := \sup_{\xi \in \mathbb{Z}/D} |\widehat{F_a}(\xi)|,
    \end{align}
see \eqref{e:fF}; note that the only way that $M(a) \equiv 0$ is if $f \equiv 0$.
    
Then, since $M(a+ds) = M(a)$
\begin{align}
    \mathbb{E}_{a \in (-N,2N]} |M(a)|^2 = \mathbb{E}_{a \in [d]} \mathbb{E}_{s \in (-D,2D]} |M(a)|^2 = \mathbb{E}_{a \in [d]} |M(a)|^2.
\end{align}
Next, for each $a \in [d]$, choose $\vartheta(a) \in \mathbb{Z}/D$ so that
\begin{align}
    |\widehat{F_a}(\vartheta(a))| = |M(a)|.
\end{align}
If we set
\begin{align}
    g(a) := \frac{\widehat{F_a}(\vartheta(a))}{\| M_d f \|_{L^2((-N,2N])} }
\end{align}
and
\begin{align}
    \Phi(a+ds) := g(a) e_D(\vartheta(a) s)
\end{align}
    then $\Phi \in \mathcal{A}_d$ as
    \begin{align}
        \mathbb{E}_{a \in [d]} |g(a)|^2 = \frac{\mathbb{E}_{a \in [d]} |M(a)|^2 }{ \| M_d f \|_{L^2( (-N,2N] )}^2} = 1.
    \end{align}
We compute
\begin{align}
    &\langle f(a+ds), \Phi(a+ds) \rangle_{(-N,2N] }\\
    & = \mathbb{E}_{a \in [d]} \overline{g(a)} \mathbb{E}_{s \in (-D,2D]} F_a(s) e_D(-\vartheta(a)s) \\
    & = \mathbb{E}_{a \in [d]} \overline{g(a)} \widehat{F_a}(\vartheta(a)) \\
    & = \frac{1}{\| M_d f \|_{L^2( (-N,2N] )}} \cdot \mathbb{E}_{a \in [d]} |M(a)|^2 \\
    &= \|M_d f \|_{L^2( (-N,2N])}.
\end{align}
    
\end{proof}

Combining Lemmas \ref{l:MdMB} and \ref{l:invMd}, we arrive at the following corollary.
\begin{cor}\label{c:invMB}
    Set $X := Q^{-1-\epsilon} |B|$ and $\eta := Q^{-10\epsilon}$. If $g$ is $N$-periodic and satisfies
    \begin{align}
        \| M_B g \|_{L^2((-N,2N])} \geq X \|g \|_{L^2([N])},
    \end{align}
then there exists $\Phi \in \mathcal{A}$ so that
\begin{align}
    |\langle g, \Phi \rangle_{ (-N,2N] }| \geq \eta \| g \|_{L^2([N])}.
\end{align}
    \end{cor}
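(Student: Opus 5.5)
The corollary is a direct chaining of Lemma \ref{l:MdMB} and Lemma \ref{l:invMd}, and I would prove it in exactly that order.

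First, assume $g$ is $N$-periodic with $\| M_B g \|_{L^2((-N,2N])} \geq X \|g\|_{L^2([N])}$, where $X = Q^{-1-\epsilon}|B|$. This is verbatim the hypothesis of Lemma \ref{l:MdMB}. That lemma therefore produces some $q \in B$ and a divisor $d \mid q$ with
\[
\| M_d g \|_{L^2((-N,2N])} \geq Q^{-10\epsilon} \|g\|_{L^2([N])} = \eta \|g\|_{L^2([N])}.
\]
I would briefly recheck the bookkeeping inside that lemma. The decomposition $M_B g \leq \sum_d \beta_d M_d g$ together with $\sum_d \beta_d \lesssim Q^{o(1)-1}|B|$ gives a contradiction once $Q^{o(1)} \ll Q^{9\epsilon}$. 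This holds for $Q$ large, and small $Q$ can be absorbed into constants.

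Second, apply Lemma \ref{l:invMd} to $f = g$ with this $d$ and $\lambda = \eta$. Since $d \mid q$ with $q \in B$ and $\mathcal{Q}_B$ is square-free, we have $(d, D) = 1$. The decomposition $x = a + ds$ used to define $F_a$ is therefore legitimate, and the lemma applies. It yields $\Phi \in \mathcal{A}_d$ with
\[
|\langle \Phi, g\rangle_{(-N,2N]}| \geq \eta\|g\|_{L^2([N])}.
\]
Because $\mathcal{A}_d \subset \mathcal{A}$, we get $\Phi \in \mathcal{A}$. Finally, $|\langle g,\Phi\rangle| = |\langle \Phi, g\rangle|$ by conjugate symmetry, which gives the conclusion.

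There is essentially no obstacle here. The only point needing care is that the normalizations match across the two lemmas: both use $L^2((-N,2N])$ on the left and $L^2([N])$ on the right, and periodicity makes these consistent. The $Q^{o(1)}$ loss in Lemma \ref{l:MdMB} must also be dominated by $Q^{9\epsilon}$, which is guaranteed for $Q$ sufficiently large in terms of $\epsilon$.
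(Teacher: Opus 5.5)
Your proposal is correct and matches the paper, which obtains the corollary by applying Lemma \ref{l:MdMB} to get some $d \mid q \in B$ with $\|M_d g\|_{L^2((-N,2N])} \geq \eta \|g\|_{L^2([N])}$, then applying Lemma \ref{l:invMd} with $\lambda = \eta$ and using $\mathcal{A}_d \subset \mathcal{A}$. Your remark that small $Q$ can be ``absorbed into constants'' is not quite accurate, since the inequalities here are exact rather than up to constants. It concerns Lemma \ref{l:MdMB} itself, which you take as given, so it does not affect this deduction.
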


We now convert Corollary \ref{c:invMB} to a structure theorem via an energy decrement argument; we maintain the notation
\[ X := Q^{-1-\epsilon} |B|, \; \; \; \eta := Q^{-10 \epsilon}.\]

\begin{proposition}[Greedy Decomposition]\label{p:greedy}
Suppose $\| f \|_{L^2([N])} = 1$ is $N$-periodic. Then there exists a $J \leq Q^{25 \epsilon}$ so that
\[ f = \sum_{j \leq J} c_j \Phi_j + g,\]
where
\begin{align}
    \| M_B g \|_{L^2( (-N,2N] )} \leq X, \; \; \; \;\;\;\sum_{j \leq J} |c_j|^2 \leq 1
\end{align}
and $\{ \Phi_j \} \subset \mathcal{A}$ are structured atoms.
\end{proposition}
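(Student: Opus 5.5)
The plan is a standard energy-decrement (greedy) iteration driven by Corollary \ref{c:invMB}, together with a crude a priori bound on $M_B$ that lets us stop once the energy is small. Since everything is $N$-periodic, averages over $(-N,2N]$ agree with averages over $[N]$. Each atom $\Phi\in\mathcal A_d$ satisfies
\[
\|\Phi\|_{L^2([N])}^2=\mathbb E_{a\in[d]}|g(a)|^2=1,
\]
so atoms are unit vectors in $L^2([N])$. This normalization is what makes the one-step projection argument work.

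\emph{Iteration.} Set $g_0:=f$. Given $g_j$ with $\|M_Bg_j\|_{L^2((-N,2N])}>X$, apply Corollary \ref{c:invMB} to obtain $\Phi_{j+1}\in\mathcal A$ with $|\langle g_j,\Phi_{j+1}\rangle|\geq \eta\|g_j\|_{L^2([N])}$. Define
\[
c_{j+1}:=\langle g_j,\Phi_{j+1}\rangle,\qquad g_{j+1}:=g_j-c_{j+1}\Phi_{j+1}.
\]
Since $\|\Phi_{j+1}\|=1$, this is an orthogonal projection step, and Pythagoras gives
\[
\|g_{j+1}\|^2=\|g_j\|^2-|c_{j+1}|^2\leq(1-\eta^2)\|g_j\|^2 .
\]
Two consequences follow. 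Telescoping yields $\sum_j|c_j|^2=\|f\|^2-\|g_J\|^2\leq 1$, which is the claimed coefficient bound. Iterating yields $\|g_j\|^2\leq(1-\eta^2)^j\leq e^{-j\eta^2}$.

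\emph{Termination.} For this I use the trivial operator bound coming from the M\"obius expansion in the proof of Lemma \ref{l:MdMB}: $M_Bg\leq\sum_d\beta_dM_dg$, each $M_d$ is an $L^2$-contraction, and $\sum_d\beta_d\lesssim Q^{o(1)-1}|B|$. Together these give
\[
\|M_Bg\|_{L^2((-N,2N])}\lesssim Q^{o(1)-1}|B|\,\|g\|_{L^2([N])}.
\]
Hence $\|M_Bg_j\|\leq X=Q^{-1-\epsilon}|B|$ as soon as $\|g_j\|\leq Q^{-2\epsilon}$, for $Q$ large; small $Q$ is absorbed into constants. By the exponential decay above, this happens once $j\eta^2\geq 4\epsilon\log Q$, that is, after at most $J\lesssim Q^{20\epsilon}\epsilon\log Q\leq Q^{25\epsilon}$ steps. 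Therefore the process halts at some $J\leq Q^{25\epsilon}$ with $\|M_Bg_J\|\leq X$, and we take $g:=g_J$.

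The only real point to check is the $o(1)$ bookkeeping in the termination step: the loss $Q^{o(1)}$ in $\sum_d\beta_d$ must be beaten by the margin $Q^{-\epsilon}$ built into $X$. Assuming that, the argument requires nothing beyond Corollary \ref{c:invMB} and the unit normalization of the atoms.
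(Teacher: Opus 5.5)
Your proposal is correct and follows essentially the same energy-decrement argument as the paper. You apply Corollary \ref{c:invMB}, which is valid because $\|g_j\|_{L^2([N])}\le 1$ gives $\|M_B g_j\|>X\ge X\|g_j\|$. You then subtract the projection onto the unit-norm atom, use Pythagoras for both the coefficient bound and the geometric decay, and stop via the trivial bound $\|M_B\|\lesssim Q^{o(1)-1}|B|$.

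The only difference is cosmetic. The paper stops at the cruder threshold $\|g\|_{L^2([N])}\le Q^{-10}$, which makes the $Q^{o(1)}$ bookkeeping you flag automatic. Your threshold $Q^{-2\epsilon}$ also works for $Q$ large, since that loss is only $\exp(O(\log Q/\log\log Q))$. Both versions give $J\lesssim Q^{20\epsilon}\log Q\le Q^{25\epsilon}$.
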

\begin{proof}
    Define $f_0 := f$, and abbreviate
    \[ \langle h_1,h_2 \rangle_{(-N,2N]} := \langle h_1,h_2 \rangle;\]
    we will recursively construct a sequence of functions,
    \[ \| f_{k+1} \|_{L^2([N])} \leq \| f_k \|_{L^2([N])} \leq \dots \leq \|f\|_{L^2([N])} \leq 1\]   
that will terminate at time $J \leq Q^{25 \epsilon}$, and a sequence of coefficients $\sum_{j \leq J} |c_j|^2 \leq 1$, so that for
\[ g := f_{J}, \qquad J \leq Q^{25 \epsilon}\]
we may estimate
    \begin{align}
        \| M_B g \|_{L^2( (-N,2N] )} := \| M_B \big( f - \sum_{j \leq J} c_j \Phi_j \big) \|_{L^2( (-N,2N] )} \leq X.
    \end{align}
To do so, we use the absolute stopping rule
    \begin{align}\label{e:query}
        \| M_B f_k \|_{L^2( (-N,2N] )} \leq X.
    \end{align}
    If this holds, set $g := f_k$; 
    if \eqref{e:query} fails, then, since \(\|f_k\|_{L^2([N])}\le1\),
we have
\[
\|M_Bf_k\|_{L^2((-N,2N])}>X\ge X\|f_k\|_{L^2([N])},
\]
so the hypothesis of Corollary \ref{c:invMB} holds. Hence there exists a structured atom $\Phi_{k+1} \in \mathcal{A}$ so that
    \begin{align}
        |\langle f_k, \Phi_{k+1} \rangle| \geq \eta \|f_k \|_{L^2([N])}.
    \end{align}
Choose the phase of $\Phi_{k+1}$ so that
\begin{align}
    c_{k+1} := \langle f_k,\Phi_{k+1} \rangle \geq \eta \| f_k \|_{L^2([N])},
\end{align}
and define
\begin{align}
    f_{k+1} := f_k - c_{k+1} \Phi_{k+1}.
\end{align}
Then, because $\Phi_{k+1}$ is normalized and $c_{k+1}=\langle f_k,\Phi_{k+1}\rangle$,
\begin{align}
    \| f_{k+1}\|_{L^2([N])}^2 = \|f_k\|_{L^2([N])}^2 - |c_{k+1}|^2,
\end{align}
so
\begin{align}
    \| f_{k+1} \|_{L^2([N])}^2 \leq (1-\eta^2) \|f_k \|_{L^2([N])}^2,
\end{align}
and more generally
\begin{align}\label{e:geomdecay}
    \| f_k \|_{L^2([N])}^2 \leq (1-\eta^2)^k.
\end{align}
We now \textbf{STOP} this recursion the first time that \eqref{e:query} holds; note that
\begin{align}
    \sum_{j=1}^J |c_j|^2 = \|f \|_{L^2([N])}^2 - \|g \|_{L^2([N])}^2 \leq 1. 
\end{align}
If we let $A := |B| Q^{o(1)-1}$ denote the trivial operator norm bound for $M_B$, then whenever $\| g \|_{L^2([N])} \leq Q^{-10}$, 
\begin{align}
    \| M_B g \|_{L^2((-N,2N])} \leq A \| g \|_{L^2([N])} \leq X;
\end{align}
given the geometric decay \eqref{e:geomdecay}, this constraint is certainly satisfied for some $J \leq Q^{25\epsilon}$.
\end{proof}

So, matters reduce to analyzing structured atoms, in particular to producing significant gain on 
\[ M_B \Phi \]
when $\Phi \in \mathcal{A}$; we will essentially produce a square-root savings in $|B| \approx Q^\kappa$ which will close the argument.

\section{Analysis of Structured Atoms}\label{s:number}
Taking into account Proposition \ref{p:red-var}, Lemma \ref{l:red-var-l2}, Proposition \ref{p:transferprop}, and mixed-norm interpolation, see \cite[\S 7]{Kold}, the following estimate will close our argument.
\begin{proposition}\label{p:goal0}
    The following bound holds, uniformly in $B$:
    \begin{align}
        \mathfrak m_B \leq Q^{\kappa - 1 -\epsilon}.
    \end{align}
\end{proposition}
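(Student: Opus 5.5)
We argue by contradiction, combining the greedy decomposition of Proposition \ref{p:greedy} with a structured-atom estimate. Let $f$ be an $N$-periodic extremiser (or near-extremiser) with $\|f\|_{L^2([N])}=1$ and $\|M_Bf\|_{L^2}\approx \mathfrak m_B$. We assume $\mathfrak m_B > Q^{\kappa-1-\epsilon}$; since $|B|\approx Q^\kappa$, this exceeds $X=Q^{-1-\epsilon}|B|$ up to constants, so the decomposition is nontrivial.

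Apply Proposition \ref{p:greedy} to write $f=\sum_{j\le J}c_j\Phi_j+g$, with $J\le Q^{25\epsilon}$, $\sum|c_j|^2\le 1$ and $\|M_Bg\|\le X$. Sublinearity of $M_B$ and Cauchy--Schwarz in $j$ give
\[
\mathfrak m_B\lesssim X+J^{1/2}\sup_{\Phi\in\mathcal A}\|M_B\Phi\|_{L^2((-N,2N])}.
\]
It therefore suffices to prove the structured-atom bound
\begin{align}
\sup_{\Phi\in\mathcal A}\|M_B\Phi\|_{L^2((-N,2N])}\lesssim Q^{-1+o(1)}.
\end{align}
This yields $\mathfrak m_B\lesssim Q^{\kappa-1-\epsilon}+Q^{-1+13\epsilon+o(1)}$. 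Since $\kappa\ge 100\epsilon$ by \eqref{e:epskappa}, the second term is dominated by the first, up to adjusting the constant in the exponent (and we may run the argument with slightly smaller $\epsilon$ in $X,\eta$ to absorb the implicit constants).

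For the atom estimate, fix $\Phi(a+ds)=g(a)e_D(\vartheta(a)s)\in\mathcal A_d$ with $D=N/d$. We compute $\mathbb E_{r}\Phi(x-r)w_{Q,B}(r)e_N(r\theta)$ by writing $x=a_0+ds_0$ and $r=u+d\ell$, with $u\in\mathbb Z/d$ and $\ell\in\mathbb Z/D$. Using the CRT decomposition $\mathbb Z/N\cong\mathbb Z/d\times\mathbb Z/D$ (valid since $(d,D)=1$), the average factors as
\[
\mathbb E_{u\le d}\,g(a_0-u)\,e_D(\cdots)\,K_{\theta'}(u),
\]
up to unimodular phases, where $K_{\theta'}(u)$ is the fiber kernel from the introduction and $\theta'$ depends on $\theta$ and $\vartheta(a_0-u)$. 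This gives the pointwise bound
\[
M_B\Phi(x)\le\mathbb E_{u}|g(a_0-u)|\sup_{\theta}|K_\theta(u)|.
\]
Young's inequality on $\mathbb Z/d$ together with $\mathbb E|g|^2=1$ then reduces matters to $\frac1d\sum_{u\le d}\sup_\theta|K_\theta(u)|\lesssim Q^{-1+o(1)}$.

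To bound the kernel, expand $w_{Q,B}=\sum_{q\in B}\frac{\mu(q)}{\phi(q)}c_q$ and average over $\ell$. A term $e(am/q)$ survives the $\ell$-average only when $a/q+\theta/N$ has denominator dividing $d$ on the $D$-component. By CRT, this forces $D/(\theta,D)=q/(q,d)$, and for such $\theta$ the survivors are the $a$ with a prescribed $D$-part. For each $u$ and $\theta$, at most the $q\in B$ with $q/(q,d)=t_\theta$ contribute, and each contributes $\frac{1}{\phi(q)}\bigl|\sum_{a\text{ mod }(q,d)}e(au/(q,d))\bigr|$, a Ramanujan sum $c_{(q,d)}(u)$ over the $d$-part. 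Since $q/(q,d)$ and $d$ determine $q$ only up to the divisor $(q,d)$, we control the count of admissible $q$ by the divisor bound, $Q^{o(1)}$. Averaging in $u$, we use $\frac1d\sum_u|c_{m}(u)|\le\frac1m\sum_{e\mid m}\phi(e)\cdot\#\{\cdots\}\lesssim Q^{o(1)}$ (the mean of $|c_m|$ is $\lesssim 2^{\omega(m)}$). This yields $\frac1d\sum_u\sup_\theta|K_\theta(u)|\lesssim Q^{o(1)}\max_q\phi(q)^{-1}\lesssim Q^{-1+o(1)}$ via \eqref{e:totientbound}.

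The main obstacle is this fiber computation. The supremum over $\theta$ sits inside the $u$-average, so the sparsity constraint must be used to show that for each $\theta$ only $Q^{o(1)}$ values of $q$ survive, uniformly in $u$. It also requires handling the case $(q,d)$ large, where a single $q$ can contribute $|c_{(q,d)}(u)|$ as large as $\phi((q,d))$; we expect this to be absorbed by the $1/d$ averaging, which is the step we would check most carefully.
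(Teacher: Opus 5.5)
Your proposal follows the paper's proof essentially step for step. It uses the greedy decomposition of Proposition \ref{p:greedy} with a Cauchy--Schwarz sum over the $J\le Q^{25\epsilon}$ atoms, reduces each atom via the splitting $r=u+d\ell$ and Young's inequality on $\mathbb Z/d$ to bounding $\mathbb E_{u}\sup_\theta|K_\theta(u)|$, and then applies the fiber computation of Lemma \ref{l:periodicfiber} together with the divisor bound $|B_t|\lesssim Q^{o(1)}$.

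The step you flag, where the supremum over $\theta$ sits inside the $u$-average, closes exactly as in \eqref{e:supLest}. For square-free $d$ and $r\mid d$ one has $|c_r(u)|=\phi((r,u))\le\phi((d,u))$, so $\sup_\theta|K_\theta(u)|\lesssim Q^{o(1)-1}\phi((d,u))$ pointwise. Then $\mathbb E_{u\in[d]}\phi((d,u))=\tau(d)\phi(d)/d\le Q^{o(1)}$. Alternatively, a union bound over the $\tau(d)\le Q^{o(1)}$ possible values of $(q,d)$, combined with your fixed-modulus mean bound for $|c_m|$, gives the same conclusion.
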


We prove Proposition \ref{p:goal0} by analyzing the behavior of $M_B$ when applied to elements of $\mathcal{A}$.  In fact, the corrected structured-atom estimate gives the full bound $Q^{-1+o(1)}$ for each atom.  We then take a trivial $\ell^1$ sum over the coefficients in our atomic representation and conclude, since $\epsilon = \epsilon(p)>0$ is chosen sufficiently small relative to $\kappa$.

For notational ease, set
\begin{align}
    w_B := w_{Q,B} := \sum_{q \in B} \frac{\mu(q)}{\phi(q)} c_q.
\end{align}
For each $d | N, \ d \leq Q$ and $\theta \in [N]$, define
\begin{align}
T_\theta f(x) := \mathbb{E}_{u \in [d]} f(x-u) K_\theta(u)   
\end{align}
where
\begin{align}
K_\theta(u) := \mathbb E_{l\in[D]} w_B(ld+u) e_N((dl+u)\theta);
\end{align}
our argument will be concluded with a suitably delicate analysis of 
\[ \{ K_\theta : \theta \in [N]\}.\]

Below, we regard $d$ as fixed; note that since $N$ is square-free, $(d,D) =1$.

We collect
\begin{align}
    B_t :=\{ q \in B : \frac{q}{(q,d)} = t \}, \; \; \;  t_\theta := \frac{D}{(\theta,D)}, \; \; \; \mathcal{T} := \{ t | D : B_t \neq \emptyset \};
\end{align} 
our first lemma is elementary:
\begin{lemma}\label{l:elemestat}
The following estimates hold:
\begin{align}
\sup_t |B_t| \lesssim Q^{o(1)}, \; \; \; |\mathcal{T}| \leq |B|.   
\end{align}
\end{lemma}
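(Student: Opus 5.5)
Both bounds are elementary counting, and I would treat them separately.

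For $|\mathcal{T}|\le |B|$: the map $q\mapsto q/(q,d)$ sends $B$ onto $\{t: B_t\neq\emptyset\}$, so the sets $B_t$ partition $B$, and $\mathcal{T}$ is contained in the image of this map. Hence $|\mathcal T|\le|B|$. I would also record why each such $t$ divides $D$, so that it genuinely lies in the indexing set. Since $q\mid N$ and $N$ is square-free, $q/(q,d)$ is coprime to $d$ and divides $N$. Therefore it divides $N/d=D$, because $(d,D)=1$.

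For $\sup_t|B_t|\lesssim Q^{o(1)}$: fix $t$. Every $q\in B_t$ has the form $q=t\cdot e$ with $e=(q,d)$ a divisor of $d$, and $q$ is determined by $e$. Thus $|B_t|\le \tau(d)$, the number of divisors of $d$. Since $d\le Q$ in the present setting, the divisor bound $\tau(d)\lesssim_\delta d^{\delta}$ gives $\tau(d)\le Q^{O(1/\log\log Q)}=Q^{o(1)}$.

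There is one point to check. If $d$ were not assumed $\le Q$, the count is still fine: the condition $q\le Q$ forces $e\le Q/t$, so $e$ ranges over divisors of $d$ lying in $(Q/(2t),Q/t]$. Each such $e$ divides $d$, and $d$ divides some $q_0\in B$ with $q_0\le Q$, so $\tau(d)\le\tau(q_0)=Q^{o(1)}$ anyway.

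I do not expect any genuine obstacle. The only care needed is the square-free and coprimality bookkeeping, namely $(d,D)=1$ and $t\mid D$, and quoting the divisor bound.
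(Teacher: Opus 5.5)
Your proposal is correct and follows the paper's proof. Fixing $t$, each $q\in B_t$ is determined by the divisor $(q,d)$ of $d$, which gives $|B_t|\le\tau(d)\le Q^{o(1)}$; and each $q\in B$ yields exactly one value $q/(q,d)$, which gives $|\mathcal{T}|\le|B|$. Your extra check that $q/(q,d)$ divides $D$ is harmless bookkeeping the paper leaves implicit, and your fallback for $d>Q$ is vacuous, since $d\mid q_0\le Q$ already forces $d\le Q$.
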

\begin{proof}
For the first point, express $r = (q,d)$ so $q = rt$ where $(r,t) = 1$. For fixed $t$, once $r$ is chosen, there's only one possible choice of $q$, and there are at most $\tau(d) \leq Q^{o(1)}$ many choices of $r$.

For the second, just observe that each $q \in B$ contributes exactly one value 
\[ \frac{q}{(q,d)} = t.\]
\end{proof}

We have the following characterization.

\begin{lemma}\label{l:periodicfiber}
   $K_\theta \equiv 0$ unless $t_\theta \in \mathcal{T}$. And, in the other case,
   \begin{align}
   K_\theta(u) = e_d( (\overline{D})_d \theta u) L_{t_\theta}(u)
\end{align}
where $D (\overline{D})_d \equiv 1 \mod d$,
and
\begin{align}
    L_t(u) := \sum_{q \in B_t} \frac{\mu(q)}{\phi(q)} c_{(q,d)}(u).
\end{align}
\end{lemma}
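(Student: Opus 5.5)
The plan is to compute $K_\theta(u)$ explicitly. First we expand $w_B=\sum_{q\in B}\frac{\mu(q)}{\phi(q)}c_q$ and factor each Ramanujan sum via the Chinese remainder theorem. Then we evaluate the average over $l\in[D]$ by orthogonality of characters on $\mathbb Z/D$.

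\textbf{Splitting each modulus.} Fix $q\in B$ and write $q=rt$ with $r=(q,d)$ and $t=q/(q,d)$. Since $N$ is square-free and $q\mid N$, we have $(r,t)=1$, $r\mid d$, and $t\mid D$ (because $(d,D)=1$ and $t$ is coprime to $d/r$). Ramanujan sums are multiplicative in the modulus, so $c_q=c_r\,c_t$. Since $r\mid d$,
\[
c_r(ld+u)=c_r(u),
\]
and this factor can be pulled out of the $l$-average. The phase factors as
\[
e_N((dl+u)\theta)=e_N(u\theta)\,e_D(l\theta).
\]
We are therefore left with
\[
\mathbb E_{l\in[D]}\,c_t(ld+u)\,e_D(l\theta)=\sum_{(b,t)=1}e(bu/t)\,\mathbb E_{l\in[D]}\,e_D\big(l(bd\,D/t+\theta)\big).
\]

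\textbf{Orthogonality.} The inner average is $\mathbf 1\{\theta\equiv -bd\,D/t \bmod D\}$. Since $(bd,t)=1$, any such $\theta$ has $(\theta,D)=D/t$, that is, $t_\theta=t$.

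Consequently, if $t_\theta\notin\mathcal T$, no $q\in B$ contributes and $K_\theta\equiv 0$. Otherwise, only the $q\in B_{t_\theta}$ survive, and for each of them there is exactly one admissible $b$. To identify it, write $\theta=(D/t)\theta'$ with $(\theta',t)=1$; then $b\equiv-\theta'\,\overline{d}_t \bmod t$.

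\textbf{Recombining the phases.} This is the only delicate step: we must show the surviving phase $e(bu/t)\,e_N(u\theta)$ is independent of $q\in B_t$ and equals $e_d(\overline D_d\theta u)$. We use
\[
e_N(u\theta)=e\big(u\theta'/(dt)\big)
\]
together with the CRT identity $\overline{t}_d\,t+\overline{d}_t\,d\equiv1 \bmod dt$. This gives
\[
\frac{u\theta'}{dt}\equiv \frac{u\theta'\,\overline t_d}{d}+\frac{u\theta'\,\overline d_t}{t}\pmod 1.
\]
The second term cancels exactly against $e(bu/t)$. In the first term, $\theta'\,\overline t_d\equiv\theta\,\overline{(D/t)}_d\,\overline t_d\equiv\theta\,\overline D_d \bmod d$. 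Hence the combined phase is $e_d(\overline D_d\theta u)$, which depends only on $\theta$ and $d$, not on $q$.

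Summing the remaining factors $\frac{\mu(q)}{\phi(q)}c_{(q,d)}(u)$ over $q\in B_{t_\theta}$ yields $L_{t_\theta}(u)$, which proves the lemma.
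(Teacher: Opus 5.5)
Your proof is correct and is essentially the paper's argument. The paper likewise expands $c_q$, uses orthogonality in $l$ to force $(\theta,D)=D/(q,D)$ with a unique admissible residue modulo $(q,D)$, and identifies the surviving phase as $e_d((\overline{D})_d\theta u)$ times $c_{(q,d)}(u)$. The differences are only bookkeeping: you factor $c_q=c_{(q,d)}\,c_{q/(q,d)}$ by multiplicativity up front, where the paper splits the residue $a$ by CRT into a free part modulo $(q,d)$ and a forced part modulo $(q,D)$. You also reconcile the phases with $\overline{t}_d\,t+\overline{d}_t\,d\equiv 1 \pmod{dt}$, where the paper computes $m_a \bmod d$ directly.
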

The significance of this lemma is that the kernels $\{ L_t \}$ have very small operator norms, in that, by Lemma \ref{l:periodicfiber} and Lemma \ref{l:elemestat}, 
\begin{align}\label{e:supLest}
    \mathbb{E}_{u \in [d]} \sup_t  |L_t(u)| &\lesssim Q^{o(1)-1 } \mathbb{E}_{u \in [d]} \max_{r | d} |c_r(u)| \\
    & \lesssim Q^{o(1)-1} \mathbb{E}_{u \in [d]} \phi\big((d,u)\big) \\
    &\lesssim Q^{o(1) -1},
\end{align}
where we used the square-free nature of $d$ to simplify
\[ |c_r(u)|=\phi((r,u)) \qquad r|d,\]
and therefore
\begin{align}
    \max_{r|d}|c_r(u)|=\phi\big((d,u) \big).
\end{align}

\begin{proof}[Proof of Lemma \ref{l:periodicfiber}]
Express
    \begin{align}
    K_\theta(u) = \mathbb{E}_{l \in [D]} w_B(ld+u) e_N((dl + u)\theta)  = \sum_{q \in B} \frac{\mu(q)}{\phi(q)} A_q(\theta,u),
\end{align}
where
\begin{align}
    A_q(\theta,u) &:= \mathbb{E}_{l \in [D]} c_q(ld + u) e_N( (ld + u ) \theta) \\
    & = \sum_{(a,q) = 1} \mathbb{E}_{l \in [D]} e_N( (Na/q + \theta)(ld+u) )\\
    &= \sum_{(a,q) =1} e_N((Na/q + \theta)u) \cdot \mathbb{E}_{l \in [D]} e_N( (Na/q + \theta) dl) \\
    &= \sum_{(a,q) =1} e_N((Na/q + \theta)u) \cdot \mathbb{E}_{l \in [D]} e_D( (Na/q + \theta) l) \\
    & = \sum_{\substack{(a,q) =1 \\ N\frac{a}{q} + \theta \equiv 0 \mod D}} e_N( (Na/q + \theta) u ).
\end{align}

Since $(d,D) = 1$, every divisor $q | N$ factors uniquely as
\[ q = q_1q_2, \; \; \; q_1 = (q,d), \; \; \; q_2 = (q,D),\]
where $(q_1,q_2) = 1$, since $q$ is square-free. So,
\begin{align}
    \frac{N}{q} = \frac{d}{q_1} \cdot \frac{D}{q_2},
\end{align}
and thus the congruence relation becomes
\begin{align}
    \theta + \frac{d}{q_1} \frac{D}{q_2} a \equiv 0 \mod D,
\end{align}
which implies, after reducing $\mod \frac{D}{q_2}$, that
\begin{align}
    \frac{D}{q_2} | \theta \Rightarrow \theta = \frac{D}{q_2} t
\end{align}
for some $t \in \mathbb{Z}$.

Dividing by $\frac{D}{q_2}$ and re-arranging, we see that
\begin{align}
    t + \frac{d}{q_1} a \equiv 0 \mod q_2.
\end{align}
The claim now is that $\frac{d}{q_1}$ is invertible $\mod q_2$: this is since $q_2|D$, and $(d,D) = 1$; we conclude that there is a unique residue $\alpha \in [q_2]$ so that
\begin{align}
    a \equiv \alpha \equiv - \overline{ (\frac{d}{q_1}) }_{q_2} \cdot t \mod q_2,
\end{align}
where
\begin{align}
    \frac{d}{q_1} \cdot \overline{ (\frac{d}{q_1}) }_{q_2} \equiv 1 \mod q_2.
\end{align}

Finally, since $(a,q) = 1 \Rightarrow (a,q_2) = 1$, we must have 
\begin{align}
    (\alpha,q_2) = ( \overline{ (\frac{d}{q_1}) }_{q_2} \cdot  t , q_2) = 1 \Rightarrow (t,q_2) = 1.
\end{align}

To summarize, if $K_\theta \neq 0$, we must have
\begin{align}
\frac{D}{q_2} | \theta =: \frac{D}{q_2} t \qquad \text{and} \qquad ( \frac{\theta}{D/q_2},q_2) = (t,q_2) = 1; \end{align}
this means
\[ (\theta,D) = \frac{D}{q_2}(t,q_2) = \frac{D}{q_2} \Rightarrow q_2 = (q,D) = \frac{q}{(q,d)} = \frac{D}{(\theta,D)}.\]

In particular, we have shown that there must exist an $a, \ (a,q) =1$ with
\[ D | (Na/q + \theta) \iff (q,D) = \frac{D}{(\theta,D)}\]
and thus $A_q(\theta,u)$ vanishes unless
\[ q_2 = (q,D) = \frac{D}{(\theta,D)}.\]

With this in mind, below we reduce our sum over $a, \ (a,q) =1$ to only those so that
\[ a \equiv \alpha \mod q_2,\]
where $\alpha$ is the unique residue class determined above, and in particular satisfies $(\alpha,q_2) =1$.

So, assume that
\[ q_2 = (q,D) = \frac{q}{(q,d)} = \frac{D}{(\theta,D)},\]
and compute $A_q(\theta,u)$.

\begin{align}
    A_q(\theta,u) = \mathbf{1}_{q_2 = \frac{D}{(\theta,D)}} \cdot \sum_{\substack{(a,q) =1 \\ N\frac{a}{q} + \theta \equiv 0 \mod D}} e_N( (Na/q + \theta) u );  
\end{align}
we have seen that the set of $a$ so that $(a,q) =1$ so that the sum does not vanish must satisfy $a \equiv \alpha \mod q_2$ for some unique $\alpha$ with
\begin{align}
    \frac{d}{q_1} \alpha + \frac{\theta}{D/q_2} \equiv 0 \mod q_2
\end{align}
and
\begin{align}
a \equiv x \mod q_1, \; \; \; (x,q_1) = 1    
\end{align}
for some unique $x$.

If we express
\begin{align}
    \frac{N}{q} a + \theta = D m_a,
\end{align}
then
\begin{align}
e_N((\frac{N}{q} a + \theta)u) = e_d(m_au),
\end{align}
and our job is to compute $m_a \mod d$. Since $D$ is coprime to $d$, $(D,d) =1$, its multiplicative inverse $\mod d$, $(\overline{D})_d$, is well-defined. Then, working $\mod d$:
\begin{align}
m_a \equiv (\overline{D})_d \cdot (\frac{N}{q} a + \theta) \equiv (\overline{D})_d \cdot ( \frac{d}{q_1} \frac{D}{q_2} a + \theta)  \equiv \frac{d}{q_1} (\overline{q_2})_d \cdot a + (\overline{D})_d \cdot \theta,
\end{align}
where $q_2 (\overline{q_2})_d \equiv 1 \mod d$, and $(\overline{q_2})_d$ is well defined, since $(q_2,d) = 1$.

Since
\begin{align}
     \frac{d}{q_1} (\overline{q_2})_d \cdot a \mod d
\end{align}
depends only on $a \mod q_1$, i.e.\ on $x$, working $\mod d$, we may express
\begin{align}
    m_a \equiv \frac{d}{q_1} (\overline{q_2})_d \cdot a + (\overline{D})_d \cdot \theta \equiv \frac{d}{q_1} (\overline{q_2})_d \cdot x + (\overline{D})_d  \cdot \theta \mod d,
\end{align}
so
\begin{align}
    e_d(m_a u) &= e_d((\overline{D})_d  \cdot \theta u)\cdot  e_d(\frac{d}{q_1} (\overline{q_2})_d  \cdot  x u) \\
    & = e_d((\overline{D})_d  \cdot\theta u) \cdot e_{q_1}((\overline{q_2})_d  \cdot x u);
\end{align}
since multiplication by $(\overline{q_2})_d$ permutes $(x,q_1) = 1$, we have
\begin{align}
    \sum_{(x,q_1) =1} e_{q_1}((\overline{q_2})_d  \cdot x u) = c_{q_1}(u) = c_{(q,d)}(u).
\end{align}
And, since we have derived a unique constraint on $a \mod q_2$, this shows that \begin{align}
    A_q(\theta,u) &= e_d((\overline{D})_d  \cdot \theta u) \cdot c_{(q,d)}(u)\cdot  \mathbf{1}_{(q,D)= D/(\theta,D)} \\
    &= e_d((\overline{D})_d  \cdot\theta u) \cdot  c_{(q,d)}(u) \cdot \mathbf{1}_{q/(q,d) = D/(\theta,D)};
\end{align}
the upshot is that we can express
    \begin{align}
        K_\theta(u) = e_d((\overline{D})_d  \cdot \theta u) \sum_{\substack{q \in B \\ q/(q,d) = D/(\theta,D)}} \frac{\mu(q)}{\phi(q)} c_{(q,d)}(u),
    \end{align}
which completes the proof.
\end{proof}

With this representation in mind, we quickly conclude the argument in the following subsection.
\subsection{The Maximal Function on Structured Atoms}
\begin{proposition}\label{p:structuredatom}
Suppose that
\[ f(a+ds) = g(a) e_D(\vartheta(a) s) \in \mathcal{A}, \]
namely $\mathbb{E}_{a \in [d]} |g(a)|^2 = 1$. Then
\begin{align}
    \| M_B f \|_{L^2( (-N,2N] )} \lesssim Q^{o(1)-1}.
\end{align}
\end{proposition}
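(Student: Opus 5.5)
The plan is to compute $M_Bf$ on the atom exactly, reduce it to a convolution on $\mathbb{Z}/d$ against the kernels $K_\theta$, and then apply the fiber bound \eqref{e:supLest}. The idea is that the structured atom is, on each residue class modulo $d$, a single character of $\mathbb{Z}/N$. That character can be absorbed into the modulation parameter $\theta$. Because the estimate pulls the supremum inside the $u$-average, the dependence of the resulting shift on the residue class is harmless.

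First I rewrite the atom in Chinese-remainder coordinates. Since $(d,D)=1$, we have $\mathbb{Z}/N\cong\mathbb{Z}/d\times\mathbb{Z}/D$. For $x\equiv a \bmod d$ (with $a\in[d]$), the coordinate $s$ with $x=a+ds$ is $s\equiv(\overline{d})_D(x-a)\bmod D$. Hence
\[
e_D(\vartheta(a)s)=e_D\big(\vartheta(a)(\overline d)_D(x-a)\big)=\gamma(a)\,e_N(\eta(a)x),\qquad \eta(a):=d\,(\overline d)_D\,\vartheta(a),
\]
with $|\gamma(a)|=1$. Thus $f(x)=\tilde g(x \bmod d)\,e_N(\eta(x\bmod d)\,x)$ with $|\tilde g|=|g|$. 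Working with residues rather than the representatives $a\in[d]$ is what eliminates the carry issues that appear when one writes $x-r$ in the form $a+ds$. Next, for fixed $x$ and $\theta$, I split $r=u+dl$ with $u\in[d]$ and $l\in[D]$, so that $x-r\equiv x-u \bmod d$ and
\[
f(x-r)=\tilde g(x-u)\,e_N\big(\eta(x-u)x\big)\,e_N\big(-\eta(x-u)(u+dl)\big).
\]
Substituting this into $\mathbb{E}_{r\in\mathbb{Z}/N} f(x-r)w_B(r)e_N(r\theta)$ and performing the $l$-average gives
\[
\mathbb{E}_{u\in[d]}\,\tilde g(x-u)\,e_N\big(\eta(x-u)x\big)\,K_{\theta-\eta(x-u)}(u),
\]
with $K_\theta$ exactly as defined before Lemma \ref{l:periodicfiber}.

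I then take absolute values and the supremum over $\theta$, bounding each shifted kernel by its supremum over all frequencies:
\[
M_Bf(x)\le \mathbb{E}_{u\in[d]}|g(x-u)|\,k(u),\qquad k(u):=\sup_{\theta}|K_\theta(u)|.
\]
By Lemma \ref{l:periodicfiber}, either $K_\theta\equiv0$ or $|K_\theta(u)|=|L_{t_\theta}(u)|$, so $k(u)\le\sup_t|L_t(u)|$. The right-hand side above depends only on $x\bmod d$, so its $L^2((-N,2N])$ norm equals its $L^2(\mathbb{Z}/d)$ norm. By Minkowski's inequality (Young's inequality on $\mathbb{Z}/d$),
\[
\|M_Bf\|_{L^2((-N,2N])}\le \|g\|_{L^2(\mathbb{Z}/d)}\,\mathbb{E}_{u\in[d]}\sup_t|L_t(u)|\lesssim Q^{o(1)-1}
\]
by \eqref{e:supLest}, using $\mathbb{E}_{a\in[d]}|g(a)|^2=1$.

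I expect the main obstacle to be bookkeeping rather than analysis. One has to check carefully that the $s$-dependence of the atom really is a global character on each residue class once we pass to CRT coordinates. One must also check that the $l$-average produces precisely $K_{\theta-\eta}(u)$, including the phase $e_N(\eta u)$ being absorbed into the kernel's own $e_N((dl+u)\theta')$ factor. A further point to confirm is that $d\le Q$ and $d\mid N$, so that Lemma \ref{l:periodicfiber} and \eqref{e:supLest} apply to this $d$. This holds because $d\mid q$ for some $q\in B$, so $d\le q\le Q$ and $d\mid q\mid N$.
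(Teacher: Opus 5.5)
Your proposal is correct and follows the paper's proof. In both, one splits $r=u+dl$ and averages in $l$ to obtain a frequency-shifted fiber kernel. That kernel is bounded uniformly in frequency by $\sup_{t\in\mathcal T}|L_t(u)|$ via Lemma \ref{l:periodicfiber}, and the argument concludes with Young's inequality on $\mathbb{Z}/d$ and \eqref{e:supLest}. The only difference is bookkeeping. You rewrite each fiber of the atom as a global character $e_N(\eta(a)x)$ with $\eta(a)=d(\overline{d})_D\vartheta(a)$, which is a particular lift of $\vartheta(a)$. The paper instead tracks the carry $\epsilon(a,u)$ explicitly and works with the reduced kernel $\widetilde K_\rho$ and an arbitrary lift.
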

\begin{proof}
For $\rho\in\mathbb Z/D$ define
\begin{align}
    \widetilde K_\rho(u):=\mathbb E_{\ell\in[D]}w_B(u+d\ell)e_D(\ell\rho).
\end{align}
If $\rho\in\mathbb Z/D$ and $\Theta\in\mathbb Z/N$ is any lift of $\rho$, then
\begin{align}
    K_\Theta(u)=e_N(u\Theta)\widetilde K_\rho(u).
\end{align}
Lemma \ref{l:periodicfiber} therefore implies the pointwise envelope
\begin{align}\label{e:Henv}
    |\widetilde K_\rho(u)|\leq H_d(u):=\sup_{t\in\mathcal T}|L_t(u)|
\end{align}
for every $\rho\in\mathbb Z/D$.

Fix $\Theta\in\mathbb Z/N$ and write
\begin{align}
A_\Theta f(x):=\mathbb E_{r\in\mathbb Z/N}f(x-r)w_B(r)e_N(r\Theta).
\end{align}
If we let $x=a+ds$, write $r=u+d\ell$, and let $a_u=[a-u]_d$ be the residue class of $a-u$ modulo $d$, we may choose $\epsilon(a,u)\in\mathbb Z$ so that
\begin{align}
    a-u=a_u-d\epsilon(a,u),
\end{align}
and thus
\begin{align}
    a+ds-u-d\ell=a_u+d(s-\epsilon(a,u)-\ell),
\end{align}
so
\begin{align}
&A_\Theta f(a+ds) \\
&=\mathbb E_{u\in[d]}g(a_u)e_D(\vartheta(a_u)(s-\epsilon(a,u)))e_N(u\Theta)
    \mathbb E_{\ell\in[D]}w_B(u+d\ell)e_D(\ell(\Theta-\vartheta(a_u))) .
\end{align}
Using \eqref{e:Henv}, we obtain the pointwise bound
\begin{align}
    |A_\Theta f(a+ds)|\leq \mathbb E_{u\in[d]}|g(a-u)|H_d(u),
\end{align}
where $g$ is extended periodically modulo $d$.  Since the right hand side is independent of $s$, Young's inequality on $\mathbb Z/d$ gives
\begin{align}
\|M_Bf\|_{L^2(\mathbb Z/N)}
&\leq \big\|\mathbb E_{u\in[d]}|g(\cdot-u)|H_d(u)\big\|_{L^2(\mathbb Z/d)}\\
&\leq \|g\|_{L^2(\mathbb Z/d)}\mathbb E_{u\in[d]}H_d(u) \\
& =\mathbb E_{u\in[d]}H_d(u) \\
& \leq Q^{o(1)-1}
\end{align}
by \eqref{e:supLest}.
\end{proof}

In the language of Proposition \ref{p:greedy}, we conclude as follows, taking into account \eqref{e:epskappa} above. 
\begin{cor}
    Suppose that $\| f \|_{L^2([N])} = 1$, and that
        \begin{align}
        f = \sum_{j \leq J} c_j \Phi_j + g
    \end{align}
    is the decomposition of Proposition \ref{p:greedy}.
    Then
    \begin{align}
      \| M_B f \|_{L^2((-N,2N])} &\leq   \| M_B ( \sum_{j \leq J} c_j \Phi_j ) \|_{L^2((-N,2N])} + \| M_B g \|_{L^2((-N,2N])} \\
      &\lesssim Q^{-1+15\epsilon} + Q^{\kappa - 1- \epsilon} \\
      & \lesssim Q^{\kappa - 1 -\epsilon}.
    \end{align}
\end{cor}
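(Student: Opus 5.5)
The plan is to apply the triangle inequality to the decomposition of Proposition \ref{p:greedy} and bound the two pieces separately. The first is the structured part $\sum_{j\le J}c_j\Phi_j$; the second is the remainder $g$, whose bound is built into the stopping rule.

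\emph{Step 1: the remainder.} The bound $\|M_Bg\|_{L^2((-N,2N])}\le X=Q^{-1-\epsilon}|B|$ holds by construction, since this is exactly the stopping condition \eqref{e:query}. With $|B|\approx Q^{\kappa}$ this gives $\|M_Bg\|\lesssim Q^{\kappa-1-\epsilon}$.

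\emph{Step 2: the structured part.} Since $M_B$ is a supremum of absolute values of linear operators, it is sublinear. Hence pointwise
\[
M_B\Big(\sum_{j\le J} c_j\Phi_j\Big)\le \sum_{j\le J}|c_j|\,M_B\Phi_j .
\]
Taking $L^2((-N,2N])$ norms and applying Proposition \ref{p:structuredatom} to each atom gives
\[
\Big\|M_B\Big(\sum_{j\le J} c_j\Phi_j\Big)\Big\|_{L^2((-N,2N])}\lesssim Q^{o(1)-1}\sum_{j\le J}|c_j| .
\]
Each $\Phi_j\in\mathcal A$ has the form $g(a)e_D(\vartheta(a)s)$, which is exactly the hypothesis of that proposition, with normalization $\mathbb E_{a\in[d]}|g(a)|^2=1$. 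One point needs checking: the norm is over $(-N,2N]$ rather than $\mathbb Z/N$, but everything is $N$-periodic, so the normalized norms agree. By Cauchy--Schwarz,
\[
\sum_{j\le J}|c_j|\le J^{1/2}\Big(\sum_{j\le J}|c_j|^2\Big)^{1/2}\le Q^{25\epsilon/2},
\]
using $J\le Q^{25\epsilon}$ and $\sum_j|c_j|^2\le1$. Absorbing the $Q^{o(1)}$ factor into a further $Q^{\epsilon/2}$ (for $Q$ large, with small $Q$ handled by the implicit constant), the structured part is $\lesssim Q^{-1+13\epsilon}\le Q^{-1+15\epsilon}$.

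\emph{Step 3: combine.} By \eqref{e:epskappa} we have $\epsilon\le\kappa/100$, so $-1+15\epsilon\le \kappa-1-\epsilon$. The first term is therefore dominated by the second, and the claimed bound $\lesssim Q^{\kappa-1-\epsilon}$ follows.

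The only genuinely delicate point is the sublinearity step: the decomposition involves atoms attached to different divisors $d$, so there is no common structure to exploit. The argument instead relies on the atom-by-atom bound of Proposition \ref{p:structuredatom} being uniform over $d$, and on the $J^{1/2}$ loss being only $Q^{O(\epsilon)}$. Everything else is bookkeeping of exponents.
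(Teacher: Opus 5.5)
Your proposal is correct and is essentially the paper's argument. The ingredients match one for one: sublinearity of $M_B$; the stopping-rule bound $\|M_Bg\|_{L^2((-N,2N])}\le X=Q^{-1-\epsilon}|B|$ for the remainder; the per-atom bound $Q^{o(1)-1}$ from Proposition \ref{p:structuredatom}, summed over the coefficients; and \eqref{e:epskappa} to absorb the atomic term into $Q^{\kappa-1-\epsilon}$. Your Cauchy--Schwarz step $\sum_j|c_j|\le J^{1/2}\le Q^{25\epsilon/2}$ spells out the paper's ``trivial $\ell^1$ sum over the coefficients'' and accounts for its exponent $-1+15\epsilon$.
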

This establishes Proposition \ref{p:goal0}, and with it, Theorems \ref{t:main} and \ref{t:carmax}.

\appendix

\section{Complements}\label{a:comp}

\subsection{On Admissibility}
In \cite{FKT1,FKT2}, a category of \emph{admissible} weights were introduced, for which weighted double recurrence and return times theorems were established:

A weight, $w \geq 0$ is \emph{admissible} if it satisfies the following properties:

    \begin{itemize}
        \item \textbf{Upper normalization:} There exists an absolute constant $0 < C < \infty$ so that for any $0 \leq C_0 < \infty$, 
    \begin{align}
        \limsup_{N \to \infty} \, \frac{1}{N} \sum_{n \in (C_0N,C_0N +N]} w(n) \leq C;
    \end{align}
        \item \textbf{Heath-Brown-type approximation}: There exists an absolute constant $C < \infty$, a function 
        \[ S \coloneqq S_w \colon \mathbb{Q} \to \mathbb{C} \text{ so that } |S(a/q)| \leq C \cdot q^{o(1)-1},\]
        and a constant $\nu > 4$, so that for some $s \geq 3$ and all sufficiently large $N$, there exists a truncation parameter $M = M(N) \leq N^{o(1)}$ so that
        \begin{align}
            \big\| w(n) - \sum_{q\leq M}\sum_{\substack{1\leq a\leq q\\ (a,q)=1}} S(a/q) e^{2 \pi i n a/q} \big\|_{U^s([N])} \leq C \cdot \log^{-\nu/2^s} N.
        \end{align}
    \end{itemize}

Below we explain why this condition is \emph{insufficient} to establish a weighted Carleson-type theorem:

\begin{lemma}
For $0 < \alpha < 1$, and an integer $q_0 \geq 3$, let
\begin{align}
    v(n) := v_{\alpha,q_0}(n) := 1 + \alpha \sin(2 \pi n/q_0).
\end{align}
Then $v$ is admissible, but
\begin{align}
    \mathcal{C}_v f(x) := \sup_\lambda \big|\sum_n f(x-n) v(|n|) \frac{e(\lambda n)}{n} \big|
\end{align}
is unbounded on $\ell^p$ for each $1 \leq p \leq \infty$.
\end{lemma}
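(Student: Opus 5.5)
The plan is to check the two admissibility conditions directly, using that $v$ is a finite trigonometric polynomial. Then I would disprove boundedness by exhibiting a modulation $\lambda$ at which the kernel $v(|n|)e(\lambda n)/n$ contains the non-integrable, non-cancelling piece $1/|n|$. This piece produces a logarithmic divergence on indicator functions of long intervals.

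\emph{Admissibility.} Since $0<\alpha<1$ we have $0\le 1-\alpha\le v\le 1+\alpha$, so upper normalization holds with $C=2$. For the Heath-Brown-type approximation I will write
\[
v(n)=1+\tfrac{\alpha}{2i}\big(e(n/q_0)-e(-n/q_0)\big).
\]
I then set $S(0/1)=1$, $S(1/q_0)=\tfrac{\alpha}{2i}$, $S((q_0-1)/q_0)=-\tfrac{\alpha}{2i}$, and $S\equiv0$ otherwise; the residues $1$ and $q_0-1$ are distinct and coprime to $q_0$ because $q_0\ge3$. For $N$ large, any truncation $M(N)\ge q_0$ with $M(N)\le N^{o(1)}$ represents $v$ exactly, so the $U^s$ error vanishes. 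The bound $|S(a/q)|\le C q^{o(1)-1}$ only involves $q\in\{1,q_0\}$, so it holds with $C=C(q_0)$.

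\emph{Unboundedness.} For $n\neq0$, using $\sin(2\pi|n|/q_0)=\operatorname{sgn}(n)\sin(2\pi n/q_0)$,
\[
\frac{v(|n|)}{n}=\frac1n+\frac{\alpha}{2i}\cdot\frac{e(n/q_0)-e(-n/q_0)}{|n|}.
\]
I choose $\lambda=-1/q_0$. The operator at this frequency then splits as
\[
\sum_{n\ne0} f(x-n)\frac{e(-n/q_0)}{n}+\frac{\alpha}{2i}\sum_{n\ne0}\frac{f(x-n)}{|n|}-\frac{\alpha}{2i}\sum_{n\ne0}f(x-n)\frac{e(-2n/q_0)}{|n|}.
\]
I test on $f=\mathbf 1_{[-L,L]}$ and restrict to $|x|\le L/2$, so that $n$ ranges over the interval $[x-L,x+L]\ni 0$, which contains $[-L/2,L/2]$.
\begin{itemize}
\item The middle term is $\ge \tfrac{\alpha}{2}\cdot 2\log(L/2)$ in modulus.
\item In the first term, multiply by $e(-n/q_0)$. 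By partial summation, the exponential sums $\sum e(\pm n/q_0)$ over ranges of integers are $O(q_0)$, and $1/|n|$ is monotone on each side of $0$; hence sums of the form $\sum_{a\le |n|\le b}e(\beta n)/|n|$ with $\beta\in\{-1/q_0,-2/q_0\}$ are $O_{q_0}(1)$ uniformly, because $\beta\not\equiv0 \pmod 1$. This uses $q_0\ge3$, so that $2/q_0\notin\mathbb Z$.
\item The same bound controls the third term.
\end{itemize}
Therefore $\mathcal C_v f(x)\ge \alpha\log L-O_{q_0}(1)$ on a set of $\approx L$ points. For $1\le p<\infty$ this gives $\|\mathcal C_v f\|_{\ell^p}\gtrsim \alpha\log L\cdot L^{1/p}$, while $\|f\|_{\ell^p}\approx L^{1/p}$. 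For $p=\infty$ the same computation gives $\mathcal C_v f(0)\gtrsim\log L$ with $\|f\|_{\ell^\infty}=1$. Letting $L\to\infty$ yields unboundedness.

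The main obstacle is making the uniform $O_{q_0}(1)$ bounds on the oscillatory pieces clean, uniformly over the asymmetric window $[x-L,x+L]$. I would handle this with a single summation-by-parts lemma for $\sum_{a\le n\le b}e(\beta n)/n$ with $\|\beta\|\ge 1/q_0$, applied separately on the positive and negative halves of the window.
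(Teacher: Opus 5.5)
Your proof is correct and follows the paper's argument. Admissibility is immediate because $v$ is a nonnegative trigonometric polynomial whose Heath-Brown approximation is exact, and unboundedness comes from testing on $\mathbf 1_{[-L,L]}$ at a modulation $\lambda=\pm 1/q_0$ that exposes a non-cancelling $\frac{\alpha}{2}\sum 1/|n|\approx\alpha\log L$ term. The remaining oscillatory pieces are $O_{q_0}(1)$ by summation by parts, since $2/q_0\notin\mathbb Z$.

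The only cosmetic difference is that the paper first truncates to the symmetric window $|n|\le 3L/4$ (with a trivial $O(1)$ tail) and pairs $n$ with $-n$ to produce $\sin$ and $\sin^2$ sums. You instead keep the full asymmetric window $[x-L,x+L]$ and run summation by parts on each half separately.
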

\begin{proof}
Observe that, since $0 < \alpha < 1$, $v \geq 0$; by periodicity its averages on every interval whose length tends to $\infty$ are uniformly bounded, and we can express
\begin{align}
    v(n) := 1 + \frac{\alpha}{2i} e(n/q_0) - \frac{\alpha}{2i} e(n(q_0-1)/q_0)
\end{align}
so the Heath-Brown approximation is satisfied \emph{exactly}. To prove that $\mathcal{C}_v$ is unbounded, we apply dimensional analysis, testing it on
\[ f(x) := f_L(x) := \mathbf{1}_{[-L,L]};\]
note that whenever $|x| \leq L/4$, for each $|m| \leq 3L/4$, both $x\pm m \in [-L,L]$.

With this in mind, choosing $\lambda = \frac{1}{q_0}$, we compute
\begin{align}
    \mathcal{C}_v f(x) &\geq \big| \sum_{|n| \leq 3L/4} f_L(x-n) v(|n|) \frac{e(n/q_0)}{n}\big| - O(1) \\
    &= \big| \sum_{|m| \leq 3L/4} v(m) \frac{e(m/q_0) - e(-m/q_0)}{m}\big| - O(1) \\
    & = \big| 2i \sum_{m \leq 3L/4} \frac{\sin(2\pi m/q_0)}{m} + 2i \alpha \sum_{m \leq 3L/4} \frac{\sin^2(2\pi m/q_0)}{m} \big| - O(1) \\
    & = 2 \alpha \big| \sum_{m \leq 3L/4} \frac{\sin^2(2\pi m/q_0)}{m} \big| - O_{q_0}(1),
\end{align}
where we applied summation by parts to address the first sum. Since $q_0 \geq 3$, the frequency $\frac{2}{q_0} \notin \mathbb{Z}$, so we express
\begin{align}
    \sum_{m \leq 3L/4} \frac{\sin^2(2\pi m/q_0)}{m} &= \frac{1}{2} \sum_{m \leq 3L/4} \frac{1}{m} - \frac{1}{2} \sum_{m \leq 3L/4} \frac{\cos(4 \pi m/q_0)}{m} \\
    & = \log L + O(1)
\end{align}
by summation by parts.

In particular
\begin{align}
    \mathcal{C}_v f(x) \gtrsim \alpha \log L \cdot \mathbf{1}_{|x| \leq L/4}
\end{align}
which yields the result.
\end{proof}

More generally, suppose a real periodic model has Fourier coefficients with
\[ S(-a/q) = \overline{S(a/q)};\]
its sine component at $a/q$ is non-zero exactly when $S(a/q) \notin \mathbb{R}$, in which case adapting the preceding proof produces a logarithmic divergence. In particular, a natural symmetry hypothesis is
\[ S(-a/q) = S(a/q) \in \mathbb{R};\]
note that this condition is satisfied in the case of the primes, in which case
\[ S(a/q) = \frac{\mu(q)}{\varphi(q)}.\]

\subsection{Extension to Piatetski-Shapiro Times}
Our methods are sufficiently flexible to extend to sparse subsets of the primes, namely to Piatetski-Shapiro primes, i.e.\ those primes which are also elements of the following set:
\[\mathbb{N}_c := \{ k = \lfloor n^c \rfloor \text{ for some } n \in \mathbb{N} \}.\]
In particular, if we let
\begin{align}
    w_c(n) := w(n) \cdot c |n|^{1-1/c} \cdot \mathbf{1}_{\mathbb{N}_c}(|n|)
\end{align}
denote the natural density on the Piatetski-Shapiro primes, then we may replace
\[ w\longrightarrow w_c, \; \; \; 1 \leq c < 7/6\]
in the statement of Theorem \ref{t:main}. For notational ease, we sketch the argument for the maximal formulation below:

Let 
\[ W_c(n) := w(n) \cdot ( 1 - c |n|^{1-1/c} \mathbf{1}_{\mathbb{N}_c}(|n|));\]
by \cite{FKT1} we know that there exists an $\epsilon > 0$ so that
\begin{align}
\| \sup_\lambda | \mathbb{E}_{n \in [N]} f(x-n) W_c(n) e(\lambda n) | \|_{\ell^2} \lesssim N^{-\epsilon} \|  f \|_{\ell^2}.    
\end{align}

\begin{lemma}\label{l:Pscar}
 The following operator is $\ell^p$ bounded for each $1 < p <\infty$:
 \begin{align}
    \sup_{\lambda \in \mathbb{T}} |\sum_{n \neq 0} f(x-n) w(n) c |n|^{1-1/c} \mathbf{1}_{\mathbb{N}_c}(|n|) \cdot \frac{e(\lambda n)}{n} | 
 \end{align}
\end{lemma}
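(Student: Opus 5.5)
The plan is to write the Piatetski-Shapiro weight as the prime weight minus a uniformly small error, and to handle the error scale by scale. Since
\[
w(n)\, c|n|^{1-1/c}\mathbf 1_{\mathbb N_c}(|n|) = w(n) - W_c(n),
\]
the operator in Lemma \ref{l:Pscar} is pointwise dominated by $\mathcal C_{\mathbb P} f$ plus
\[
\sup_\lambda \Big|\sum_{n\neq0} f(x-n)W_c(n)\frac{e(\lambda n)}{n}\Big|.
\]
The first term is bounded on every $\ell^p$, $1<p<\infty$, by Theorem \ref{t:carmax}. For the second term I would decompose $1/n=\sum_{k\geq1}\psi_k(n)$ and bound the supremum by
\[
\sum_k \sup_\lambda \Big|\sum_n f(x-n)W_c(n)\psi_k(n)e(\lambda n)\Big|.
\]
Each piece is a single-scale maximally modulated average at scale $2^k$, so it should then suffice to prove summable bounds on each scale.

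On $\ell^2$, I would take the quoted input from \cite{FKT1}, namely the decay $N^{-\epsilon}$ for $\sup_\lambda|\mathbb E_{n\in[N]} f(x-n)W_c(n)e(\lambda n)|$. This has to be upgraded from sharp averages to the smooth odd cutoff $\psi_k$ on both signs of $n$. For this I would use the same device as at the end of the proof of Lemma \ref{e:U3maxmod}. Write $\psi_k$ through its Fourier integral (or via summation by parts in $n$) as an $L^1$-superposition of modulated sharp averages over $[2^{k}]$ and $[-2^{k},0)$. The extra modulations are absorbed into the supremum over $\lambda$ because the family is modulation invariant. This gives the $\ell^2$ bound $\lesssim 2^{-\epsilon k}\|f\|_{\ell^2}$ for the $k$th piece. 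If the \cite{FKT1} bound is only available for the one-sided average, I would apply it to the reflected sequence $f(-\cdot)$ for the negative half, since $W_c$ is even.

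For $p\neq2$ I would use a crude bound and interpolate. Pointwise, the $k$th piece is at most $2^{-k}\sum_{|n|\approx2^k}|f(x-n)|\,|W_c(n)|$. Since $|W_c(n)|\lesssim k\,2^{k(1-1/c)}$ on the support, this gives
\[
\lesssim k\,2^{k(1-1/c)} M_{\mathrm{HL}}f(x).
\]
That bound grows in $k$, so interpolation against the $\ell^2$ gain would only cover $p$ near $2$. To reach every $1<p<\infty$ I would instead exploit positivity: $|W_c|\leq w + w\,c|n|^{1-1/c}\mathbf 1_{\mathbb N_c}(|n|)$, and both of these have bounded averages on dyadic shells. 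For the Piatetski-Shapiro primes this is a Brun--Titchmarsh type upper bound, valid for $c<7/6$ and in fact for a wider range. Then the crude bound becomes
\[
\sup_\lambda|\cdot|\leq 2^{-k}\sum_{|n|\approx 2^k}|f(x-n)|\,|W_c(n)|,
\]
and on $\ell^{s}$ for all $1\leq s\leq\infty$ this is uniformly bounded in $k$, since convolution with a measure of bounded mass is bounded on every $\ell^s$ by Young's inequality. Riesz--Thorin interpolation between $2^{-\epsilon k}$ on $\ell^2$ and $O(1)$ on $\ell^s$, with $s$ near $1$ or near $\infty$, gives $2^{-\epsilon\theta k}$ on $\ell^p$, and the sum over $k$ converges.

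The main obstacle I anticipate is the upgrade from the sharp-average statement in \cite{FKT1} to the smooth two-sided cutoffs $\psi_k$ while keeping the power gain. The $L^1$ norm of the Fourier representation of $\psi$ is $O(1)$ at each scale after rescaling, so I expect no loss, but the reflection and the handling of both signs must be checked. A secondary point is that the shell mass bound for $w\,c|n|^{1-1/c}\mathbf 1_{\mathbb N_c}$ requires the upper-bound sieve input in the stated range of $c$, which I would take from \cite{FKT1}.
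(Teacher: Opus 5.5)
Your proposal is correct and follows the paper's proof. You reduce to the $W_c$-weighted operator via Theorem \ref{t:carmax}, split into the scales $\psi_k$, get a $2^{-\epsilon k}$ gain on $\ell^2$ from the \cite{FKT1} input, and interpolate against single-scale $\ell^s$ bounds that are uniform in $k$. The differences are cosmetic: the paper gets the $\ell^2$ gain through $\|W_c\|_{U^3([-2^k,2^k])}$ and the smooth-cutoff part of Lemma \ref{e:U3maxmod} rather than your Fourier superposition of sharp averages, and your positivity/shell-mass argument spells out what the paper compresses into ``by convexity''. That shell bound for $w\,c|n|^{1-1/c}\mathbf 1_{\mathbb N_c}(|n|)$ also follows from the quoted \cite{FKT1} estimate itself, tested at $\lambda=0$ on $f=\mathbf 1_{[-N,N]}$, together with Chebyshev's bound.
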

\begin{proof}
By the boundedness of the Carleson operator along the primes, it suffices to bound
     \begin{align}
    \sup_{\lambda \in \mathbb{T}} |\sum_{n \neq 0} f(x-n) W_c(n) \frac{e(\lambda n)}{n} |. 
 \end{align}
    
We bound
    \begin{align}
\| \sup_{\lambda \in \mathbb{T}} |\sum_n  f(x-n) W_c(n) \psi_k(n) e(\lambda n)| \|_{\ell^2} \lesssim \|W_c\|_{U^3([-2^k,2^k])} \|f \|_{\ell^2} \lesssim 2^{-\epsilon k } \|f\|_{\ell^2};
    \end{align}
since the single scale estimate is bounded on $\ell^p$ by convexity, we interpolate to conclude. 
\end{proof}
To adapt this argument to the variational setting, it suffices to note that 
\begin{align}
    \| W_c \|_{L^2([-2^k,2^k])} \lesssim 2^{k/2(1-1/c)}
\end{align}
and then to argue by interpolation as in \S \ref{s:red}. 

\section{The Discrete Carleson Operator}\label{a:disc}
We here provide a brief proof of the following discrete analogue.
\begin{proposition}[Discrete Carleson's Theorem]\label{p:disc}
For each $r > 2$, and $r' < p < \infty$, the following bound holds:
    \begin{align}
        \| \mathcal{V}^r\big( \sum_{n \neq 0 } f(x-n) \frac{e(\lambda n)}{n} : \lambda \in \mathbb{T} \big) \|_{\ell^p} \lesssim_p \frac{r}{r-2} \| f\|_{\ell^p}
    \end{align}
\end{proposition}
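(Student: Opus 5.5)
The plan is to transfer the continuous variational Carleson theorem (Theorem \ref{t:Var}, in the quantitative form \eqref{e:O+} and its $L^p$ analogue from \cite{O+}) to $\mathbb{Z}$. We split the kernel $1/n$ into a smooth large-scale part plus a compactly supported remainder. Fix an even Schwartz function $\chi$ with $\chi(0)=1$ whose Fourier transform $\widehat\chi$ is supported in $[-1/10,1/10]$. Decompose
\[
\frac{1}{n}\mathbf 1_{n\neq0}=k(n)+\Big(\frac{1}{n}\mathbf 1_{n\neq 0}-k(n)\Big),
\]
where $k$ is the restriction to $\mathbb{Z}$ of the Euclidean kernel $\mathrm{p.v.}\,\frac1t$ convolved with a smooth compactly supported bump, with frequencies restricted accordingly. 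The remainder $E(n)$ then decays like $|n|^{-2}$, so $E\in\ell^1$ with $\sum|E(n)|<\infty$ and in fact $\sum_n |n|^{1/r}|E(n)|<\infty$.

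\textbf{Error term.} For the error piece we argue directly. Pointwise, by the scalar estimate \eqref{e:embed} on dyadic shells $|n|\approx 2^j$ (as in Lemma \ref{l:crude-lambda-var-scale}), the $r$-variation in $\lambda$ of $\sum_n f(x-n)E(n)e(\lambda n)$ is bounded by
\[
\sum_j 2^{j/r}\Big(\sum_{|n|\approx 2^j}|E(n)f(x-n)|^{r'}\Big)^{1/r'}.
\]
Taking $\ell^p$ norms, for $p\geq r'$ Minkowski gives a bound $\sum_j 2^{j/r}2^{-2j}2^{j/r'}\|f\|_p\lesssim\|f\|_p$. For $r'<p<r'$ there is nothing to do, since the range is $p>r'$. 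So the error is harmless for all $p>r'$.

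\textbf{Main term.} For the main term, $\widehat{k}$ on $\mathbb T$ is a smoothed version of $-\pi i\,\mathrm{sgn}(\xi)$ near the origin. Writing $\widehat{k}(\xi)=c\,\mathbf 1_{(-\infty,0]}(\xi)+(\text{smooth bounded-variation part})$ localized to $|\xi|\le 1/10$ and periodized, we see it is a function of bounded variation supported in a short interval modulo smooth corrections. Since $\lambda$ ranges over $\mathbb T$, we cover $\mathbb T$ by $O(1)$ intervals of length $1/10$ and use \eqref{e:split}; the endpoint terms are single modulated operators, which are bounded on $\ell^p$ by the discrete Hilbert transform/Carleson maximal bound. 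On each piece, Lemma \ref{l:varcar} reduces matters to the variational Carleson operator with sharp cutoff multiplier $\mathbf 1_{(-\infty,0]}(\xi-\lambda)$, with frequencies restricted to an interval of length $<1/2$. Magyar--Stein--Wainger transference \cite{MSW} (more precisely, its vector-valued/variational form, valid since $\mathcal V^r$ is a Banach-lattice norm and the multiplier family is supported in a fixed small interval) transfers the $L^p(\mathbb R)$ bound of Theorem \ref{t:Var} to $\ell^p(\mathbb Z)$ for $p>r'$, with the constant $\frac{r}{r-2}$ tracked from \eqref{e:O+} and its $L^p$ version.

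\textbf{Main obstacle.} The main obstacle is justifying the transference for a supremum over uncountably many $\lambda$ with a variational norm on $\ell^p$, $p\neq 2$. I would first restrict $\lambda$ to finite sets $T\subset I$ by continuity and monotone convergence, as in the proof of Lemma \ref{l:varcarZ}. I would then apply the MSW theorem to the $\ell^{r}(\text{jumps})$-valued operator obtained by linearizing the variation, via the duality used in Lemma \ref{l:varcar}, with bounds uniform in $T$.
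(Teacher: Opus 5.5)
Your argument is correct in substance, but it localizes on the opposite side from the paper.

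\textbf{The paper's route.} It first truncates to $\sum_{1\le k\le J}\psi_k$ (Hardy--Littlewood maximal function plus Fatou). It then splits the \emph{input} $f=\sum_{|l|\le 2^{100}}f_l$ into boundedly many frequency pieces and uses modulation invariance to reduce to $l=0$. After restricting $\lambda$ to short intervals via \eqref{e:split}, it uses Poisson summation to see that on the relevant support the periodic multiplier is a single translate $M(v+\beta-\lambda)$ of the Euclidean truncated Hilbert multiplier. Magyar--Stein--Wainger then applies.

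\textbf{Your route.} You localize the \emph{kernel} instead. With $\widehat{k}$ of bounded variation and supported in $|\xi|\le 1/10$, the family $\widehat{k}(\cdot-\lambda)$, for $\lambda$ in an interval of length $1/10$, is automatically supported in an interval of length $3/10$. So Lemma \ref{l:varcar}, Theorem \ref{t:Var} and transference apply directly, with no splitting of $f$, no truncation and no Poisson summation. The price is the error kernel $E$. Your shell-by-shell use of \eqref{e:embed} handles it correctly: the sum is $\sum_j 2^{j/r}2^{-2j}2^{j/r'}=\sum_j 2^{-j}$, and Minkowski needs exactly $p\ge r'$.

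\textbf{What each buys.} The paper treats the kernel exactly and carries over verbatim to the kernels of Corollary \ref{c:disc}. Yours is more direct. In fact even your error term is avoidable: the discrete multiplier $\sum_{n\neq0}e(-n\xi)/n$ is already a sawtooth of bounded variation on $\mathbb{T}$, and a smooth partition of unity on $\mathbb{T}$ cuts it into finitely many short-supported BV pieces.

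\textbf{Two points need tightening.}

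First, the split as written is inconsistent. A bump cannot be compactly supported in both space and frequency. Moreover, the normalization must be $\widehat{\chi}(0)=\int\chi=1$, not $\chi(0)=1$, so that $\widehat{k}$ carries the full jump of the sawtooth at $0$. Otherwise $\widehat{E}$ keeps a jump and $E\notin\ell^1$, and your error estimate fails. Setting $\widehat{k}(\xi):=\eta(\xi)\sum_{n\neq0}e(-n\xi)/n$, with $\eta\equiv1$ near $0$ and $\eta$ supported in $|\xi|\le1/10$, does what you want and makes $E$ rapidly decaying.

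Second, \cite{MSW} should be applied to the translation-invariant map $f\mapsto\big(\sum_n f(x-n)k(n)e(\lambda n)\big)_{\lambda\in T}$, valued in the finite-dimensional space $\mathcal{V}^r(T)$. Finite dimensionality is all the theorem needs; $\mathcal{V}^r$ is not a lattice norm. Do not apply it to an $x$-dependent linearization of the variation, which is no longer a multiplier operator.
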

\begin{proof}
Let $\{ \psi_j \}$ be as above; by an argument with the Hardy-Littlewood maximal function and Fatou's lemma, there is no loss of generality in reducing to
    \begin{align}
        \| \mathcal{V}^r \big( \sum_n  f(x-n) \sum_{1 \leq k \leq J} \psi_k(n) e(\lambda n) : \lambda \in \mathbb{T} \big) \|_{\ell^p} \lesssim_p \frac{r}{r-2} \| f\|_{\ell^p},
    \end{align}
provided our estimates are independent of $J$. 

Now, with $\eta$ a Schwartz function with
    \[ \mathbf{1}_{[-1/4,1/4]} \leq \eta \leq \mathbf{1}_{[-1/2,1/2]}\]
define
\begin{align}
    \widehat{f_l}(\beta) := (1 - |\beta|)_+ \cdot \eta(2^{10} \beta -l) \cdot \widehat{f}(\beta),
\end{align}
so that
\begin{align}
    \sum_{|l| \leq 2^{100}} f_l \equiv f
\end{align}
as sequence-space functions, by an argument with the F\'{e}jer kernel. And,
\begin{align}
    \sup_l \| f_l \|_{\ell^p} \lesssim \| f \|_{\ell^p}
\end{align}
by convexity. It therefore suffices to establish Proposition \ref{p:disc} for each $f_l$ individually; since the problem is modulation invariant, we may reduce to the $l=0$ case. By \eqref{e:split} and the triangle inequality, we may assume that $\frac{r}{100} \leq \lambda < \frac{r+1}{100}$ for some $0 \leq r < 100$.

If we pass to Fourier space, and apply Poisson summation, we can express
\begin{align}
    &\sum_{1 \leq k \leq J} f_0(x-n) \psi_k(n) e(\lambda n)  \\
    & = \int \sum_v M( v + (\beta - \lambda)) (1 - |\beta|)_+ \eta(2^{100} \beta) \widehat{f}(-\beta) e(\beta x) \ d\beta,
\end{align}
where
\begin{align}
    M(t) := \sum_{1 \leq k \leq J} \widehat{\psi_k}(t).
\end{align}
And, since we have restricted $\beta,\lambda$, we see that the sum over $v$ collapses to a single term, with $v \in \{0, \pm 1\}$ according to the particular value of $r$. So, our task becomes estimating
\begin{align}
   \mathcal{V}^r\big( \int M_{v}(\beta, \lambda) \widehat{f}(-\beta) e(\beta x) \ d\beta :  \frac{r}{100} \leq \lambda < \frac{r+1}{100} \big)
\end{align}
where
\begin{align}
   M_{v}(\beta,\lambda) := M( v + \beta - \lambda ) (1 - |\beta|)_+ \eta(2^{100} \beta), \; \; \; v \in \{0, \pm 1\}.
\end{align}
Since $f$ is finitely supported, by continuity we may restrict the above supremum in $\lambda$ to a countable set, so by monotone convergence, it suffices to prove
\begin{align}
   \|   \mathcal{V}^r\big( \int M_{v}(\beta, \lambda) \widehat{f}(-\beta) e(\beta x) \ d\beta : \lambda \in T \big) \|_{\ell^p} \lesssim_p \frac{r}{r-2} \| f\|_{\ell^p},
\end{align}
uniformly over finite subsets $T \subset [\frac{r}{100},\frac{r+1}{100})$. But this just follows from Magyar-Stein-Wainger transference \cite{MSW}.
\end{proof}

We remark that the same argument applies whenever the Hilbert kernel, $\frac{1}{t}$, is replaced by any Calder\'{o}n-Zygmund kernel whose Fourier transform has bounded variation, and a periodic formulation exists as well.

\begin{cor}\label{c:disc}
Let $Q \geq 1$ be an integer, and let $K$ be a Calder\'{o}n-Zygmund kernel with $\| \widehat{K} \|_{\text{BV}} = c_0 < \infty$.
Then for each $r > 2, \ r' < p < \infty$, the following bound holds:
    \begin{align}
        \| \mathcal{V}^r \big( \sum_{n \neq 0 } f(x-Q n) K(n) e(\lambda n) : \lambda \in \mathbb{T} \big) \|_{\ell^p} \lesssim_{p} c_0 \cdot \frac{r}{r-2} \| f\|_{\ell^p},
    \end{align}
    with the implicit constant independent of $Q$.
\end{cor}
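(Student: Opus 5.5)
The plan is to reduce Corollary \ref{c:disc} to Proposition \ref{p:disc}, in its general Calder\'on--Zygmund form noted in the remark preceding the corollary, by splitting into residue classes modulo $Q$.

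\textbf{Step 1: fibering.} Write each $x\in\mathbb Z$ uniquely as $x = a + Qm$ with $a\in\{0,\dots,Q-1\}$, and set $f_a(m):=f(a+Qm)$. Then
\[
\sum_{n\neq0} f(a+Qm-Qn)K(n)e(\lambda n)=\sum_{n\neq0} f_a(m-n)K(n)e(\lambda n).
\]
So for each fixed $a$, the $\lambda$-family at the point $x=a+Qm$ is exactly the unit-step modulated operator $T^K_\lambda f_a(m)$. The variation is taken pointwise in $x$, and $\lambda$ does not interact with $a$, so
\[
\mathcal V^r\big(\cdots\big)(a+Qm)=\mathcal V^r\big(T^K_\lambda f_a(m):\lambda\in\mathbb T\big).
\]

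\textbf{Step 2: the unit-step estimate.} We need
\[
\|\mathcal V^r(T^K_\lambda h:\lambda)\|_{\ell^p}\lesssim_p c_0\tfrac{r}{r-2}\|h\|_{\ell^p}\qquad (r'<p<\infty).
\]
This follows from the proof of Proposition \ref{p:disc} with $\frac1t$ replaced by $K$:
\begin{itemize}
\item Decompose $K=\sum_k K\psi$-type dyadic pieces.
\item Restrict $\widehat h$ to $2^{-10}$-frequency windows. This is harmless by modulation invariance and convexity.
\item Restrict $\lambda$ to intervals of length $1/100$ via \eqref{e:split}, and use Poisson summation to collapse the periodization.
\item Transfer the resulting real-line multiplier family $\widehat K(v+\beta-\lambda)$ via Magyar--Stein--Wainger \cite{MSW} to the real-variable variational estimate.
\item On $\mathbb R$, Lemma \ref{l:varcar} applied with $dm=d\widehat K$ gives the bound $c_0$ times the Oberlin et al.\ variational operator \eqref{e:O+}, in its $L^p$ form (Theorem \ref{t:Var}, $p>r'$).
\end{itemize}

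\textbf{Step 3: summing the fibers.} Take $p$-th powers and sum over $a$:
\[
\sum_a\|\mathcal V^r(T^K_\lambda f_a)\|_{\ell^p}^p\lesssim (c_0\tfrac{r}{r-2})^p\sum_a\|f_a\|_{\ell^p}^p=(c_0\tfrac{r}{r-2})^p\|f\|_{\ell^p}^p.
\]
This is visibly independent of $Q$.

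\textbf{Main obstacle.} Step 1 and Step 3 are routine. The only substantive point is Step 2, namely that Proposition \ref{p:disc} holds for general $K$ with constant linear in $c_0=\|\widehat K\|_{\mathrm{BV}}$. There are two things to check. First, the Calder\'on--Zygmund hypothesis is what lets us pass to truncations and take limits (the Hardy--Littlewood/Fatou step). Second, the transference constant does not depend on $K$ beyond $c_0$. I would handle the dependence on $c_0$ through Lemma \ref{l:varcar}, because it expresses each multiplier difference as an average of sharp frequency cutoffs against $d\widehat K$. Hence the $\ell^p$ bound for the half-line variational operator is inherited with exactly the factor $\|d\widehat K\|_{\mathrm{TV}}$.
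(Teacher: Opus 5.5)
Your proposal is correct and follows essentially the route the paper intends. The paper gives no separate proof of Corollary \ref{c:disc}; it only remarks that the argument of Proposition \ref{p:disc} goes through for Calder\'on--Zygmund kernels with BV Fourier transform, with Lemma \ref{l:varcar} supplying the linear dependence on $c_0$, and that a periodic formulation exists. Your fibering over residue classes modulo $Q$ turns each fiber into the unit-step operator applied to $f_a(m)=f(a+Qm)$, and summing $p$-th powers over $a$ then makes the $Q$-independence explicit.
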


\end{document}